\newtheorem{theorem}{Theorem}[section]
\newtheorem{proposition}[theorem]{Proposition}
\newtheorem{lemma}[theorem]{Lemma}
\newtheorem{claim}[theorem]{Claim}
\newtheorem*{claim*}{Claim}
\newtheorem{corollary}[theorem]{Corollary}
\newtheorem{Main Conjecture}[theorem]{Main Conjecture}
\newtheorem{conjecture}[theorem]{Conjecture}
\newtheorem{problem}[theorem]{Problem}
\theoremstyle{remark}
\newtheorem{example}[theorem]{Example}
\newtheorem{Remark}[theorem]{Remark}
\newtheorem{remark}[theorem]{Remark}
\theoremstyle{plain}
\newtheorem{question}[theorem]{Question}
\newcommand\FA{{\mathfrak{D}}} 
\newcommand{\FAring}{\mathfrak{P}}
\newcommand\G{{\sf G}}
\newcommand{\cellsizeL}{19}
\newcommand{\cellsizeS}{14}
\newlength{\cellszL} \setlength{\cellszL}{\cellsizeL\unitlength}
\newsavebox{\cellL}
\sbox{\cellL}{\begin{picture}(\cellsizeL,\cellsizeL)
\put(0,0){\line(1,0){\cellsizeL}}
\put(0,0){\line(0,1){\cellsizeL}}
\put(\cellsizeL,0){\line(0,1){\cellsizeL}}
\put(0,\cellsizeL){\line(1,0){\cellsizeL}}
\end{picture}}
\newcommand\cellifyL[1]{\def\thearg{#1}\def\nothing{}%
\ifx\thearg\nothing
\vrule width0pt height\cellszL depth0pt\else
\hbox to 0pt{\usebox{\cellL} \hss}\fi%
\vbox to \cellszL{
\vss
\hbox to \cellszL{\hss$#1$\hss}
\vss}}
\newcommand\tableauL[1]{\vtop{\let\\\cr
\baselineskip -16000pt \lineskiplimit 16000pt \lineskip 0pt
\ialign{&\cellifyL{##}\cr#1\crcr}}}
\newlength{\cellszS} \setlength{\cellszS}{\cellsizeS\unitlength}
\newsavebox{\cellS}
\sbox{\cellS}{\begin{picture}(\cellsizeS,\cellsizeS)
\put(0,0){\line(1,0){\cellsizeS}}
\put(0,0){\line(0,1){\cellsizeS}}
\put(\cellsizeS,0){\line(0,1){\cellsizeS}}
\put(0,\cellsizeS){\line(1,0){\cellsizeS}}
\end{picture}}
\newcommand\cellifyS[1]{\def\thearg{#1}\def\nothing{}%
\ifx\thearg\nothing
\vrule width0pt height\cellszS depth0pt\else
\hbox to 0pt{\usebox{\cellS} \hss}\fi%
\vbox to \cellszS{
\vss
\hbox to \cellszS{\hss$#1$\hss}
\vss}}
\newcommand\tableauS[1]{\vtop{\let\\\cr
\baselineskip -16000pt \lineskiplimit 16000pt \lineskip 0pt
\ialign{&\cellifyS{##}\cr#1\crcr}}}
\newcommand{\eqssyt}{{\sf EqSSYT}}
\newcommand{\dist}{{\sf dist}}
\newcommand{\xbox}{x}
\newcommand{\factor}{{\sf Eballotfactor}}
\newcommand{\itop}{\tau}  
\newcommand{\excise}[1]{}
\title{Equivariant cohomology, Schubert calculus,\\ and Edge labeled tableaux}
\author{Colleen Robichaux}
\author{Harshit Yadav}
\author{Alexander Yong}
\address{Dept.~of Mathematics, University of Illinois at Urbana-Champaign, Urbana, IL 61801}
\email{cer2@illinois.edu, yadav7@illinois.edu, ayong@illinois.edu}
\begin{document}
\pagestyle{plain}

\mbox{}

\date{August 29, 2019}

\begin{abstract}
This chapter concerns edge labeled Young tableaux, introduced by H.~Thomas and the third author. It is
used to model equivariant Schubert calculus of Grassmannians.
We survey results, problems, conjectures, together with their influences from combinatorics, algebraic and symplectic geometry, linear algebra,
and computational complexity. We report on a new shifted analogue of edge labeled tableaux. Conjecturally, this gives 
a Littlewood-Richardson rule for the structure constants of the D.~Anderson-W.~Fulton ring, which is related to the equivariant cohomology of isotropic Grassmannians. 
\end{abstract}

\maketitle

\begin{center}
\emph{To William Fulton on his eightieth birthday, for inspiring generations.} 
\end{center}

\section{Introduction}\label{sec:intro}

\subsection{Purpose}\label{sec:purpose}
Singular cohomology is a functor between the categories
\[\{\text{topological spaces},\text{continuous maps}\} \to 
\{\text{graded algebras}, \text{homomorphisms}\}.\]
The cohomology functor links the geometry of Grassmannians to symmetric functions and Young tableaux. However,
this does not take into account the large torus action on the Grassmannian.
A similar functor for topological spaces with continuous group actions is equivariant cohomology. 

\begin{quotation}
\emph{What are equivariant analogues for these centerpieces of algebraic combinatorics?}
\end{quotation}

We posit a comprehensive answer, with applications, and future perspectives. 

\subsection{Schubert calculus}\label{sec:1.2} Let $X={\sf Gr}_k({\mathbb C}^n)$ be the Grassmannian of $k$-dimensional planes in ${\mathbb C}^n$. The group ${\sf GL}_n$
of invertible $n\times n$ matrices acts transitively on $X$ by change of basis. Let ${\sf B}_{-}\subset {\sf GL}_n$ be a opposite Borel subgroup of lower triangular
matrices. ${\sf B}_{-}$ acts on $X$ with finitely many orbits $X_{\lambda}^{\circ}$ where
$\lambda$ is a partition (identified with its Young diagram, in English notation) that is contained in the $k\times (n-k)$ rectangle $\Lambda$.
These \emph{Schubert cells} satisfy  $X_{\lambda}^{\circ}\cong {\mathbb C}^{k(n-k)-|\lambda|}$ where
$|\lambda|=\sum_i {\lambda_i}$. Their closures, the \emph{Schubert varieties}, satisfy 
\[X_{\lambda}:=\overline{X_{\lambda}^{\circ}}=\coprod_{\mu\supseteq \lambda}X_{\mu}^{\circ}.\]
Let $\nu^{\vee}$ be the $180^\circ$-rotation of $k\times (n-k)\setminus \nu$.
Suppose $|\lambda|+|\mu|+|\nu^{\vee}|=k(n-k)=\dim X$. By Kleiman transversality \cite{Kleiman}, there is a dense open 
${\mathcal O}\subset {\sf GL}_n\times {\sf GL}_n \times {\sf GL}_n$ such that
\[c_{\lambda,\mu}^{\nu}:=\#\{g_1 \cdot X_{\lambda} \cap g_2 \cdot X_{\mu}\cap g_3\cdot X_{\nu^{\vee}}\}\in {\mathbb Z}_{\geq 0}\]
is independent of $(g_1,g_2,g_3)\in {\mathcal O}$. Each $c_{\lambda,\mu}^{\nu}$ is called a \emph{Littlewood-Richardson coefficient}. Modern Schubert calculus is concerned with these coefficients, as well as their generalizations/analogues (from varying the space $X$ or cohomology theory).

Let $\sigma_{\lambda}\in H^{2|\lambda|}(X)$ be the Poincar\'e dual to $X_{\lambda}$. These \emph{Schubert
classes} form a ${\mathbb Z}$-linear basis of $H^{*}(X)$ and
\[\sigma_{\lambda}\smallsmile \sigma_{\mu}=\sum_{\nu\subseteq \Lambda} c_{\lambda,\mu}^{\nu}\sigma_{\nu}.\]

The \emph{Schur function} $s_{\lambda}$ is the generating
series
$s_{\lambda}=\sum_{T}x^T$
for semistandard Young tableaux of shape $\lambda$, \emph{i.e.}, row weakly increasing and column strictly increasing fillings of $\lambda$ with elements of ${\mathbb N}$.
The weight of $T$ is $x^T:=\prod_i x_i^{\#i\in T}$. For example, if $\lambda=(2,1)$, the semistandard tableaux
are
\[\tableauS{1&1\\2} \ \ \ \ \tableauS{1&2\\2} \ \ \ \  \tableauS{1&3\\2} \ \ \ \   \tableauS{1&1\\3} \ \ \ \  \tableauS{1&2\\3}\ \ \ \  \tableauS{1&3\\3}\ \ \ \  \tableauS{2&2\\3}\ \ \ \  \tableauS{2&3\\3} \ \ \ \ \cdots \] 
Hence $s_{(2,1)}=x_1^2 x_2 +x_1 x_2^2 +x_1 x_2 x_3 +x_1^2 x_3 +x_1 x_2 x_3 +x_1 x_3^2 +x_2^2 x_3 +x_2 x_3^2+\cdots$. 
Schur functions form a ${\mathbb Z}$-linear basis of ${\sf Sym}$, the ring of symmetric functions in infinitely many variables. 

The map $\sigma_{\lambda}\mapsto s_{\lambda}$ induces a ring isomorphism 
\[H^{*}(X)\cong {\sf Sym}/I\]
where $I$ is the ideal $\langle s_{\lambda}:\lambda\not\subseteq \Lambda\rangle$. Therefore in ${\sf Sym}$,
\begin{equation}
\label{eqn:Schurprod}
s_{\lambda}\cdot s_{\mu}=\sum_{\nu} c_{\lambda,\mu}^{\nu} s_{\nu}.
\end{equation}
To compute Schubert calculus of $X$, it suffices to determine (\ref{eqn:Schurprod}) by working with Schur \emph{polynomials} in only finitely many variables $x_1,\ldots,x_k$. Better yet,
the \emph{jeu de taquin} theory of Young tableaux, introduced by M.-P.~Sch\"utzenberger \cite{Schutzenberger} gives a combinatorial rule for
computing $c_{\lambda,\mu}^{\nu}$; this is summarized in Section~\ref{sec:Young}. 
What we have discussed thus far constitutes textbook material on Schubert calculus; see, \emph{e.g.}, W.~Fulton's \cite{Fulton:YT}.

\subsection{Overview} This chapter describes an equivariant analogue of M.-P.~Sch\"utzenberger's theory, due to
 H.~Thomas and the third author; in short, one replaces Young tableaux with \emph{edge labeled tableaux}. Now, we hasten to offer an \emph{apologia}: such tableaux are not the only combinatorial model to compute equivariant Schubert calculus. For example, one has work of A.~Molev-B.~Sagan \cite{Molev.Sagan:99} and
the \emph{puzzles} of A.~Knutson-T.~Tao \cite{Knutson.Tao}. The latter  has had important recent followup, see, \emph{e.g.}, A.~Knutson-P.~Zinn-Justin's \cite{KnutsonZinn} and the references
therein. One also has the tableaux of V.~Kreiman \cite{Kreiman} or A.~Molev \cite{Molev}. 

That said, we wish to argue how the edge labeled model is a handy and flexible viewpoint. It
has been applied to obtain equivariant analogues of a number of theorems (delineated in Section~\ref{sec:Young} and~\ref{sec:Horn}). 
Another application, due to O.~Pechenik and the third author \cite{Pechenik.Yong}, is to Schubert calculus for the 
equivariant $K$-theory of $X$. Translating the combinatorics into puzzle language allowed for a proof (of a correction) of a conjecture of A.~Knutson-R.~Vakil about the same structure constants \cite{Pechenik.Yong:compositio}. However, as we wish to restrict to equivariant cohomology proper, this direction is not part of our discussion.

There is an important frontier to cross, that is, 
the still unsolved problem of finding a combinatorial rule for equivariant Schubert calculus of maximal orthogonal and
Lagrangian Grassmannians. The non-equivariant story is explained in Section~\ref{sec:PQstuff}. We explain the problem in Section~\ref{sec:YZ} together with some recent developments of C.~Monical \cite{Monical} and of the authors \cite{RYY}. The latter work shows that the combinatorial problems concerning the two spaces are essentially equivalent. 

This brings us to the principal new announcement of this work (Section~\ref{sec:shiftededge}):
the notion of \emph{shifted edge labeled tableaux}. We define an analogue of \emph{jeu de taquin} and use this to conjecturally define an associative ring (which we prove to also be commutative). The introduction of this ring is stimulated by recent work of D.~Anderson-W.~Fulton (see Section~\ref{sec:AndersonFultonstuff}) who define a ring connected to the equivariant cohomology of Lagrangian Grassmannians. Conjecturally, the two rings are isomorphic. This provides our strongest evidence to date of the applicability
of the edge labeled approach to the aforementioned open problem; we know of no similar results using other combinatorial models.

As this work is partially expository and partly an announcement, we limited the number of complete proofs in order to keep the focus on the
high-level research objectives. Where possible, we have sketched arguments (with references) and indicated those results which may be taken as
an exercise for the interested reader. These exercises are warmups for the conjectures and open problems contained herein.

\section{Equivariant cohomology of Grassmannians}\label{sec:Equiv}

\subsection{Generalities}\label{sec:Equivgen} We recall some general notions about equivariant cohomology. References that we consulted are L.~Tu's synopsis \cite{Tu}, J.~Tymoczko's exposition \cite{Tymoczko.eq} and A.~Knutson-T.~Tao's \cite[Section~2]{Knutson.Tao}.

Let ${\mathcal M}$ be a topological space with the continuous action of a topological group $\G$. If $\G$ acts freely on 
${\mathcal M}$, then in fact the equivariant cohomology ring $H_{\G}^{*}({\mathcal M})$ is $H^{*}({\mathcal M}/\G)$, see \cite[Proposition 2.1]{Tymoczko.eq}.
However, in general the action is not free, and ${\mathcal M}/\G$ might be, \emph{e.g.}, non-Hausdorff.
 Borel's \emph{mixing space construction} introduces a contractible space
${\sf EG}$ on which $\G$ acts freely. Thus $\G$'s diagonal action on ${\sf EG}\times {\mathcal M}$ is free and 
\[H_{\G}^{*}({\mathcal M}):=H^{*}({\sf EG}\times {\mathcal M}/\G).\] 
The space ${\sf EG}$ is the total space
of the \emph{universal principle $\G$-bundle} $\pi:{\sf EG}\to {\sf BG}$ where ${\sf BG}={\sf EG}/{\sf G}$ is the classifying space of ${\sf G}$. Here, universality means that if $\rho:P\to {\mathcal M}$ is any $\G$-bundle, there exists a unique map
$f:{\mathcal M}\to {\sf BG}$ (up to homotopy) such that $P\cong f^*({\sf EG})$. By functoriality, the constant map $c:{\mathcal M}\to \{pt\}$ induces a homomorphism
$c^{*}:H^*_{\G}(pt)\to H^*_{\G}({\mathcal M})$. Hence $H^{*}_{\G}({\mathcal M})$ is a module over $H^{*}_{\G}(pt)$ by $\beta\cdot \kappa:=c^*(\beta)\kappa$ for
$\beta\in H^{*}_{\G}(pt)$ and $\kappa\in H^{*}_{\G}({\mathcal M})$. While ordinary (singular) cohomology of a point is $\mathbb Z$, 
$H_{\G}^{*}(pt)$ is \emph{big}. For instance, if $\G={\sf T}$ is an $n$ torus $(S^1)^n$, then $H_{{\sf T}}^{*}(pt)={\mathbb Z}[t_1,\ldots,t_n]$.
 
If we presume $\G$ is a algebraic group acting on a smooth algebraic variety $M$, 
these notions have versions in the algebraic category; see, \emph{e.g.}, D.~Anderson's \cite{Dave.eq}.

\subsection{The Grassmannian} 
Concretely, if $\lambda=(\lambda_1\geq \lambda_2\geq \ldots \geq \lambda_k\geq 0)$ then 
\begin{equation}
\label{eqn:concrete}
X_{\lambda}=\{V\in X| \dim(V\cap F^{n-k+i-\lambda_i})\geq i, 1\leq i\leq k\},
\end{equation}
where $F^d={\rm span}(e_n,e_{n-1},\ldots, e_{n-d+1})$ and $e_i$ is the $i$-th standard basis vector; see \cite[Section~9.4]{Fulton} for details.

Let ${\sf T}\subset {\sf GL}_n$ be the torus of invertible diagonal matrices. Then from (\ref{eqn:concrete}), $X_{\lambda}$
is  ${\sf T}$-stable. Therefore, $X_{\lambda}$ admits an equivariant Schubert class $\xi_{\lambda}$
in the ${\sf T}$-equivariant
cohomology ring $H^{*}_{\sf T}(X)$. By what we have recounted in Section~\ref{sec:Equivgen}, $H^{*}_{\sf T}(X)$ is a module over 
\begin{equation}
\label{eqn:HT}
H^{*}_{\sf T}(pt):={\mathbb Z}[t_1,t_2,\ldots,t_n].
\end{equation}
The equivariant
Schubert classes form a $H_{\sf T}^*(pt)$-module basis of $H_{\sf T}^*(X)$. Therefore,
\begin{equation}
\label{eqn:HTexp}
\xi_{\lambda}\cdot \xi_{\mu} =\sum_{\nu\subseteq \Lambda} C_{\lambda,\mu}^{\nu} \xi_{\nu},
\end{equation}
where $C_{\lambda,\mu}^{\nu}\in H_{\sf T}^{*}(pt)$. For more details about equivariant cohomology specific to flag varieties we point the reader to 
\cite{Dave.eq} 
and S.~Kumar's textbook \cite[Chapter~XI]{Kumar}.

Let $\beta_i:=t_i-t_{i+1}$. D.~Peterson conjectured, and W.~Graham \cite{Graham} proved\begin{theorem}[Equivariant positivity \cite{Graham}]\label{thm:Graham}
$C_{\lambda,\mu}^{\nu}\in {\mathbb Z}_{\geq 0}[\beta_1,\ldots,\beta_{n-1}]$.\footnote{Actually, W.~Graham proved that for any
generalized flag variety ${\sf H}/{\sf B}$, the equivariant Schubert structure constant is expressible as a nonnegative integer polynomial in the simple roots of the (complex, semisimple) Lie group ${\sf H}$.}

\end{theorem}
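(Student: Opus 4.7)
The plan is to follow the geometric strategy made famous by Graham: realize each $C_{\lambda,\mu}^{\nu}$ as the ${\sf T}$-equivariant class of a genuine ${\sf T}$-stable subvariety sitting inside an affine space on which ${\sf T}$ acts with weights confined to one half-space of the character lattice, then invoke a general positivity lemma for such classes. In our ${\sf GL}_n$ setting the relevant half-space is the cone generated by the negative simple roots $-\beta_1,\ldots,-\beta_{n-1}$, which is exactly what the stated form of the theorem demands.

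First I would convert the algebraic statement into a cycle-theoretic one. By equivariant Kleiman/Borel mixing, one can translate $X_{\lambda}$ and $X_{\mu}$ by generic elements of the unipotent radical $U_-$ so that, in $H^*_{\sf T}(X)$, the class $\xi_\lambda \cdot \xi_\mu$ is represented by a proper ${\sf T}$-stable intersection whose pushforward to the ${\sf T}$-fixed point $e_\nu$ computes $C_{\lambda,\mu}^{\nu}$. Then I would pass to a Bott--Samelson resolution $\pi:Z_{\underline w}\to X_\nu$ for a reduced word $\underline w$ of the Weyl group element corresponding to $\nu$. On a ${\sf T}$-invariant affine chart $\mathbb A^N\subset Z_{\underline w}$ around the preimage of $e_\nu$, the ${\sf T}$-weights on the coordinates are precisely negatives of positive roots in type $A_{n-1}$, hence lie in the $\mathbb Z_{\geq 0}$-cone spanned by $\{-\beta_1,\ldots,-\beta_{n-1}\}$. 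Pulling the geometric representative back along $\pi$ gives a ${\sf T}$-stable closed subvariety $Y\subset \mathbb A^N$ whose equivariant class, after projection formula bookkeeping, equals $C_{\lambda,\mu}^{\nu}$.

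Next I would prove the key lemma: if ${\sf T}$ acts linearly on $\mathbb A^N$ with weights $\chi_1,\ldots,\chi_N$ all lying in a strict open half-space of the character lattice, and $Y\subset \mathbb A^N$ is any ${\sf T}$-stable closed subvariety, then $[Y]_{\sf T}\in H^*_{\sf T}(pt)$ is a nonnegative integer polynomial in $-\chi_1,\ldots,-\chi_N$. I would argue by induction on the codimension of $Y$: choose a ${\sf T}$-invariant hyperplane $H=\{z_i=0\}$ transverse to $Y$, and exploit the long exact sequence in equivariant Borel--Moore homology of the pair $(Y, Y\cap H)$. The residual class of $Y\smallsetminus(Y\cap H)$ on the open affine complement is divisible by $(-\chi_i)$ times classes of smaller codimensional ${\sf T}$-subvarieties (coming from the irreducible components of $Y\cap H$), whose classes are nonnegative by induction. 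Combined with the fact that linear/irreducible ${\sf T}$-subvarieties have classes that are monomials in the $-\chi_i$, this closes the induction.

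The main obstacle I expect is the second step: choosing the Bott--Samelson chart carefully enough that the weights really do end up in the required half-space, and verifying that the equivariant pushforward along $\pi$ preserves positivity (not just nonnegativity). This is where the geometric choice of a reduced word, and the fact that Bott--Samelsons are iterated $\mathbb P^1$-bundles with explicit ${\sf T}$-characters on the tangent spaces, does the real work; one must check that the weights encountered are indeed negatives of positive roots rather than some mixture of positive and negative roots. The positivity lemma itself, while technical, is a clean induction; translating Schubert calculus into its hypotheses is the substantive geometric content of the argument.
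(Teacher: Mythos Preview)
The paper does not supply its own proof of this theorem: it is stated with attribution to Graham \cite{Graham} and immediately used, so there is nothing in the text to compare your proposal against. Your outline is essentially a sketch of Graham's original argument --- realize the structure constant as the equivariant class of a ${\sf T}$-stable subvariety inside an affine chart on a Bott--Samelson resolution whose tangent weights are negatives of positive roots, then apply a general positivity lemma for such classes.

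A few cautions about your sketch, since you asked for a review. The ``equivariant Kleiman'' step is not how Graham proceeds: rather than translating Schubert varieties by generic unipotent elements and intersecting, he works with the diagonal embedding of the flag variety and a degeneration argument that expresses the relevant cycle as an effective ${\sf B}_-$-invariant cycle; this avoids the genericity bookkeeping you allude to. Your inductive sketch of the positivity lemma is also somewhat loose --- the actual argument (Graham's Theorem~2.1) does not proceed by slicing with a transverse hyperplane but by a filtration/degeneration whose associated graded pieces are products of linear subspaces, each contributing a monomial in the $-\chi_i$. Finally, you correctly identify the delicate point: controlling the tangent weights on the Bott--Samelson chart. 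This is handled in \cite{Graham} by an explicit computation showing those weights are negatives of positive roots, and is indeed where the geometry does the work.
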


In fact, $\deg C_{\lambda,\mu}^{\nu} = |\lambda|+|\mu|-|\nu|$
and  $C_{\lambda,\mu}^{\nu}=0$ 
unless $|\lambda|+|\mu|\geq |\nu|$.  In the case $|\lambda|+|\mu|=|\nu|$,
$C_{\lambda,\mu}^{\nu}=c_{\lambda,\mu}^{\nu}$ is the Littlewood-Richardson coefficient of Section~\ref{sec:intro}.

Fix
a grid with $n$ rows and $m \geq n + \lambda_1 - 1$ columns. The \emph{initial diagram} places $\lambda$ in the northwest corner of this grid.
For example,
if $\lambda=(3,2,0,0)$, the
initial diagram for $\lambda$ is the first
of the three below.
\[\left[\begin{matrix}
+ & + & + & \cdot & \cdot & \cdot\\
+ & + & \cdot & \cdot & \cdot & \cdot \\
\cdot & \cdot & \cdot & \cdot & \cdot & \cdot \\
\cdot & \cdot & \cdot & \cdot & \cdot & \cdot
\end{matrix}\right] \ \ \ 
\left[\begin{matrix}
+ & + & \cdot & \cdot & \cdot & \cdot\\
+ & \cdot & \cdot & + & \cdot & \cdot \\
\cdot & \cdot & \cdot & \cdot & \cdot & \cdot \\
\cdot & \cdot & \cdot & + & \cdot & \cdot
\end{matrix}\right] \ \ \ 
\left[\begin{matrix}
+ & \cdot & \cdot & \cdot & \cdot & \cdot\\
\cdot & \cdot & + & \cdot & \cdot & \cdot \\
\cdot & + & \cdot & \cdot & + & \cdot \\
\cdot & \cdot & \cdot & + & \cdot & \cdot
\end{matrix}\right]
\]
A \emph{local move} is a change of any $2\times 2$ subsquare of the form
\[
\begin{matrix}
+ & \cdot\\
\cdot & \cdot
\end{matrix}\ \ \  \ \ 
\mapsto  \ \ \ \ \ 
\begin{matrix}
\cdot & \cdot\\
\cdot & +
\end{matrix}\]
A \emph{plus diagram} is any configuration of $+$'s in the grid resulting from some number of local moves starting from the initial diagram for $\lambda$. We have given two more examples of plus diagrams for $\lambda=(3,2,0,0)$.

Let ${\sf Plus}(\lambda)$ denote the set of plus diagrams for $\lambda$. If $P\in {\sf Plus}(\lambda)$, let 
\[{\sf wt}_x(P)=x_1^{\alpha_1} x_2^{\alpha_2} \cdots x_n^{\alpha_n}.\] 
Here, $\alpha_i$ is the number of $+$'s in the $i$th row of $P$. 
For instance, if $P$ is the rightmost diagram shown above, ${\sf wt}_x(P)=x_1 x_2 x_3^2 x_4$. 
A more refined statistic is 
\[{\sf wt }_{x,y}(P)=\prod_{(i,j)}x_i-y_j.\] 
The product is over those $(i,j)$ with a $+$ in row $i$ and column $j$ of $P$. For the same $P$,
\[{\sf wt}_{x,y}(P)= 
(x_1-y_1)(x_2-y_3)(x_3-y_2)(x_3-y_5)(x_4-y_4).\]
Let $X=\{x_1,x_2,\ldots,x_n\}$ and $Y=\{y_1,y_2,\ldots, y_{n + \lambda_1 - 1}\}$ be two collections of indeterminates. The \emph{factorial Schur function} is
\[s_{\lambda}(X; Y)=\sum_{P\in {\sf Plus}(\lambda)}  {\sf wt}_{x,y}(P).\]
This description arises in, \emph{e.g.}, \cite{KMY}. Moreover, it is an exercise to show 
\[s_{\lambda}(X)=\sum_{P\in {\sf Plus}(\lambda)}  {\sf wt}_x(P)=s_{\lambda}(X;0,0,\ldots).\]

The factorial Schur polynomials  form a 
${\mathbb Z}[Y]$-linear basis of ${\sf Sym} \otimes_{\mathbb Q} {\mathbb Z}[Y]$. In addition,
\begin{equation}
\label{eqn:factorialprod}
s_{\lambda}(X; Y)s_{\mu}(X; Y)=\sum_{\nu}C_{\lambda,\mu}^{\nu} \ s_{\nu}(X; Y).
\end{equation}
For example, one checks that 
\[s_{(1,0)}(x_1,x_2;Y)^2=s_{(2,0)}(x_1,x_2;Y)
+s_{(1,1)}(x_1,x_2;Y)+(y_3-y_2)s_{(1,0)}(x_1,x_2;Y).\]

In view of (\ref{eqn:HT}), the definition of $C_{\lambda,\mu}^{\nu}$ in terms of (\ref{eqn:factorialprod}) gives a definition of $H_{\sf T}^{*}({\sf X})$ that suffices
for our combinatorial ends. 

\subsection{Equivariant restriction}\label{subsection:GKM} We may further assume $\G$ is an algebraic
$n$-torus ${\sf T}$ which acts on ${\mathcal M}$ with finitely many isolated fixed points ${\mathcal M}^{\sf T}$. A feature of equivariant cohomology is that the inclusion ${\mathcal M}^{\sf T}$ into ${\mathcal M}$
induces an module \emph{injection} 
\[H_{\sf T}^{*}({\mathcal M})\hookrightarrow H_{\sf T}^{*}({\mathcal M}^{\sf T})\cong \bigoplus_{{\mathcal M}^{\sf T}}H_{{\sf T}}^{*}(pt)\cong \bigoplus_{{\mathcal M}^{\sf T}}{\mathbb Z}[t_1,\ldots,t_n].\] 
For each ${\sf T}$-invariant cycle $Y$ in ${\mathcal M}$, one has an equivariant cohomology class $[Y]_{\sf T}\in H_{\sf T}^{*}({\mathcal M})$. This injection says that this class is a $\#\{{\mathcal M}^{\sf T}\}$-tuple of polynomials $[Y]|_x$ where $x\in {\mathcal M}^{\sf T}$.  Each polynomial $[Y]|_x$ is an \emph{equivariant restriction}. Under certain assumptions on ${\mathcal M}$, which cover all generalized flag manifolds (such as Grassmannians), one has a divisibility condition on the restrictions. This alludes to the
general and influential package of ideas contained in Goresky-Kottwitz-MacPherson (``GKM'') theory; we refer to \cite{GKM} as well as J.~Tymoczko's survey \cite{Tymoczko.eq}.

One has more precise results (predating \cite{GKM}) for any generalized flag variety.  Work of B.~Kostant-S.~Kumar \cite{KostantKumar} combined with a formula of
H.~Anderson-J.~Jantzen-W.~Soergel \cite{AJS} describes $[Y]|_x$ where $Y$ is a Schubert variety and $x$ is one of the ${\sf T}$-fixed points. For restriction formulas
specific to the Grassmannian $M={\sf Gr}_{k}({\mathbb C}^n)$, see the formula of T.~Ikeda-H.~Naruse \cite[Section~3]{Ikeda.Naruse} in terms of \emph{excited Young diagrams}.\footnote{One can give another formula in terms of certain
specializations of the factorial Schur polynomial; see, \emph{e.g.}, \cite[Theorem~5.4]{Ikeda.Naruse} and the associated references.} From either formula, one 
sees immediately that
\begin{equation}
\label{eqn:subsetabc124}
\xi_{\lambda}|_{\mu}=0 \text{\ unless $\lambda\subseteq \mu$},
\end{equation}
and
\begin{equation}
\label{eqn:gghe}
\xi_{\mu}|_{\mu}\neq 0.
\end{equation}

The central difference between this picture of equivariant cohomology
of Grassmannians and the Borel-type presentation (due to A.~Arabia \cite{Arabia}) is that multiplication can be done \emph{without relations} and computed by
pointwise multiplication of the restriction polynomials. In particular, each instance of (\ref{eqn:HTexp}) gives rise to $\binom{n}{k}$ many
polynomial identities. For example, combining (\ref{eqn:HTexp}) and (\ref{eqn:subsetabc124}) gives
\[\xi_{\lambda}|_{\mu}\cdot \xi_{\mu}|_{\mu}=C_{\lambda,\mu}^{\mu}\xi_{\mu}|_{\mu}.\]
By (\ref{eqn:gghe}), this implies
\begin{equation}
\label{eqn:Arabia}
\xi_{\lambda}|_{\mu}=C_{\lambda,\mu}^{\mu},
\end{equation}
which is a fact first noted (for generalized flag varieties) by A.~Arabia \cite{Arabia}.
It follows that:
\begin{equation}
\label{eqn:equivpieriabc}
\xi_{\lambda}\cdot \xi_{(1)}=\xi_{\lambda}|_{(1)} \xi_{\lambda}+\sum_{\lambda^+} \xi_{\lambda^+},
\end{equation}
where $\lambda^+$ is obtained by adding a box to $\lambda$; see, \emph{e.g.}, \cite[Proposition~2]{Knutson.Tao}. Alternatively, it is an exercise to derive it from a vast generalization due to C.~Lenart-A.~Postnikov's \cite[Corollary~1.2]{Lenart.Postnikov}.

Since $H_{\sf T}^{*}(X)$ is an associative ring, one has
\[ (\xi_{\lambda} \cdot \xi_{\mu}) \cdot \xi_{(1)}= \xi_{\lambda}\cdot ( \xi_{\mu}\cdot \xi_{(1)}),\]
which when expanded using (\ref{eqn:HTexp}) and (\ref{eqn:equivpieriabc}) gives a recurrence that uniquely determines the structure coefficients; we call this
the \emph{associativity recurrence}. Since we will not explicitly need it 
in this chapter we leave it as an exercise (see \cite[Lemma~3.3]{Thomas.Yong} and the references therein).\footnote{We give an analogue (\ref{eqn:Brec}) of the associativity
recurrence in our proof of
Theorem~\ref{thm:typeBCpower2}.}

\section{Young tableaux and jeu de taquin}\label{sec:Young}

There are several combinatorial rules for the Littlewood-Richardson coefficient $c_{\lambda,\mu}^{\nu}$. The one whose theme will pervade this chapter is the \emph{jeu de taquin} rule, which, moreover is the first \emph{proved} rule for the coefficients \cite{Schutzenberger}. 

Let $\nu/\lambda$ be a skew shape. A \emph{standard tableau} $T$ of shape $\nu/\lambda$ is a bijective filling of $\nu/\lambda$ with $1,2,\ldots,|\nu/\lambda|$
such that the rows and columns are increasing. Let ${\sf SYT}(\nu/\lambda)$ be the set of all such tableaux.
An \emph{inner corner} ${\sf c}$ of $\lambda/\mu$ is a maximally southeast box of $\mu$. For $T \in {\sf SYT}(\lambda/\mu)$, a 
\emph{jeu de taquin slide} ${\sf jdt}_{\sf c}(T)$
is obtained as follows. Initially place $\bullet$ in ${\sf c}$, and apply one of the following \emph{slides}, according how $T$ looks near ${\sf c}$:
\begin{itemize}
\item[(J1)] $\tableauS{\bullet & a\\ b}\mapsto \tableauS{b & a\\ \bullet }$ (if $b<a$, or $a$ does not exist)
\smallskip
\item[(J2)] $\tableauS{\bullet & a\\ b}\mapsto \tableauS{a & \bullet\\ b}$ (if $a<b$, or $b$ does not exist)
\end{itemize}
Repeat application of (J1) or (J2) on the new box ${\sf c}'$ where $\bullet$ arrives at. End when  $\bullet$ arrives at a box ${\sf d}$ 
of $\lambda$ that has no labels
south or east of it. Then ${\sf jdt}_{\sf c}(T)$ is obtained by erasing $\bullet$.

A \emph{rectification} of $T\in {\sf SYT}(\lambda/\mu)$ is defined iteratively. Pick an inner corner ${\sf c}_0$ of $\lambda/\mu$ and compute
$T_1:={\sf jdt}_{{\sf c}_0}(T) \in {\sf SYT}(\lambda^{(1)}/\mu^{(1)})$. Let ${\sf c}_1$ be an inner corner of $\lambda^{(1)}/\mu^{(1)}$ and compute
$T_2:={\sf jdt}_{{\sf c}_1}(T_1)\in {\sf SYT}(\lambda^{(2)}/\mu^{(2)})$. Repeat $|\mu|$ times, arriving at a standard tableau of straight (\emph{i.e.}, partition) shape. Let ${\sf Rect}_{\{{\sf c}_i\}}(T)$ be the result.

\begin{theorem}[First fundamental theorem of jeu de taquin]\label{thm:firstjdt}
${\sf Rect}_{\{{\sf c}_i\}}(T)$ is independent of the choice of sequence of successive inner corners $\{{\sf c}_i\}$.
\end{theorem}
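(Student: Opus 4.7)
The plan is to prove independence by factoring every jeu de taquin slide through a more rigid invariant, namely the Knuth equivalence class of the reading word. Define the reading word $w(T)$ of a skew standard tableau $T$ by concatenating the rows from bottom to top, each read left to right. Let $\equiv_K$ denote Knuth equivalence, the equivalence relation on words generated by the elementary transformations $acb \equiv_K cab$ when $a \leq b < c$ and $bac \equiv_K bca$ when $a < b \leq c$. The strategy has two pillars: (i) every slide preserves the Knuth class of the reading word, and (ii) each Knuth class contains exactly one standard tableau of straight (partition) shape. Together these force any rectification of $T$ to equal the unique straight-shape representative of $[w(T)]_{\equiv_K}$, which is manifestly independent of the chosen sequence of inner corners.

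For (i), I would argue step by step along the trajectory of the hole $\bullet$ during a single slide ${\sf jdt}_{\sf c}(T)$. A single application of (J1) swaps $\bullet$ with the entry $b$ directly below; since $b < a$ (where $a$ sits to the right of $\bullet$), the rows of the tableau change in a way whose effect on $w(T)$ is a composition of elementary Knuth moves of the form $bac \equiv_K bca$, using the column-strict/row-weak inequalities enforced by the tableau condition to check the hypotheses. The case (J2) is symmetric and yields moves of the form $acb \equiv_K cab$. Carefully tracking the boundary entries at the ends of each affected row (the only potentially non-obvious step, since the elementary move looks local but the word picks up entries from adjacent rows) shows that each atomic step of the slide corresponds to one Knuth move on $w(T)$, so $w(T) \equiv_K w(\mathrm{jdt}_{\sf c}(T))$. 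Iterating shows that for any sequence of slides, and in particular any rectification sequence, the reading word stays in the same Knuth class.

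For (ii), I would invoke (and, if needed, recall the proof of) the theorem that RSK row insertion $P(\cdot)$ depends only on the Knuth class, and moreover that if $T$ itself has straight shape, then $P(w(T)) = T$. The standard proof is the bijection-level argument that performing a single elementary Knuth move on $w$ does not change $P(w)$: a direct case analysis on which row receives the bumped entry, split according to the two elementary moves. Combined with $P(w(T)) = T$ for straight-shape $T$, this yields uniqueness: if $T_1, T_2$ are straight-shape standard tableaux with $w(T_1) \equiv_K w(T_2)$, then $T_1 = P(w(T_1)) = P(w(T_2)) = T_2$.

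Assembling: for any choice of successive inner corners $\{{\sf c}_i\}$, the tableau ${\sf Rect}_{\{{\sf c}_i\}}(T)$ is of straight shape by construction and satisfies $w({\sf Rect}_{\{{\sf c}_i\}}(T)) \equiv_K w(T)$ by (i), so by (ii) it equals $P(w(T))$, which does not depend on $\{{\sf c}_i\}$. The main obstacle is (ii): the Knuth-invariance of RSK insertion is the substantive combinatorial content and requires a somewhat delicate case analysis. An alternative route that avoids invoking RSK is a local confluence (``diamond'') argument, showing that for any two inner corners ${\sf c}_1, {\sf c}_2$ of $T$, the slides ${\sf jdt}_{{\sf c}_1}$ and ${\sf jdt}_{{\sf c}_2}$ commute up to a matched choice of subsequent corners; combined with the fact that any two orderings of a finite set differ by a sequence of adjacent transpositions, this would give the theorem without RSK, at the cost of a more intricate geometric case analysis of overlapping slide paths.
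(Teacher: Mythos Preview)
Your approach is correct and is in fact the classical route to this theorem, going back to Sch\"utzenberger and Knuth. The paper, however, does not supply its own proof; it simply refers the reader to Haiman's argument based on \emph{dual equivalence}. That approach is structurally different: Haiman partitions ${\sf SYT}(\nu/\lambda)$ into dual equivalence classes (tableaux related by elementary dual equivalences, which are local moves commuting with jeu de taquin), shows that any two tableaux of the same straight shape are dual equivalent, and deduces confluence from the fact that slides act identically on dual equivalent tableaux. Your Knuth/RSK argument instead passes through an external invariant (the insertion tableau $P(w(T))$) and requires the nontrivial lemma that $P$ is constant on Knuth classes; Haiman's stays internal to the tableau combinatorics and yields the second fundamental theorem in the same stroke. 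One small caution: your claim that ``each atomic step of the slide corresponds to one Knuth move'' is an oversimplification---a single (J1) or (J2) step generally induces a short sequence of elementary Knuth moves on the reading word, not just one, and the bookkeeping of which entries from adjacent rows participate is exactly the ``boundary entries'' subtlety you flag; the standard references (e.g.\ Fulton's \emph{Young Tableaux}, \S1--2) carry this out carefully.
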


Theorem~\ref{thm:firstjdt} permits one to speak of \emph{the} rectification ${\sf Rect}(T)$.

\begin{example}\label{ex:rectWellDef} For instance, here are two different rectification orders for a tableau $T$. 
\[
\begin{picture}(400,120)
\put(0,105){$\tableauS{{\ }&{\bullet }&{ 1}\\{\ }&{2}&{3}\\{ \ }\\{4}}$}
\put(60,95){$\mapsto$}
\put(90,105){$\tableauS{{\ }&{1 }&{ 3}\\{\ }&{2}\\{\bullet }\\{4}}$}
\put(150,95){$\mapsto$}
\put(180,105){$\tableauS{{\ }&{1 }&{ 3}\\{\bullet }&{2}\\{4}\\}$}
\put(240,95){$\mapsto$}
\put(270,105){$\tableauS{{\bullet }&{1 }&{ 3}\\{2 }\\{4}}$}
\put(330,95){$\mapsto$}
\put(360,105){$\tableauS{{1 }&{3 }\\{2 }\\{4}}$}
\put(0,35){$\tableauS{{\ }&{\ }&{ 1}\\{\ }&{2}&{3}\\{\bullet }\\{4 }}$}
\put(60,25){$\mapsto$}
\put(90,35){$\tableauS{{\ }&{\ }&{ 1}\\{\bullet }&{2}&{3}\\{4 }}$}
\put(150,25){$\mapsto$}
\put(180,35){$\tableauS{{\ }&{\bullet }&{ 1}\\{2  }&{3}\\{4 }}$}
\put(240,25){$\mapsto$}
\put(270,35){$\tableauS{{\bullet }&{1 }\\{2  }&{3}\\{4 }}$}
\put(330,25){$\mapsto$}
\put(360,35){$\tableauS{{1 }&{3 }\\{2  }\\{4 }}$}
\end{picture}\]  
\end{example}

\begin{theorem}[Second fundamental theorem of jeu de taquin]\label{thm:secondjdt}
The cardinality 
\begin{equation}
\label{eqn:secondcard}
\#\{T\in {\sf SYT}(\nu/\lambda): {\sf Rect}(T)=U\}
\end{equation}
is independent of the choice of $U\in {\sf SYT}(\mu)$.
\end{theorem}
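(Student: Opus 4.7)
The plan is to prove Theorem~\ref{thm:secondjdt} via Haiman's theory of \emph{dual equivalence}, which was designed precisely to handle results of this type. There are three ingredients: reverse jeu de taquin slides, stability of dual equivalence under slides, and the combinatorial fact that any two standard tableaux of the same straight shape $\mu$ are dual equivalent.

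First I would introduce the \emph{reverse slide} ${\sf jdt}^{\ast}_{\sf d}$: given an outer corner ${\sf d}$ of the outer shape, place $\bullet$ at ${\sf d}$ and repeatedly swap it with the larger of its north/west neighbors, terminating at a cell ${\sf c}$ that becomes an inner corner of the new shape. A routine check confirms that forward and reverse slides are mutual inverses: if ${\sf jdt}_{\sf c}(T)$ empties cell ${\sf d}$, then ${\sf jdt}^{\ast}_{\sf d}$ applied to the output restores $T$. Consequently, for any $U\in{\sf SYT}(\mu)$ and any valid sequence $(d_0,\ldots,d_{|\lambda|-1})$ of successively added outer corners realizing $\nu/\mu$, there is a unique $T\in{\sf SYT}(\nu/\lambda)$ obtained by successive reverse slides, and ${\sf Rect}(T)=U$ via the reverse sequence of inner corners (using Theorem~\ref{thm:firstjdt}).

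Next I would set up dual equivalence: two tableaux of the same skew shape are \emph{dual equivalent}, written $T\sim T'$, if every sequence of forward or reverse jeu de taquin slides applied to both yields tableaux of the same shape at every stage. This is manifestly an equivalence relation that is preserved by slides: $T\sim T'$ implies ${\sf jdt}_{\sf c}(T)\sim {\sf jdt}_{\sf c}(T')$ and similarly for reverse slides. The heart of the argument is Haiman's key lemma: all elements of ${\sf SYT}(\mu)$ are dual equivalent. I would establish it by introducing \emph{elementary dual equivalences}---local swaps of consecutive entries $i-1,i,i+1$ admissible in certain relative position patterns---and then verifying, by case analysis on the possible positions of these entries relative to any inner/outer corner, that such swaps produce genuine dual equivalences and that they connect any two tableaux in ${\sf SYT}(\mu)$.

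With these tools the theorem reduces to exhibiting a bijection
\[
\Phi_{U,U'}:\ \{T\in{\sf SYT}(\nu/\lambda) : {\sf Rect}(T)=U\}\ \longrightarrow\ \{T'\in{\sf SYT}(\nu/\lambda) : {\sf Rect}(T')=U'\}.
\]
Given $T$ rectifying to $U$ through a fixed inner-corner sequence $(c_0,\ldots,c_{|\lambda|-1})$, record the successive outer corners $(d_0,\ldots,d_{|\lambda|-1})$ emptied along the way. Define $\Phi_{U,U'}(T)$ by applying reverse slides at $(d_{|\lambda|-1},\ldots,d_0)$ starting from $U'$ in place of $U$. Since $U\sim U'$ and dual equivalence is stable under slides, the intermediate shapes match those produced from $U$, so the output lies in ${\sf SYT}(\nu/\lambda)$ and has rectification $U'$ by Theorem~\ref{thm:firstjdt}. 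The map $\Phi_{U',U}$ is visibly an inverse. The main obstacle is Haiman's key lemma: identifying the correct elementary moves and checking stability of dual equivalence under \emph{arbitrary} slides is the combinatorially delicate step; everything else is formal once reverse slides and elementary dual equivalences are in hand.
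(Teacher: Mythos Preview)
Your proposal is correct and follows exactly the route the paper points to: the paper does not give its own proof of Theorem~\ref{thm:secondjdt} but instead refers the reader to Haiman's dual equivalence argument \cite{Haiman}, which is precisely what you outline. Your sketch correctly identifies the three ingredients (reverse slides, stability of dual equivalence under slides, and the fact that all of ${\sf SYT}(\mu)$ forms a single dual equivalence class for straight $\mu$), and the bijection $\Phi_{U,U'}$ you describe is the standard one.
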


\begin{example}\label{ex:anyTabMu} Below are the tableaux $T\in{\sf SYT}((3,2,1)/(2,1))$ such that ${\sf Rect}(T)=U\in{\sf SYT}((2,1))$. Of the tableaux below, $T_1,T_2$ rectify to $U_1$ and $T_3,T_4$ rectify to $U_2$.
\begin{gather*}
T_1=\tableauS{{\ }&{\  }&{ 2}\\{\ }&{1}\\{3 }}
\qquad
T_2=\tableauS{{\ }&{\  }&{ 2}\\{\ }&{3}\\{1 }}
\qquad
T_3=\tableauS{{\ }&{\  }&{ 1}\\{\ }&{3}\\{2 }}
\qquad
T_4=\tableauS{{\ }&{\  }&{ 3}\\{\ }&{1}\\{2 }}
\\ 
U_1=\tableauS{{1} &{2} \\ {3}}
\qquad
U_2=\tableauS{{1} &{3} \\ {2}}
\end{gather*}
\end{example}

For proofs of Theorems~\ref{thm:firstjdt} and~\ref{thm:secondjdt} we recommend the self-contained argument found in M.~Haiman's \cite{Haiman}, which
is based on his theory of \emph{dual equivalence}.

\begin{theorem}[Jeu de taquin computes the Littlewood-Richardson coefficient]\label{thm:thirdjdt}
Fix $U\in {\sf SYT}(\mu)$. Then $c_{\lambda,\mu}^{\nu}$ equals the number (\ref{eqn:secondcard}).
\end{theorem}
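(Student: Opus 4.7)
The plan is to invoke Hall inner product duality, then expand both sides in Gessel's fundamental quasisymmetric basis, and finally group the resulting SYT sums by jdt rectification. Since $\langle s_\lambda s_\mu, s_\nu\rangle = \langle s_\mu, s_{\nu/\lambda}\rangle$ under the Hall inner product, the defining identity $s_\lambda s_\mu = \sum_\nu c_{\lambda,\mu}^\nu s_\nu$ is equivalent to $s_{\nu/\lambda} = \sum_\mu c_{\lambda,\mu}^\nu s_\mu$, where $s_{\nu/\lambda} = \sum_{T\in{\sf SSYT}(\nu/\lambda)} x^T$. So it suffices to show
\[
s_{\nu/\lambda} \;=\; \sum_\mu N_{\lambda,\mu}^\nu\, s_\mu,
\]
where $N_{\lambda,\mu}^\nu$ denotes the common value of (\ref{eqn:secondcard}), well-defined by Theorem~\ref{thm:secondjdt}.

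For the quasisymmetric expansion, I would standardize each SSYT by relabelling its entries $1,\dots,|\nu/\lambda|$ in the usual order, and collect those with a given standardization $\tilde T$. This yields
\[
s_{\nu/\lambda} \;=\; \sum_{\tilde T\in{\sf SYT}(\nu/\lambda)} F_{\mathrm{Des}(\tilde T)}, \qquad s_\mu \;=\; \sum_{\tilde U\in{\sf SYT}(\mu)} F_{\mathrm{Des}(\tilde U)},
\]
where $\mathrm{Des}(\tilde T) = \{i : i+1 \text{ lies strictly below } i \text{ in } \tilde T\}$ and $F_S$ is Gessel's fundamental quasisymmetric function.

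The technical heart is to prove that each slide (J1) or (J2) preserves the descent set of a standard tableau. Slide (J2) is immediate: the moved entry $a$ stays in its row, so no row coordinate changes. Slide (J1), where $b$ moves from row $r+1$ to row $r$, requires a short case check on the pairs $(b-1,b)$ and $(b,b+1)$: using $b<a$ together with strict row/column increase in an SYT, $b-1$ cannot occupy any square east of $\bullet$ in row $r$ (those squares hold values $\geq a > b$), and $b+1$ cannot occupy any square in row $r+1$ (east of $b$, strict row increase forces values $>b$, and $b+1$ at $(r+1,c+1)$ would give $a<b+1$, contradicting $b<a$; west of $b$ the entries are strictly smaller than $b$). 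Hence moving $b$ up by one row changes the row gap between $b$ and each of $b\pm 1$ only in ways that do not cross a descent/non-descent boundary, and all entries not involved in the slide are untouched. Iterating over a rectification, ${\sf Rect}(\tilde T) = \tilde U$ implies $\mathrm{Des}(\tilde T)=\mathrm{Des}(\tilde U)$.

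Granted this, I would group the SYT sum by rectification and invoke Theorem~\ref{thm:secondjdt}:
\[
s_{\nu/\lambda}
\;=\; \sum_{\tilde T\in{\sf SYT}(\nu/\lambda)} F_{\mathrm{Des}(\tilde T)}
\;=\; \sum_\mu N_{\lambda,\mu}^\nu \sum_{\tilde U\in{\sf SYT}(\mu)} F_{\mathrm{Des}(\tilde U)}
\;=\; \sum_\mu N_{\lambda,\mu}^\nu\, s_\mu,
\]
so $N_{\lambda,\mu}^\nu=c_{\lambda,\mu}^\nu$. The principal obstacle is the descent-preservation lemma; it is elementary but needs the row/column strict increase and the hypothesis $b<a$ of (J1) to be used cleanly. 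A slicker alternative is to invoke Haiman's dual equivalence theory (cited just after Theorem~\ref{thm:secondjdt}), which bundles descent preservation together with Theorems~\ref{thm:firstjdt} and~\ref{thm:secondjdt}.
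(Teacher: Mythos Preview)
Your approach is sound and genuinely different from the two the paper sketches. The paper describes (i) a \emph{bijective argument} via Robinson--Schensted, matching pairs of semistandard tableaux $(A,B)$ of shapes $\lambda,\mu$ with pairs $(C,D)$ where $C\in{\sf SYT}(\nu/\lambda)$ rectifies to $S_\mu$ and $D$ is semistandard of shape $\nu$; and (ii) an \emph{associativity argument}, defining a putative ring via the jeu de taquin count and verifying commutativity, associativity, and Pieri's rule. Your route through Gessel's fundamental quasisymmetric expansion and descent preservation under jdt is a third standard path; it avoids RSK and any ring-theoretic bootstrap, and invokes Theorem~\ref{thm:secondjdt} directly. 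Each approach has its merits: the bijective one is the most concrete, the associativity one generalizes readily (and is the template the paper uses in the equivariant and shifted settings), and yours is arguably the shortest once the quasisymmetric framework is in hand.

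One gap in your (J1) case check: you rule out $b-1$ east of $\bullet$ in row $r$, but not $b-1$ at a position $(r,c'')$ with $c''<c$. That case must also be excluded, since otherwise moving $b$ from row $r+1$ to row $r$ would remove $b-1$ from the descent set. The fix is short: throughout a single slide the filled boxes together with $\bullet$ form a skew shape $\nu/\mu'$, so if $b-1$ sat at $(r,c'')$ with $c''<c$ then $(r+1,c'')$ is also a box of that shape (from $\lambda'_{r+1}\le\lambda'_r<c''$ and $c''<c\le\nu_{r+1}$), and its entry $x$ would satisfy $b-1<x$ by column strictness and $x<b$ by row strictness in row $r+1$, which is impossible. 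With this patch your descent-preservation lemma, and hence the whole argument, goes through.
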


\begin{example}\label{ex:LRcountTab} Continuing Example \ref{ex:anyTabMu}, fix $U:=U_1$. Then 
$c_{(2,1),(2,1)}^{(3,2,1)}=2=\#\{T_1,T_2\}$.\qed
\end{example}

It is convenient to fix a choice of tableau $U$ in Theorem~\ref{thm:thirdjdt}. Namely, let
$U=S_{\mu}$ be the superstandard tableau of shape $\mu$, which is obtained by filling the boxes of $\mu$ in English reading order
with $1,2,3,\ldots$. This is the choice made in Example~\ref{ex:LRcountTab}. A larger instance is
\[S_{(5,3,1)}=\tableauS{1&2&3&4&5\\ 6&7&8\\ 9}.\]

There are a number of ways to prove a Littlewood-Richardson rule such as Theorem~\ref{thm:thirdjdt}. We describe the two
that we will refer to in this chapter:

\noindent
$\bullet$ \emph{``Bijective argument'':}
In terms of (\ref{eqn:Schurprod}), the most direct is to establish a bijection between pairs $(A,B)$ of semistandard tableau of shape
$\lambda$ and $\mu$ respectively and pairs $(C,D)$ where $C\in {\sf SYT}(\nu/\lambda)$ such that ${\sf Rect}(C)=S_{\mu}$
and $D$ is a semistandard tableau of shape $\nu$. This can be achieved using the \emph{Robinson-Schensted correspondence}. 

\noindent
$\bullet$ \emph{``Associativity argument'':} 
This was used by A.~Knutson-T.~Tao-C.~Woodward \cite{KTW} and A.~Buch-A.~Kresch-H.~Tamvakis \cite{BKT}. Define a
 putative ring $(R,+,\star)$ with additive basis $\{[\lambda]:\lambda\subseteq \Lambda\}$
and product
\[[\lambda]\star[\mu]:=\sum_{\nu\subseteq \Lambda} {\overline c}_{\lambda,\mu}^{\nu}[\nu],\]
where ${\overline c}_{\lambda,\mu}^{\nu}$ is a collection of nonnegative integers.
Assume one can prove $\star$ is commutative and associative and moreover agrees with Pieri's rule, \emph{i.e.}, 
${\overline c}_{\lambda,(p)}^{\nu}=0$ unless $\nu/\lambda$ is a horizontal strip of size $p$, and equals $1$ otherwise. Then it follows that
$c_{\lambda,\mu}^{\nu}={\overline c}_{\lambda,\mu}^{\nu}$. One can apply this to prove Theorem~\ref{thm:thirdjdt}. The proofs of commutativity and associativity use Theorem~\ref{thm:firstjdt}. Typically, the hard step is the proof of associativity, which explains the nomenclature for this
proof technique. \qed

Another formulation of the Littlewood-Richardson rule is in terms of semistandard Young tableaux of shape
$\nu/\lambda$ and content $\mu$. These are fillings $T$ of $\nu/\lambda$ with $\mu_i$ many $i$'s, and such that the rows are
weakly increasing and columns and strictly increasing. The \emph{row reading word} is obtained by reading the entries of $T$ 
along rows, from right to left and from top to bottom. Such a word $(w_1,w_2,\ldots,w_{|\nu/\lambda|})$ 
is \emph{ballot} if for every fixed $i,k\geq 1$,
\[\#\{j\leq k: w_j=i\}\geq \#\{j\leq k: w_j=i+1\}.\]
A tableau is ballot if its reading word is ballot.
\begin{theorem}[Ballot version of the Littlewood-Richardson rule]\label{thm:LRsemi}
$c_{\lambda,\mu}^{\nu}$ equals the number of semistandard tableaux of shape $\nu/\lambda$ and content $\mu$ that are ballot.
\end{theorem}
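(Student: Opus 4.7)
The plan is to deduce Theorem~\ref{thm:LRsemi} from Theorem~\ref{thm:thirdjdt} by constructing an explicit bijection. Invoking Theorem~\ref{thm:thirdjdt} with $U:=S_\mu$ gives $c_{\lambda,\mu}^\nu = \#\{T \in {\sf SYT}(\nu/\lambda) : {\sf Rect}(T) = S_\mu\}$, so it suffices to produce a bijection
\[\Phi: \{T \in {\sf SYT}(\nu/\lambda) : {\sf Rect}(T) = S_\mu\} \longrightarrow \{\text{ballot SSYT of shape } \nu/\lambda,\ \text{content } \mu\}.\]

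Forward, given such a $T$, let $r(k)$ be the row of $S_\mu$ containing $k$, and define $\Phi(T)$ by replacing each entry $k$ of $T$ by $r(k)$. Since row $i$ of $S_\mu$ has $\mu_i$ entries, $\Phi(T)$ has content $\mu$. Conversely, given a semistandard tableau $S$ of shape $\nu/\lambda$ with content $\mu$, its \emph{standardization} ${\sf std}(S)$ is obtained by relabeling the $\mu_1$ copies of $1$ with $1, \ldots, \mu_1$ using a fixed reading-order convention, then the $\mu_2$ copies of $2$ with $\mu_1+1, \ldots, \mu_1+\mu_2$, and so on; the convention is chosen so that ${\sf std}(S) \in {\sf SYT}(\nu/\lambda)$.

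Two compatibility lemmas would then show $\Phi$ and $S \mapsto {\sf std}(S)$ are mutually inverse. \textbf{(i)} Standardization commutes with jeu de taquin: once one defines semistandard jdt slides appropriately (as in \cite{Fulton:YT}), ${\sf std}({\sf Rect}(S)) = {\sf Rect}({\sf std}(S))$. \textbf{(ii)} The unique semistandard tableau of shape $\mu$ with content $\mu$ is the \emph{Yamanouchi tableau} $Y_\mu$ (row $i$ filled with $i$'s), and ${\sf std}(Y_\mu) = S_\mu$. Combined, $S$ rectifies to $Y_\mu$ if and only if ${\sf std}(S)$ rectifies to $S_\mu$. Finally, ballotness of $S$ is equivalent to ${\sf Rect}(S) = Y_\mu$: both are captured by the condition that the row reading word of $S$ be Knuth-equivalent to the reading word of $Y_\mu$, and Theorem~\ref{thm:firstjdt} tells us rectification preserves Knuth class.

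The main obstacle is verifying \textbf{(i)} together with the Knuth-theoretic characterization of ballotness. Both rest on a careful case analysis of (J1) and (J2) applied to semistandard tableaux with repeated entries: a single semistandard slide unfolds, under standardization, into a canonical sequence of standard slides, and the chosen reading convention is precisely what preserves relative order. An alternative route would be the associativity argument described in Section~\ref{sec:Young}: verify Pieri's rule from the ballot model, then prove commutativity and associativity of the resulting product combinatorially (again relying on Theorem~\ref{thm:firstjdt}); however, the bijective route gives the most direct reduction to Theorem~\ref{thm:thirdjdt}.
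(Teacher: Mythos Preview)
Your proposal is correct and takes essentially the same approach as the paper: both argue via the standardization bijection between ballot semistandard tableaux of shape $\nu/\lambda$ and content $\mu$, and standard tableaux of that shape rectifying to $S_\mu$. The paper's proof sketch simply names the standardization map and leaves the verification (suggested by induction on $|\lambda|$) as an exercise; you supply more of the mechanism, namely the inverse map $\Phi$, the commutation of standardization with \emph{jeu de taquin}, and the Knuth-class characterization of ballotness.
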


\begin{example}\label{ex:LRballotTab}
Suppose $\lambda=(3,2,1),\mu=(3,2,1),\nu=(4,4,3,1)$. Below are the $3$ semistandard tableaux of shape $\nu/\lambda$ and content $\mu$ that are ballot.
\begin{gather*}
\tableauS{{\ }&{\  }&{\ }&{1}\\{\ }&{\ }&{1}&{2}\\{\ }&{1}&{2}\\{3}&}
\qquad
\tableauS{{\ }&{\  }&{\ }&{1}\\{\ }&{\ }&{1}&{2}\\{\ }&{1}&{3}\\{2}&}
\qquad
\tableauS{{\ }&{\  }&{\ }&{1}\\{\ }&{\ }&{1}&{2}\\{\ }&{2}&{3}\\{1}&}
\end{gather*}
\end{example}

\noindent
\emph{Proof sketch for Theorem~\ref{thm:LRsemi}:} Given a semistandard
tableau $T$, one creates a standard tableau $T'$ of the same shape by replacing all $\mu_1$ many $1$'s by $1,2,3,\ldots,\mu_1$ from left to right
and then replacing the (original) $\mu_2$ many $2$'s by $\mu_1+1,\mu_1+2,\ldots,\mu_1+\mu_2$ etc. This process is called
\emph{standardization}. Standardizing the tableaux in Example~\ref{ex:LRballotTab} respectively gives:
\begin{gather*}
\tableauS{{\ }&{\  }&{\ }&{3}\\{\ }&{\ }&{2}&{5}\\{\ }&{1}&{4}\\{6}&}
\qquad
\tableauS{{\ }&{\  }&{\ }&{3}\\{\ }&{\ }&{2}&{5}\\{\ }&{1}&{6}\\{4}&}
\qquad
\tableauS{{\ }&{\  }&{\ }&{3}\\{\ }&{\ }&{2}&{5}\\{\ }&{4}&{6}\\{1}&}
\end{gather*}

We claim standardization induces a bijection between the rules of Theorem~\ref{thm:LRsemi} and Theorem~\ref{thm:thirdjdt}. More precisely,
if $T$ is furthermore ballot, then $T'$ satisfies ${\sf rect}(T')=S_{\mu}$. We leave it as an exercise to establish this, \emph{e.g.}, by induction on $|\lambda|$.
\qed

There is a polytopal description of the Littlewood-Richardson rule derivable from Theorem~\ref{thm:LRsemi}. We first learned this
from a preprint version of \cite{Mulmuley}. Suppose $T$ is a semistandard tableaux of shape $\nu/\lambda$. 
Set
\[r_k^i=r_k^i(T)=\#\{\text{$k'$s in the $i$th row of $T$}\}.\]
Let $\ell(\mu)$ be the number of nonzero parts of $\mu$. By convention, let 
$r_{\ell(\mu)+1}^i =0,  r_k^{\ell(\nu)+1} = 0$. 

Now consider the following linear inequalities, constructed to describe the tableaux from Theorem~\ref{thm:LRsemi} that are counted by
$c_{\lambda,\mu}^{\nu}$.

\begin{itemize}
\item[(A)] Non-negativity: $r_{k}^{i}\geq0,  \ \ \forall i,k$.

\item[(B)] Shape constraints: $\lambda_i + \sum_{k}r_{k}^{i}=\nu_i, \ \ \forall i $.

\item[(C)] Content constraints: 
	$\sum_{i}r_{k}^{i}=\mu_k, \ \  \forall k.$ 

\item[(D)] Tableau constraints: $\lambda_{i+1} + \sum_{j\leq k}r_{j}^{i+1}\leq\lambda_{i} + \sum_{j'<k}r_{j'}^{i}, \ \ \forall i $.

\item[(E)] Ballot constraints: $\sum_{i'<i} r_k^{i'} \geq r_{k+1}^i + \sum_{i'<i}	r_{k+1}^{i'}, \ \ \forall i,k.$
\end{itemize}

Define a polytope
\[{\mathcal P}_{\lambda,\mu}^{\nu}=\{(r_k^i): \text{(A)--(E)}\}\subseteq {\mathbb R}^{\ell(\nu)\cdot\ell(\mu)}.\]

The following is a straightforward exercise, once one assumes Theorem~\ref{thm:LRsemi}:

\begin{theorem}[Polytopal Littlewood-Richardson rule]\label{thm:polytopalLR}
$c_{\lambda,\mu}^{\nu}$ counts the number of integer lattice points in ${\mathcal P}_{\lambda,\mu}^{\nu}$.
\end{theorem}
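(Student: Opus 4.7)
The plan is to exhibit an explicit bijection between the ballot semistandard tableaux of shape $\nu/\lambda$ and content $\mu$ (which, by Theorem~\ref{thm:LRsemi}, count $c_{\lambda,\mu}^{\nu}$) and the integer lattice points of ${\mathcal P}_{\lambda,\mu}^{\nu}$. Given such a tableau $T$, define $r_k^i(T)$ to be the number of $k$'s in row $i$. Conversely, given an integer vector $(r_k^i)$ satisfying (A), (B), (D), reconstruct a filling $T$ by placing, in row $i$, the entries $k$ (for $k=1,2,\ldots$) in the consecutive columns $\lambda_i+1+\sum_{j<k}r_j^i,\ldots,\lambda_i+\sum_{j\leq k}r_j^i$. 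By (B), each row $i$ is completely filled up to column $\nu_i$, so $T$ has shape $\nu/\lambda$.

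Next, I would verify that the constraints (A)--(E) are exactly the conditions for $T$ to be a ballot semistandard tableau of content $\mu$. Constraint (A) is the non-negativity of entry counts; (B) fits each row into $\nu/\lambda$; (C) encodes the content $\mu$. For (D), note that the rightmost column occupied by a $k$ in row $i+1$ is $\lambda_{i+1}+\sum_{j\leq k}r_j^{i+1}$ while the leftmost column occupied by a $k$ in row $i$ is $\lambda_i+1+\sum_{j<k}r_j^i$; requiring the former to be strictly less than the latter for every $k$ is precisely the column-strict condition, after observing that for cells of row $i+1$ lying to the left of column $\lambda_i$ (where nothing sits above them) the inequality is vacuous. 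For (E), the row reading word reads each row right-to-left, so within row $i$ all $(k+1)$'s appear in a contiguous block read before any $k$ of that row. The only ballot checks that could fail occur immediately after reading the last $(k+1)$ of some row $i$; at that moment the prefix contains $\sum_{i'\leq i}r_{k+1}^{i'}$ copies of $k{+}1$ and exactly $\sum_{i'<i}r_k^{i'}$ copies of $k$, which makes (E) equivalent to the ballot property at every critical prefix.

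Finally, I would combine these verifications: the maps $T\mapsto (r_k^i(T))$ and $(r_k^i)\mapsto T$ are mutually inverse bijections between ballot semistandard tableaux of shape $\nu/\lambda$, content $\mu$, and the integer points of ${\mathcal P}_{\lambda,\mu}^{\nu}$. Applying Theorem~\ref{thm:LRsemi} completes the proof.

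The main obstacle is the careful bookkeeping for (D), namely treating the boundary where row $i+1$ extends to columns not covered by row $i$; one must confirm the inequality in (D) is automatically satisfied there (using (B) and non-negativity) so that no spurious column-strictness constraint is imposed on cells without a northern neighbor. A secondary subtlety is reducing the ballot condition on \emph{all} prefixes of the row reading word to the single family of critical prefixes indexed by $(i,k)$ in (E); this is handled by the observation that, between two consecutive critical prefixes, the difference $\#\{k\}-\#\{k+1\}$ only weakly increases.
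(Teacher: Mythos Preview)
Your proposal is correct and is exactly the approach the paper intends: the paper states that the inequalities (A)--(E) were ``constructed to describe the tableaux from Theorem~\ref{thm:LRsemi}'' and declares the result ``a straightforward exercise, once one assumes Theorem~\ref{thm:LRsemi}.'' You have simply carried out that exercise in full, verifying that the map $T\mapsto (r_k^i(T))$ is a bijection onto the lattice points of ${\mathcal P}_{\lambda,\mu}^{\nu}$ by matching each constraint to the corresponding tableau condition.
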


\begin{example}\label{ex:LRpolytope}
Using $\lambda,\mu,\nu$ as in Example \ref{ex:LRballotTab}, ${\mathcal P}_{\lambda,\mu}^{\nu}$ has $3$ integer lattice points 
$R(T_j)=(r(T_j)_{ik})=(r_k^i(T_j))$ below. Each $r_k^i(T_j)=\#\{\text{$k'$s in the $i$th row of $T_j$}\}$, where the $T_j$ are as in Example \ref{ex:LRballotTab}.
\[R(T_1)=\begin{pmatrix} 
1 & 0& 0 \\
1 & 1& 0 \\
1 & 1& 0 \\
0 & 0& 1 
\end{pmatrix}\qquad 
R(T_2)=\begin{pmatrix} 
1 & 0& 0 \\
1 & 1& 0 \\
1 & 0& 1 \\
0 & 1& 0 
\end{pmatrix}\qquad
R(T_3)=\begin{pmatrix} 
1 & 0& 0 \\
1 & 1& 0 \\
0 & 1& 1 \\
1 & 0& 0 
\end{pmatrix}
\]
\end{example}

Another conversation concerns ${\sf LR}_r=\{(\lambda,\mu,\nu): \ell(\lambda),\ell(\mu),\ell(\nu)\leq r: c_{\lambda,\mu}^{\nu}\neq 0\}$.

\begin{corollary}\label{cor:LRsemigroup}
${\sf LR}_r$ is a semigroup, \emph{i.e.}, if $(\lambda,\mu,\nu), (\alpha,\beta,\gamma)\in {\sf LR}_r$, then $(\lambda+\alpha,\mu+\beta,\nu+\gamma)\in {\sf LR}_r$.
\end{corollary}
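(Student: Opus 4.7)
The plan is to invoke Theorem~\ref{thm:polytopalLR} and exploit the linearity of the defining inequalities (A)--(E) in the parameters $\lambda,\mu,\nu$ and the coordinates $r_k^i$. The semigroup property will drop out as a Minkowski-sum statement for the polytopes $\mathcal{P}_{\lambda,\mu}^{\nu}$.

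First, I would observe that every inequality in the list (A)--(E) is of the form $L_1(\lambda,\mu,\nu) + L_2(r) \leq 0$ (or $=0$) where $L_1,L_2$ are $\mathbb{Z}$-linear. Concretely, fix lattice points $R = (r_k^i) \in \mathcal{P}_{\lambda,\mu}^{\nu}$ and $S = (s_k^i) \in \mathcal{P}_{\alpha,\beta}^{\gamma}$, guaranteed by Theorem~\ref{thm:polytopalLR} since $c_{\lambda,\mu}^{\nu}, c_{\alpha,\beta}^{\gamma} \neq 0$. I claim $R+S \in \mathcal{P}_{\lambda+\alpha,\mu+\beta}^{\nu+\gamma}$. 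Indeed: (A) is preserved because the sum of nonnegative integers is nonnegative; (B) gives $(\lambda_i + \alpha_i) + \sum_k (r_k^i + s_k^i) = \nu_i + \gamma_i$; (C) gives $\sum_i (r_k^i + s_k^i) = \mu_k + \beta_k$; and (D), (E) are preserved because each is a sum of two valid inequalities, one for $R$ and one for $S$. (Strictly speaking, one must pad the shorter-content tableau data with zeros so that the two points live in a common ambient $\mathbb{R}^{\ell \cdot m}$, but this is automatic from the convention $r_{\ell(\mu)+1}^i = 0$ and $r_k^{\ell(\nu)+1} = 0$.)

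Consequently $\mathcal{P}_{\lambda+\alpha,\mu+\beta}^{\nu+\gamma}$ contains the integer lattice point $R+S$, so by Theorem~\ref{thm:polytopalLR} we have $c_{\lambda+\alpha,\mu+\beta}^{\nu+\gamma} \geq 1$. It remains only to verify the length condition: if $\ell(\lambda),\ell(\alpha) \leq r$, then $\ell(\lambda+\alpha) \leq r$, and likewise for $\mu+\beta$ and $\nu+\gamma$. This shows $(\lambda+\alpha,\mu+\beta,\nu+\gamma) \in \mathsf{LR}_r$.

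I do not anticipate a real obstacle here; the content of the argument is entirely contained in the linearity of (A)--(E), and the only mildly delicate point is the bookkeeping of matrix sizes when $R$ and $S$ are defined with respect to different shapes. This can be handled uniformly by working in a single ambient space indexed by $\{1,\dots,r\} \times \{1,\dots,r\}$, since all relevant partitions have at most $r$ parts.
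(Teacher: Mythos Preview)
Your proof is correct and is essentially identical to the paper's own argument: both invoke Theorem~\ref{thm:polytopalLR} to obtain lattice points in ${\mathcal P}_{\lambda,\mu}^{\nu}$ and ${\mathcal P}_{\alpha,\beta}^{\gamma}$, observe that the defining constraints (A)--(E) are linear in the parameters and coordinates so that the sum of the two lattice points lies in ${\mathcal P}_{\lambda+\alpha,\mu+\beta}^{\nu+\gamma}$, and then apply Theorem~\ref{thm:polytopalLR} once more. Your version is slightly more explicit about the per-inequality check and the ambient-space padding, but the content is the same.
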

\begin{proof}
Since $(\lambda,\mu,\nu), (\alpha,\beta,\gamma)\in {\sf LR}_r$, by Theorem~\ref{thm:polytopalLR} there exists lattice points
 $(r_{k}^i)\in {\mathcal P}_{\lambda,\mu}^{\nu}$ and $({\overline r}_{k}^i)\in {\mathcal P}_{\alpha,\beta}^{\gamma}$. By examination
 of the inequalities (A)-(E), clearly $(r_{i}^k+\overline{r}_i^k)$ is a lattice point in ${\mathcal P}_{\lambda+\alpha,\mu+\beta}^{\nu+\gamma}$, 
 and we are done by another application of Theorem~\ref{thm:polytopalLR}.
\end{proof}

This \emph{Littlewood-Richardson semigroup} ${\sf LR}_r$ is discussed in A.~Zelevinsky's article \cite{Zelevinsky}. That 
work concerns the
Horn and saturation conjectures (we will discuss these in Section~\ref{sec:Horn}). The point that a polytopal rule for $c_{\lambda,\mu}^{\nu}$
implies the semigroup property already appears in \emph{ibid.} It is also true that ${\sf LR}_r$ is \emph{finitely generated}. 
This is proved in A.~Elashvili's \cite{Elashvili}, who credits the argument to M.~Brion-F.~Knop from ``August-September, 1989''. This argument
(which applies more generally to tensor product multiplicities of any reductive Lie group) is not combinatorial. For another demonstration, see the proof 
of Proposition~\ref{prop:EqLRfinite}. Despite subsequent advances
in understanding Littlewood-Richardson coefficients, the following remains open:

\begin{problem}[\emph{cf.} Problems A and C of \cite{Zelevinsky}]\label{problem:zelev}
Explicitly give a finite (minimal) list of generators of ${\sf LR}_r$.
\end{problem}

In connection to the work of Section~\ref{sec:Horn}, a closely related problem has been solved by P.~Belkale
\cite{Belkale:rays}. His paper determines the extremal rays of the rational polyhedral cone defined by the points
of ${\sf LR}_r$. 

\section{Edge labeled tableaux and jeu de taquin}\label{sec:ejdt}

The history of the combinatorics of $C_{\lambda,\mu}^{\nu}$ is interesting in its own right.
The first combinatorial rule for $C_{\lambda,\mu}^{\nu}$ is due to A.~Molev-B.~Sagan \cite{Molev.Sagan:99}, who solved an even more
general problem. Under the obvious specialization, this rule is not  positive in the sense of
Theorem~\ref{thm:Graham}. The first such rule, in terms of \emph{puzzles}, was found by A.~Knutson and T.~Tao \cite{Knutson.Tao}. Subsequently,
visibly equivalent tableaux rules were independently found by V.~Kreiman \cite{Kreiman} and A.~Molev \cite{Molev}. Later, P.~Zinn-Justin \cite{ZinnJustin}
studied the puzzle rule of \cite{Knutson.Tao} based on the quantum integrability of the tiling model that underlies puzzles. Coming full circle,
a point
made in \cite[Section 6.5]{ZinnJustin} is that the rule of \cite{Molev.Sagan:99} \emph{does} provide a positive rule after all, under the ``curious
identity'' of \cite[Section 6.4]{ZinnJustin}.

This work takes a different view. It is about the \emph{edge labeled tableaux} introduced by H.~Thomas and the third author \cite{Thomas.Yong}.
A \emph{horizontal edge} of $\nu/\lambda$ is an east-west line segment which either lies along the lower or
upper boundary of $\nu/\lambda$, or which separates two boxes of $\nu/\lambda$.
An \emph{equivariant filling} of $\nu/\lambda$ is an assignment of elements of $[N]:=\{1,2,\ldots,N\}$ to
the boxes of $\nu/\lambda$ or to a horizontal edge of $\nu/\lambda$. While every box contains a single label,
each horizontal edge holds an element of $2^{[N]}$.
An equivariant filling is \emph{standard} if every label in $[N]$ appears exactly once and moreover any label is:
\begin{itemize}
\item strictly smaller than any label in its southern edge and the label in the
box immediately below it;
\item strictly larger than any label in its northern edge and the label in the box
immediately above it; and
\item weakly smaller than the label in the box immediately to its right.
\end{itemize}
(No condition is placed on the labels of adjacent edges.) Let ${\sf EqSYT}(\nu/\lambda,\ell)$ be the set of equivariant standard tableaux with entries from
$[N]$.

\begin{example}
\label{exa:June25abc}
Let $\nu/\lambda=(4,3,2)/(3,2,1)\subseteq \Lambda=3\times 4$ and
\[\begin{picture}(100,50)
\put(0,35){$T=\tableauL{{\ }&{\ }&{ \ }&{5}\\{\ }&{ \ }&{4}\\{\ }&{6}}$}
\put(32,-7){$3$}
\put(63,31){$1,2$}
\end{picture}\]
Then $T\in {\sf EqSYT}(\nu/\lambda,6)$.\qed
\end{example}

Given an inner corner ${\sf c}$ and $T\in {\sf EqSYT}(\nu/\lambda,N)$, if none of the following possibilities
applies, terminate. Otherwise use the unique applicable case (below ${\sf c}$ contains the $\bullet$):
\begin{itemize}
\item[(J1)] $\tableauS{\bullet & a\\ b}\mapsto \tableauS{b & a\\ \bullet }$ (if $b<a$, or $a$ does not exist)
\medskip
\item[(J2)] $\tableauS{\bullet & a\\ b}\mapsto \tableauS{a & \bullet\\ b}$ (if $a<b$, or $b$ does not exist)
\item[(J3)] $\begin{picture}(40,30)\put(0,0){$\tableauS{\bullet & a}$}
\put(3,-5){$S$}
\end{picture} \mapsto
\begin{picture}(40,30)\put(0,0){$\tableauS{a & \bullet }$}
\put(3,-5){$S$}
\end{picture}$ (if $a<\min(S)$)

\item[(J4)] $\begin{picture}(40,30)\put(0,0){$\tableauS{\bullet & a}$}
\put(3,-5){$S$}
\end{picture} \mapsto
\begin{picture}(40,30)\put(0,0){$\tableauS{s & a }$}
\put(3,-5){$S'$}
\end{picture}$ (if $s:=\min(S)<a$ and $S':=S\setminus \{s\}$)
\end{itemize}
This \emph{equivariant jeu de taquin slide} into ${\sf c}$ is denoted by
${\sf Ejdt}_{{\sf c}}(T)$. Clearly,
${\sf Ejdt}_{{\sf c}}(T)$ is also a standard equivariant filling.

\emph{The} rectification of $T$, denoted ${\sf Erect}(T)$,
is the result of successively using ${\sf Ejdt}_{{\sf c}}$ by choosing ${\sf c}$ that is
eastmost among all choices of inner corners at each stage.

\begin{example}
\label{exa:1.1}
Continuing Example~\ref{exa:June25abc},
we use ``$\bullet$'' to indicate the boxes that are moved into during 
${\sf Erect}(T)$. The rectification of the third column is as follows:
\begin{equation}
\label{eqn:ex1.1.1}
\begin{picture}(150,50)
\put(0,35){$\tableauL{{\ }&{\ }&{ \bullet }&{5}\\{\ }&{ \ }&{4}\\{\ }&{6}}$}
\put(5,-7){$3$}
\put(39,31){$1,2$}
\put(110,35){$\tableauL{{\ }&{\ }&{ 1 }&{5}\\{\ }&{ \ }&{4}\\{\ }&{6}}$}
\put(115,-7){$3$}
\put(155,31){$2$}
\put(90,35){$\mapsto$}
\end{picture}
\end{equation}
The rectification of the second column given by:
\begin{equation}
\label{eqn:ex1.1.2}
\begin{picture}(400,50)
\put(0,35){$\tableauL{{\ }&{\ }&{ 1 }&{5}\\{\ }&{ \bullet }&{4}\\{\ }&{6}}$}
\put(5,-7){$3$}
\put(45,31){$2$}
\put(90,35){$\mapsto$}
\put(110,35){$\tableauL{{\ }&{\bullet }&{ 1 }&{5}\\{\ }&{ 4 }\\{\ }&{6}}$}
\put(115,-7){$3$}
\put(155,31){$2$}
\put(190,35){$\mapsto$}
\put(210,35){$\tableauL{{\ }&{1 }&{ \bullet }&{5}\\{\ }&{ 4 }\\{\ }&{6}}$}
\put(215,-7){$3$}
\put(255,31){$2$}
\put(290,35){$\mapsto$}
\put(310,35){$\tableauL{{\ }&{1 }&{ 2 }&{5}\\{\ }&{ 4 }\\{\ }&{6}}$}
\put(315,-7){$3$}
\end{picture}
\end{equation}
and finally the rectification of the first column given by:
\begin{equation}
\label{eqn:ex1.1.3}
\begin{picture}(450,55)
\put(5,35){$\tableauL{{\ }&{1 }&{ 2 }&{5}\\{\ }&{ 4 }\\{\bullet }&{6}}$}
\put(10,-7){$3$}
\put(85,25){$\mapsto$}
\put(100,35){$\tableauL{{\ }&{1 }&{ 2 }&{5}\\{\bullet }&{ 4 }\\{ 3 }&{6}}$}
\put(180,25){$\mapsto$}
\put(195,35){$\tableauL{{\ }&{1 }&{ 2 }&{5}\\{3 }&{ 4 }\\{ \bullet }&{6}}$}
\put(275,25){$\mapsto$}
\put(290,35){$\tableauL{{\bullet }&{1 }&{ 2 }&{5}\\{3 }&{ 4 }\\{ 6 }}$}
\put(350,25){$\mapsto\cdots\mapsto$}
\put(400,35){$\tableauL{{1 }&{2 }&{ 5 }\\{3 }&{ 4 }\\{ 6 }}$}
\end{picture}
\end{equation}
Here the ``$\mapsto\cdots\mapsto$'' refers to slides moving the
$\bullet$ right in the first row.\qed
\end{example}

We now define ${\sf Ejdtwt}(T)\in {\mathbb Z}[t_1,\ldots,t_n]$
for a standard tableau $T$.
Each box $x$ in $\Lambda$ has a (Manhattan) \emph{distance} from the lower-left box: suppose $x$ has matrix coordinates 
$(i,j)$, then
\[\dist(x) := k+j-i.\]
Next, assign $x\in\Lambda$ the weight $\beta(x)=t_{\dist(x)}-t_{\dist(x)+1}$. 

\emph{If after rectification of a column, the label ${\mathfrak l}$
still remains an edge label, ${\sf Ejdtfactor}({\mathfrak l})$ is declared to be zero.} Otherwise, suppose
an edge label ${\mathfrak l}$ \emph{passes} through a box
$x$ if it occupies $x$ during the equivariant rectification of the column
of $T$ in which
${\mathfrak l}$ begins. Let the boxes passed be
$x_1,x_2,\ldots,x_s$. Also, when the rectification of a column is
complete, suppose the filled boxes strictly to the right of $x_s$ are
$y_1,\ldots,y_t$. Set
\[{\sf Ejdtfactor}({\mathfrak l})=(\beta(x_1)+\beta(x_2)+\cdots+\beta(x_s))+
(\beta(y_1)+\beta(y_2)+\cdots+\beta(y_t)).\]
Notice that since the boxes $x_1,\ldots,x_s, y_1,\ldots, y_t$
form a hook inside $\nu$, ${\sf Ejdtfactor}(i)=t_e-t_f$ with $e<f$. Now define
\[{\sf Ejdtwt}(T):=\prod_{{\mathfrak l}} {\sf Ejdtfactor}({\mathfrak l}),\]
where the product is over all edge labels ${\mathfrak l}$ of $T$.

\begin{theorem}[Edge labeled jeu de taquin rule \cite{Thomas.Yong}]
\label{thm:mainequiv}
\[C_{\lambda,\mu}^{\nu}=\sum_T {\sf Ejdtwt}(T),\]
where the sum is over all $T\in {\sf EqSYT}(\nu/\lambda,|\mu|)$ such that
${\sf Erect}(T)=S_{\mu}$.
\end{theorem}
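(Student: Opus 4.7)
The plan is to adapt the \emph{associativity argument} described at the end of Section~3. Put
\[\tilde C_{\lambda,\mu}^{\nu}:=\sum_T {\sf Ejdtwt}(T),\]
where the sum is over $T\in{\sf EqSYT}(\nu/\lambda,|\mu|)$ with ${\sf Erect}(T)=S_\mu$, and consider the putative $H^*_{\sf T}(pt)$-algebra on the basis $\{[\lambda]:\lambda\subseteq\Lambda\}$ with product $[\lambda]\star[\mu]:=\sum_\nu\tilde C_{\lambda,\mu}^{\nu}[\nu]$. By the equivariant associativity recurrence alluded to after (\ref{eqn:equivpieriabc}) (see also \cite[Lemma~3.3]{Thomas.Yong}), the genuine constants $C_{\lambda,\mu}^{\nu}$ are uniquely determined by (I) the equivariant Pieri rule (\ref{eqn:equivpieriabc}) and (II) associativity of $\star$ against $[(1)]$. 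So it suffices to verify (I) and (II) for $\tilde C$.

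For (I), consider $\mu=(1)$. The only contributing tableaux carry a single label ``$1$'' either in a box or on a horizontal edge, and ${\sf Erect}(T)$ must be the single box $S_{(1)}$. If $\nu=\lambda^+$, there is one such $T$ (with ``$1$'' in the added cell) of weight $1$, accounting for the $\sum_{\lambda^+}[\lambda^+]$ term of (\ref{eqn:equivpieriabc}). If $\nu=\lambda$, then ``$1$'' begins on a horizontal edge and rectifies through a hook; summing ${\sf Ejdtfactor}(1)$ over all admissible edge positions inside $\Lambda$ should reproduce $\xi_\lambda|_{(1)}$. The cleanest verification matches the output with the Ikeda--Naruse excited Young diagram formula for $\xi_\lambda|_{(1)}$, or equivalently with a single-variable specialization of the factorial Schur polynomial $s_\lambda(X;Y)$; this reduces to elementary hook arithmetic using $\beta(x)=t_{{\sf dist}(x)}-t_{{\sf dist}(x)+1}$.

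For (II), one must exhibit a weight-preserving bijection certifying
\[\sum_\nu\tilde C_{\lambda,\mu}^{\nu}\,\tilde C_{\nu,\rho}^{\tau} \;=\; \sum_\nu\tilde C_{\mu,\rho}^{\nu}\,\tilde C_{\lambda,\nu}^{\tau}.\]
The natural framework is an \emph{equivariant dual equivalence} extending M.~Haiman's involutions $d_i$ \cite{Haiman} to ${\sf EqSYT}(\nu/\lambda,N)$: the $d_i$ must behave consistently when $i,i\pm1$ appear as edge labels, governed by (J3)--(J4) rather than merely (J1)--(J2). I would establish three axioms: (a) each $d_i$ preserves the ${\sf Erect}$-class of $T$; (b) $d_i$ commutes appropriately with ${\sf Ejdt}_{\sf c}$ on disjoint supports; (c) $d_i$ preserves ${\sf Ejdtwt}$. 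Granting these, the classical dual-equivalence argument (as in the proof of Theorem~\ref{thm:thirdjdt}) produces the desired bijection and so (II).

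The principal obstacle is axiom (c). Unlike the classical setting, the quantity ${\sf Ejdtfactor}(\mathfrak{l})$ is a sum of $\beta$-weights over a hook traced \emph{dynamically} during column rectification, so swapping the context of an edge label (which is what $d_i$ does near a (J3)/(J4) configuration) must be shown to relocate this hook in a weight-preserving fashion. The case analysis over the four slide types is delicate, but is aided by the identity $\beta(x)=\beta(x')$ whenever $x,x'$ share an antidiagonal (equivalently, share ${\sf dist}$): many of the local swaps translate hook contributions along constant-$\beta$ lines, producing cancellations analogous to those in the proof of Graham positivity. Completing this analysis, and hence axiom (c), closes the proof.
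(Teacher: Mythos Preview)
Your proposal has a genuine gap, and it is precisely the gap the paper warns about. Immediately after stating Theorem~\ref{thm:mainequiv}, the paper notes that one would like to verify the associativity recurrence directly for $\overline C_{\lambda,\mu}^{\nu}:=\sum_T{\sf Ejdtwt}(T)$, but then says: ``The rule of Theorem~\ref{thm:mainequiv} appears to be too `rigid' to carry out this strategy. Instead, in \cite{Thomas.Yong}, a more `flexible version' in terms of semistandard edge labeled tableaux is introduced, together with a corresponding collection of jeu de taquin slides.'' So the actual proof in \cite{Thomas.Yong} does not run the associativity argument on the standard edge labeled tableaux at all; it passes to a semistandard theory with a different (and more intricate) slide calculus, proves the ballot formulation (Theorem~\ref{thm:ty}) there, and deduces Theorem~\ref{thm:mainequiv} from that.

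The reason your direct approach stalls is exactly your axiom~(c). The quantity ${\sf Ejdtfactor}(\mathfrak{l})$ is not a local statistic of $T$: it is defined via the hook $x_1,\dots,x_s,y_1,\dots,y_t$ traced out during \emph{column} rectification, and whether the factor is even nonzero depends on whether $\mathfrak{l}$ has left its edge by the time its column finishes rectifying. A Haiman-style involution $d_i$ on ${\sf EqSYT}$ would alter which slide among (J1)--(J4) fires at a given step, hence alter the sequence of boxes an edge label passes through, hence alter both the hook and the vanishing condition. Your observation that $\beta(x)$ is constant along antidiagonals does not neutralize this: the hook length $s+t$ can change, and an edge label can go from ``eventually absorbed'' (nonzero factor) to ``still an edge label after the column is done'' (zero factor) under such a swap. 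You acknowledge this is the ``principal obstacle'' and then assert that completing the case analysis ``closes the proof,'' but no argument is given, and the paper's explicit remark that the standard model is too rigid is evidence that this analysis does not go through as stated. Separately, your verification of (I) is also only a sketch: you assert that summing ${\sf Ejdtfactor}(1)$ over admissible edge positions ``should reproduce'' the restriction, but do not carry it out.

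If you want to repair the argument, the route taken in \cite{Thomas.Yong} is to replace ${\sf EqSYT}$ by semistandard edge labeled tableaux, where the weight ${\sf Eballotwt}$ of (\ref{eqn:apfactor})--(\ref{eqn:apwt}) is a \emph{static} function of the tableau (no rectification required to compute it). In that setting one can design slides so that weight is manifestly preserved, and then run the associativity recurrence.
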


Since each ${\sf Ejdtfactor}({\mathfrak l})$ is a positive sum of the
indeterminates $\beta_i=t_{i}-t_{i+1}$, Theorem~\ref{thm:mainequiv}
expresses $C_{\lambda,\mu}^{\nu}$ as a polynomial with positive
coefficients in the $\beta_i$'s, in agreement with Theorem~\ref{thm:Graham}.

\begin{example}\label{ex:eqLRcountTab}
Below each $x\in\Lambda=2\times 3$ is filled with $\beta(x)$: 
\[\ytableausetup
{boxsize=2em}\begin{ytableau}
 \scriptscriptstyle t_2-t_3 & \scriptscriptstyle t_3-t_4 & \scriptscriptstyle t_4-t_5 \\
 \scriptscriptstyle t_1-t_2  & \scriptscriptstyle t_2-t_3 & \scriptscriptstyle t_3-t_4
\end{ytableau}\]

$T_1,T_2$ below are those  $T\in {\sf EqSYT}((3,3)/(2,2),3)$ such that
${\sf Erect}(T)=S_{(2,1)}$ with nonzero weight: \[
\begin{picture}(350,50)
\put(0,25){$T_1=\tableauL{{\ }&{\ }&{ 2 }\\{\ }&{ \ }&{3}}$}
\put(33,3){$1$}
\put(125,25){$T_2=\tableauL{{\ }&{\ }&{ 2 }\\{\ }&{ \ }&{3}}$}
\put(180,3){$1$}
\put(250,25){$T_3=\tableauL{{\ }&{\ }&{ 1 }\\{\ }&{ \ }&{3}}$}
\put(322,21){$2$}
\end{picture}
\]
While $T_3$ also satisfies
${\sf Erect}(T)=S_{(2,1)}$, it has weight zero since there is an edge label when one
has completed rectifying the third column.

Theorem \ref{thm:mainequiv} asserts 
\begin{align*}
    C_{(2,2),(2,1)}^{(3,3)}&={\sf Ejdtwt}(T_1)+{\sf Ejdtwt}(T_2)\\
    &=
[((t_1-t_2)+(t_2-t_3))+(t_3-t_4)]+[((t_2-t_3)+(t_3-t_4))+(t_4-t_5)]\\
&=(t_2-t_5)+(t_1-t_4).
\end{align*}
\end{example}

In order to prove Theorem~\ref{thm:mainequiv}, one wishes to adapt the general strategy indicated at the end of Section~\ref{subsection:GKM}. 
If we define ${\overline C}_{\lambda,\mu}^{\nu}:=\sum_T {\sf Ejdtwt}(T)$ (as in Theorem~\ref{thm:mainequiv}) and show this collection of numbers
satisfies the associativity recurrence, one shows $C_{\lambda,\mu}^{\nu}={\overline C}_{\lambda,\mu}^{\nu}$, as desired.\footnote{It would be quite interesting
to give a ``bijective argument'' (in the sense of Section~\ref{sec:Young}) using the combinatorial description of the factorial Schur polynomials.}

The rule of Theorem~\ref{thm:mainequiv} appears to be too ``rigid'' to carry out this strategy. Instead, in \cite{Thomas.Yong}, a more ``flexible
version'' in terms of semistandard edge labeled tableaux is introduced, together with a corresponding collection of jeu de taquin slides.
While we do not wish to revisit the rather technical list of slide rules here, one of the consequences is a ballot rule for $C_{\lambda,\mu}^{\nu}$,
generalizing Theorem~\ref{thm:LRsemi}, which we explain now.

An equivariant
tableau is \emph{semistandard} if the box labels weakly increase along rows (left to right), and all labels strictly increase down
columns.  A single edge may be labeled by a \emph{set} of integers, without repeats; the smallest of them must be strictly greater than the
label of the box above, and the largest must be strictly less than the label of the box below.

\begin{example} Below is an equivariant semistandard Young tableau on $(4,2,2)/(2,1)$.
\[
\begin{picture}(100,60)
\put(20,40){$\tableauL{{\ }&{\ }&{1 }&{1 }\\{ \ }&{1}\\{2}&{3}}$}
\put(60,37){$2,3$} 
\put(83,37){$2$}
\end{picture}
\]
The content of this tableau is $(3,3,2)$.
\qed
\end{example}

Let ${\eqssyt}(\nu/\lambda)$ be the set of all equivariant semistandard Young tableaux of shape $\nu/\lambda$.
A tableau $T\in {\eqssyt}(\nu/\lambda)$ is \emph{ballot} if, for every column $c$ and every label $\ell$, 
\[
  (\#\text{ $\ell$'s weakly right of column }c ) \geq (\#\text{$(\ell+1)$'s weakly right of column }c ).
\]

Given a tableau $T\in \eqssyt(\nu/\lambda)$, a (box or edge) label $\ell$ is \emph{too high} if it appears weakly above the upper edge of 
a box in row $\ell$.  In the above example, all edge labels are too high. (When there are no edge labels, the semistandard and lattice 
conditions imply no box label is too high, but in general the three conditions are independent.)

Suppose an edge label $\ell$ lies on the bottom edge of a box $\xbox$ in row $i$.  Let $\rho_\ell(\xbox)$ be the number of times $\ell$ 
appears as a (box or edge) label strictly to the right of $\xbox$.  We define
\begin{equation}
\label{eqn:apfactor}
  \factor(\ell,\xbox) = t_{{\dist}(\xbox)} - t_{{\dist}(\xbox)+i-\ell+1+\rho_\ell(\xbox)}.
\end{equation}
When the edge label is not too high, this is always of the form $t_p-t_q$, for $p<q$.  (In particular, it is nonzero.)  Finally, define 
\begin{equation}
\label{eqn:apwt}
  {\sf Eballotwt}(T) = \prod \factor(\ell,\xbox),
\end{equation}
the product being over all edge labels $\ell$.

\begin{theorem}[Edge labeled ballot rule {\cite[Theorem~3.1]{Thomas.Yong}}] \label{thm:ty}
$C_{\lambda,\mu}^\nu = \sum_T {\sf Eballotwt}(T)$, 
where the sum is over all $T\in {\eqssyt}(\nu/\lambda)$
of content $\mu$ that are lattice and have no label which is too high.
\end{theorem}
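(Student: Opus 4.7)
The plan is to prove Theorem~\ref{thm:ty} by reducing it to Theorem~\ref{thm:mainequiv} via an equivariant standardization map, in the spirit of the standardization derivation of Theorem~\ref{thm:LRsemi} from Theorem~\ref{thm:thirdjdt}. Given $T \in \eqssyt(\nu/\lambda)$ of content $\mu$, I would construct $\sigma(T) \in {\sf EqSYT}(\nu/\lambda, |\mu|)$ by replacing the $\mu_k$ occurrences of the value $k$ (counted across both box labels and elements of edge label sets) with the consecutive integers $m_{k-1}+1, \ldots, m_k$ (where $m_j=\mu_1+\cdots+\mu_j$) in a canonical reading order: scan rows from top to bottom, within each row replace box labels left to right, and then replace the contents of the bottom-edge set in increasing order. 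The semistandard hypothesis together with the no-label-too-high condition should ensure that $\sigma(T)$ is a well-defined standard equivariant filling with each label occupying a legal cell/edge.

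The key claims are then twofold. \emph{First}, $T$ is ballot if and only if ${\sf Erect}(\sigma(T)) = S_\mu$, which generalizes the familiar fact proved (as an exercise) at the end of the sketch of Theorem~\ref{thm:LRsemi}; this establishes a bijection between the tableaux counted by the right-hand side of Theorem~\ref{thm:ty} and those counted by Theorem~\ref{thm:mainequiv}. \emph{Second}, the weights agree: ${\sf Eballotwt}(T) = {\sf Ejdtwt}(\sigma(T))$. Both weights are products indexed by the same set of edge labels of $T$, so it suffices to match the contributions factor-by-factor. For an edge label $\ell$ lying on the bottom edge of a box $\xbox$ in row $i$, I need to verify that, during the rectification of $\sigma(T)$, the corresponding standardized label traces a hook whose telescoping sum matches $\factor(\ell,\xbox) = t_{\dist(\xbox)} - t_{\dist(\xbox)+i-\ell+1+\rho_\ell(\xbox)}$.

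The main obstacle is controlling the rectification path of a single standardized edge label, since its trajectory depends on interactions with box labels and with standardized siblings arising from the same value. I expect the argument proceeds by induction on the column being rectified: the ballot condition on $T$, together with the chosen standardization order (edge labels processed \emph{after} box labels of the same value in the same row), should force all siblings to exit via rule (J4) before the tracked label begins to slide, while the no-too-high condition rules out vanishing of ${\sf Ejdtfactor}$. Then the resulting horizontal slide picks up exactly one step per box strictly right of $\xbox$ that carries a copy of $\ell$, producing the extra $\rho_\ell(\xbox)$ units in the formula.

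An alternative route avoids Theorem~\ref{thm:mainequiv} entirely: set $\overline{C}_{\lambda,\mu}^\nu := \sum_T {\sf Eballotwt}(T)$ and verify directly that these satisfy equivariant Pieri~(\ref{eqn:equivpieriabc}) and the associativity recurrence outlined in Section~\ref{subsection:GKM}, then invoke the ``associativity argument'' of Section~\ref{sec:Young}. This is essentially the route taken in \cite{Thomas.Yong}, but it requires first installing a full set of semistandard edge labeled jeu de taquin slides and checking their confluence; the associativity step is the technically heavy one there, whereas the standardization route concentrates the difficulty into the single path-tracking lemma above.
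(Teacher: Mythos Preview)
Your ``alternative route'' is in fact the paper's route: as the text just before Theorem~\ref{thm:ty} explains, \cite{Thomas.Yong} proves the ballot rule by developing semistandard edge labeled \emph{jeu de taquin} and running the associativity recurrence directly on $\overline C_{\lambda,\mu}^{\nu}:=\sum_T {\sf Eballotwt}(T)$; Theorem~\ref{thm:mainequiv} is then obtained \emph{from} that semistandard theory, not the other way around. Thus your primary plan---deduce Theorem~\ref{thm:ty} from Theorem~\ref{thm:mainequiv} via standardization---inverts the actual logical order and is circular as written: you are invoking a theorem whose proof in \cite{Thomas.Yong} already rests on the semistandard machinery you are trying to establish. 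The paper even flags why no independent proof of Theorem~\ref{thm:mainequiv} is available to lean on: the standard rule is ``too rigid'' to push through the associativity recurrence by itself.

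Setting circularity aside and imagining Theorem~\ref{thm:mainequiv} were granted, the standardization route is plausible but your sketch underestimates the weight-matching step. The claim ${\sf Eballotwt}(T)={\sf Ejdtwt}(\sigma(T))$ asks you to prove that, for every edge label $\ell$ at the bottom of a box $\xbox$ in row $i$, the hook traversed by its standardized image during column rectification has exactly $i-\ell+1+\rho_\ell(\xbox)$ boxes (so that the telescoping sum gives $t_{\dist(\xbox)}-t_{\dist(\xbox)+i-\ell+1+\rho_\ell(\xbox)}$). Your heuristic---``siblings exit via (J4') first, then the tracked label slides horizontally once per copy of $\ell$ to the right''---does not account for the vertical segment of the hook: why exactly $i-\ell$ vertical steps? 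This comes from the ballot and not-too-high conditions forcing a specific arrangement of smaller labels above $\xbox$ at the moment the edge label begins to move, and pinning that down precisely requires essentially the same inductive control over semistandard slides that \cite{Thomas.Yong} builds. Your proposed reading order (box labels before edge labels, row by row) is also not obviously the one that makes $\sigma$ intertwine the two rectifications; the correct choice must be checked against the semistandard slide rules, which this chapter deliberately omits.

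In short: your second paragraph identifies the approach the paper actually describes; your first approach, as stated, presupposes what it aims to prove and leaves the key hook-length identity at the level of an intuition.
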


\begin{example}\label{ex:eqLRcountLat} In Example \ref{ex:eqLRcountTab}, we saw 
 $C_{(2,2),(2,1)}^{(3,3)}=(t_2-t_5)+(t_1-t_4)$.
Below has $x\in\Lambda$ is filled with ${\dist}(\xbox)$. 
\[\tableauS{{2 }&{3 }&{ 4 }\\{1 }&{2 }&{ 3 }}\]

We see $T_1,T_2$ below are those $T\in {\eqssyt}((3,3)/(2,2))$
of content $(2,1)$ that are lattice and have no label which is too high.
\[
\begin{picture}(200,50)
\put(0,25){$T_1=\tableauL{{\ }&{\ }&{ 1 }\\{\ }&{ \ }&{2}}$}
\put(33,3){$1$}
\put(125,25){$T_2=\tableauL{{\ }&{\ }&{ 1 }\\{\ }&{ \ }&{2}}$}
\put(180,3){$1$}
\end{picture}
\]

Thus we see as Theorem \ref{thm:mainequiv} states, since for both cases $p_\ell(\xbox)=p_1(\xbox)=1$, 
\begin{align*}
    C_{(2,2),(2,1)}^{(3,3)}&={\sf Eballotwt}(T_1)+{\sf Eballotwt}(T_2)\\
    &=
\factor(1,(2,1))+\factor(1,(2,2))\\
&=(t_1-t_{1+2-1+1+1})+(t_2-t_{2+2-1+1+1})\\
&=(t_1-t_4)+(t_2-t_5),
\end{align*}
so this rule agrees.
\end{example}

\section{Nonvanishing of Littlewood-Richardson coefficients, Saturation and Horn inequalities}\label{sec:Horn}

\begin{center}
\emph{For which triples of partitions $(\lambda,\mu,\nu)$ does $c_{\lambda,\mu}^{\nu}\neq 0$?}
\end{center} 

The importance of this question
comes from a striking equivalence to a 19th century question about linear algebra. This equivalence was first suggested as a question by R.~C.~Thompson
in the 1970s; see R.~Bhatia's survey \cite[pg.~308]{Bhatia} 
(another survey on this topic is W.~Fulton's \cite{Fultona}). Suppose $A,B,C$ are three $r\times r$ Hermitian matrices and $\lambda,\mu,\nu\in {\mathbb R}^r$ are the respective
lists of eigenvalues (written in decreasing order). The \emph{eigenvalue problem for Hermitian matrices} asks
\begin{center}
\emph{Which eigenvalues $(\lambda,\mu,\nu)$ can occur if $A+B=C$?}
\end{center}
After work of H.~Weyl, K.~Fan, V.~B.~Lidskii-H.~Weilandt and others, A.~Horn recursively defined a list of inequalities on triples $(\lambda,\mu,\nu) \in {\mathbb R}^{3r}$.
He conjectured that these give a complete solution to the eigenvalue problem \cite{Horn}.  That these inequalities (or equivalent ones) are necessary
has been proved by several authors, including B.~Totaro \cite{Totaro} and A.~Klyachko \cite{Klyachko}. A.~Klyachko also established that his list of inequalities is sufficient, giving the first solution to the eigenvalue problem. 

Also, A.~Kylachko showed that his inequalities give an asymptotic solution to the problem of which Littlewood-Richardson coefficients $c_{\lambda,\mu}^{\nu}$ are nonzero.  That is, suppose $\lambda,\mu,\nu$ are partitions with at most $r$ parts.  A.~Klyachko proved that if $c_{\lambda,\mu}^{\nu}\neq 0$, then
$(\lambda,\mu,\nu)\in {\mathbb Z}_{\geq 0}^{3r}$ satisfies his inequalities. Conversely, he showed that if
$(\lambda,\mu,\nu)\in {\mathbb Z}_{\geq 0}^{3r}$ satisfy his inequalities then
$c_{N\lambda,N\mu}^{N\nu}\neq 0$ for some $N\in {\mathbb N}$.  (Here, $N\lambda$ is the partition with each part of $\lambda$ stretched by a factor of $N$.)
Subsequently, A.~Knutson-T.~Tao \cite{Knutson.Tao:99} sharpened the last statement, and established:
\begin{theorem}[Saturation theorem]\label{thm:usualsat} $c_{\lambda,\mu}^{\nu}\neq 0$ if and only if
$c_{N\lambda,N\mu}^{N\nu}\neq 0$ for any $N\in {\mathbb N}$.  
\end{theorem}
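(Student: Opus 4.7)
The plan is to use the polytopal rule (Theorem~\ref{thm:polytopalLR}) as the pivotal tool. The easy direction $(\Rightarrow)$ is immediate: if $c_{\lambda,\mu}^{\nu}\neq 0$, then by Theorem~\ref{thm:polytopalLR} there is a lattice point $(r_k^i)\in {\mathcal P}_{\lambda,\mu}^{\nu}$. The inequalities (A)--(E) are linear and homogeneous in the combined data $(\lambda,\mu,\nu,r)$, so $(Nr_k^i)$ is a lattice point in ${\mathcal P}_{N\lambda,N\mu}^{N\nu}$, yielding $c_{N\lambda,N\mu}^{N\nu}\neq 0$. (This direction also follows by iterating Corollary~\ref{cor:LRsemigroup}.)

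For the substantive direction $(\Leftarrow)$, I would first observe that a lattice point $(\widetilde{r}_k^i)\in {\mathcal P}_{N\lambda,N\mu}^{N\nu}$ (guaranteed by Theorem~\ref{thm:polytopalLR}) scales by $1/N$ to a rational point in ${\mathcal P}_{\lambda,\mu}^{\nu}$, so this polytope is nonempty. The remaining task is to exhibit an \emph{integer} lattice point in it. The polytope ${\mathcal P}_{\lambda,\mu}^{\nu}$ is not integral in general, so rounding the rational point is not an option; one needs a genuine saturation statement.

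My plan for this step is to transport the problem to A.~Knutson--T.~Tao's honeycomb (equivalently, hive) model, which is connected to ${\mathcal P}_{\lambda,\mu}^{\nu}$ by an integral piecewise-linear bijection taking lattice points to lattice points. In the honeycomb picture, one takes a rational honeycomb $h$ with boundary prescribed by $(\lambda,\mu,\nu)$ and continuously deforms $h$ along its internal edges while preserving the boundary until the configuration becomes \emph{rigid}. One then establishes the key structural fact that every rigid honeycomb with integer boundary is itself integer. This last claim is proved by a local analysis at each trivalent interior vertex, classifying the admissible ``gentle'' configurations and showing that rigidity forces integer edge coordinates inductively outward from the boundary.

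The main obstacle is exactly the integrality-of-rigid-honeycombs step, which is the combinatorial heart of the argument. The polytopal inequalities (A)--(E) do not themselves expose this integrality; that is the reason one must change models rather than argue directly on ${\mathcal P}_{\lambda,\mu}^{\nu}$. Alternative strategies one could pursue include H.~Derksen--J.~Weyman's semi-invariant/quiver representation approach and P.~Belkale's geometric invariant theory argument, each replacing the honeycomb analysis with a different nontrivial external input; but the cleanest path aligned with the combinatorial apparatus developed here is via honeycombs, with the polytopal rule reducing the theorem to the single assertion that the honeycomb polytope is saturated.
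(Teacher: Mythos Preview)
Your proposal is sound and aligns with the paper's treatment, but note that the paper does \emph{not} supply its own proof of Theorem~\ref{thm:usualsat}. It is stated as a cited result of Knutson--Tao \cite{Knutson.Tao:99}, with the accompanying footnote remarking that the easy direction $(\Rightarrow)$ is an exercise from the Littlewood--Richardson rules of Section~\ref{sec:Young} (exactly your polytopal-scaling argument), that the published proof of the converse is via the honeycomb model, and that Derksen--Weyman \cite{DW} give an alternative via quiver semi-invariants. Your plan thus follows the same route as the cited proof rather than diverging from anything in the paper.

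One clarification: your sentence ``The polytope ${\mathcal P}_{\lambda,\mu}^{\nu}$ is not integral in general'' is correct and is precisely why the converse is deep; the paper's footnote underscores this by saying a proof of $(\Leftarrow)$ directly from the tableau/polytopal rules ``would be surprising.'' So your identification of the integrality-of-rigid-honeycombs step as the genuine obstacle is exactly right, and there is nothing further in the paper against which to compare.
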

Combined with \cite{Klyachko}, it follows that A.~Klyachko's
solution agrees with A.~Horn's conjectured solution.\footnote{The proof in a preprint version of \cite{Knutson.Tao:99} used the polytopal
Littlewood-Richardson rule of Bernstein-Zelevinsky; we refer to the survey of A.~Buch \cite{Buch:after}. The published proof is formulated in terms of the \emph{Honeycomb model}. It is an easy exercise to prove
the ``$\Rightarrow$'' of the equivalence using either of the Littlewood-Richardson rules found in Section~\ref{sec:Young}. A proof of the converse 
using such rules would be surprising. Another solution, due to H.~Derksen-J.~Weyman \cite{DW} was given in the
setting of \emph{semi-invariants of quivers}.}
Let $[r]:=\{1,2,\ldots r\}$.  For any
\[I=\{i_1<i_2< \cdots<i_d\}\subseteq [r]\]
define the partition
\[
 \itop(I):=(i_d-d\geq \cdots\geq i_2-2 \geq i_1-1).
\]
This bijects subsets of $[r]$ of cardinality $d$ with partitions whose Young
diagrams are contained in a $d\times(r-d)$ rectangle. The following combines the main
results of \cite{Klyachko, Knutson.Tao:99}:

\begin{theorem}\label{thm:classicalHorn}(\cite{Klyachko}, \cite{Knutson.Tao:99})
Let $\lambda,\mu,\nu$ be partitions with at most $r$ parts such that
\begin{equation}
\label{eqn:classicalcond}
|\lambda|+|\mu|=|\nu|.
\end{equation}
The following are equivalent:
\begin{enumerate}
\item $c_{\lambda,\mu}^{\nu}\neq 0$.
\item For every $d<r$, and every triple of subsets $I,J,K\subseteq [r]$ of cardinality $d$ such that $c_{\itop(I),
\itop(J)}^{\itop(K)}\neq 0$, we have
\begin{equation}\label{eq:ineq}\sum_{i\in
I}\lambda_i+\sum_{j\in J}\mu_j\geq \sum_{k\in K}\nu_k.\end{equation}
\item There exist $r\times r$ Hermitian matrices $A,B,C$ with eigenvalues $\lambda,\mu,\nu$ such that $A+B=C.$
\end{enumerate}
\end{theorem}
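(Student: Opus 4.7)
The plan is to prove the equivalence via the cycle $(1) \Rightarrow (3) \Rightarrow (2) \Rightarrow (1)$, with the Saturation Theorem (Theorem~\ref{thm:usualsat}) serving as the essential bridge between asymptotic and honest nonvanishing.

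For $(1) \Rightarrow (3)$, I would pass through the representation-theoretic interpretation: $c_{\lambda,\mu}^{\nu} \neq 0$ is equivalent to $V_{\nu}$ appearing in $V_{\lambda} \otimes V_{\mu}$ as ${\sf GL}_r({\mathbb C})$-modules. Classical moment-map machinery (Heckman, Guillemin--Sternberg, Kirwan) identifies the set of such triples, asymptotically after dilation, with the image of the moment map for the diagonal ${\sf U}(r)$-action on the product of coadjoint orbits ${\mathcal O}_{\lambda} \times {\mathcal O}_{\mu} \times {\mathcal O}_{\nu^{\vee}}$. Under the identification of coadjoint orbits with isospectral sets of Hermitian matrices, this image is precisely the eigenvalue cone for $A + B = C$, and Theorem~\ref{thm:usualsat} erases the dilation.

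For $(3) \Rightarrow (2)$, this is the (once-conjectured, now-proved) necessity direction of Horn. Given $A + B = C$ with spectra $\lambda,\mu,\nu$ and $d$-subsets $I,J,K \subseteq [r]$ with $c_{\itop(I),\itop(J)}^{\itop(K)} \neq 0$, Kleiman transversality \cite{Kleiman} applied to the corresponding Schubert problem on ${\sf Gr}_d({\mathbb C}^r)$ furnishes $d$-dimensional subspaces $U,V,W$ in Schubert positions $I,J,K$ with respect to the eigenbases of $A,B,C$, together with an incidence $W \subseteq U+V$ (this is the geometric content of nonvanishing). Combining the resulting trace identity $\operatorname{tr}(A|_U) + \operatorname{tr}(B|_V) \geq \operatorname{tr}(C|_W)$ with Ky Fan's maximum principle --- which gives $\operatorname{tr}(A|_U) \leq \sum_{i \in I}\lambda_i$, and similarly for $B, C$ --- produces the Horn inequality~(\ref{eq:ineq}).

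For $(2) \Rightarrow (1)$, this is the deepest step. Klyachko's original sufficiency argument shows, via a moment-map/GIT analysis, that the system (2) cuts out precisely the rational eigenvalue cone, so $(\lambda,\mu,\nu)$ satisfying (2) admits Hermitian matrices as in (3); combined with the asymptotic equivalence between (3) and the existence of some $N \in {\mathbb N}$ with $c_{N\lambda,N\mu}^{N\nu} \neq 0$, this closes the direction up to saturation. Theorem~\ref{thm:usualsat} then promotes this asymptotic nonvanishing to $c_{\lambda,\mu}^{\nu} \neq 0$. The main obstacle is therefore the Saturation Theorem itself, which I take as input. Every known proof is intricate: the Knutson--Tao \cite{Knutson.Tao:99} argument perturbs a rational honeycomb with integer boundary to one with integer weights, while the Derksen--Weyman \cite{DW} alternative invokes semi-invariants of quivers. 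A direct proof from the tableau/polytopal rules of Section~\ref{sec:Young} --- i.e., showing that ${\mathcal P}_{\lambda,\mu}^{\nu}$ from Theorem~\ref{thm:polytopalLR} contains a lattice point whenever it is nonempty over ${\mathbb Q}$ --- is not currently known and, per the footnote to Theorem~\ref{thm:usualsat}, would be surprising.
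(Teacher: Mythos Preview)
The paper does not actually supply its own proof of Theorem~\ref{thm:classicalHorn}; the statement is presented as a cited consequence of \cite{Klyachko} and \cite{Knutson.Tao:99}, with the surrounding remarks only indicating the logical dependence on the Saturation Theorem (Theorem~\ref{thm:usualsat}) and pointing to Belkale's geometric reproof. So there is no in-paper argument to compare against, and your outline is really a reconstruction of the literature proof. At that level your architecture is the standard one --- Klyachko's convexity for the eigenvalue cone plus Knutson--Tao saturation --- and matches what the paper's Remark~\ref{Horn:converse} alludes to.

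That said, your sketch of $(3)\Rightarrow(2)$ is garbled in a way worth fixing. The nonvanishing $c_{\itop(I),\itop(J)}^{\itop(K)}\neq 0$ does not produce three distinct $d$-planes $U,V,W$ with an incidence $W\subseteq U+V$, nor any identity of the form $\operatorname{tr}(A|_U)+\operatorname{tr}(B|_V)\geq \operatorname{tr}(C|_W)$. The correct mechanism (Totaro \cite{Totaro}, Helmke--Rosenthal, Klyachko) is that the nonzero cup product forces the triple Schubert intersection in ${\sf Gr}_d({\mathbb C}^r)$, taken with respect to the \emph{specific} eigenflags of $A,B,C$, to be nonempty (effectivity, not Kleiman transversality, is what is used here). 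One obtains a \emph{single} $d$-plane $P$ in all three Schubert varieties; compressing $A+B=C$ to $P$ gives the exact trace identity $\operatorname{tr}(A|_P)+\operatorname{tr}(B|_P)=\operatorname{tr}(C|_P)$, and the Schubert incidence conditions together with the min--max (Ky Fan) principle bound each summand by the appropriate partial eigenvalue sum, yielding~(\ref{eq:ineq}). Also, your invocation of saturation in $(1)\Rightarrow(3)$ is unnecessary: a single nonzero $c_{\lambda,\mu}^{\nu}$ already places $(\lambda,\mu,\nu)$ in the moment polytope without any dilation argument. Saturation is only genuinely needed, as you correctly note, to close $(2)\Rightarrow(1)$.
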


\begin{remark}\label{Horn:converse}
The logic of the proof of Theorem~\ref{thm:classicalHorn} in \cite{Knutson.Tao:99} is to show Theorem~\ref{thm:usualsat}. 
In fact, Theorem~\ref{thm:usualsat} also follows from the equivalence (1)$\iff$(2) of 
Theorem~\ref{thm:classicalHorn}. This is since the Horn inequalities from (2) are homogeneous. This point seems to have been first noted
in  P.~Belkale's \cite{Belkale} which moreover gives a geometric proof of Theorem~\ref{thm:classicalHorn}. \qed
\end{remark}

\begin{remark}\label{remark:LRsemigroup}
The equivalence (1)$\iff$(2) immediately implies the semigroup property of ${\sf LR}_r$ (Corollary~\ref{cor:LRsemigroup}). \qed
\end{remark}

\begin{remark}
\label{remark:minimal}
P.~Belkale's doctoral thesis \cite{Belkale:thesis} (published in \cite{Belkale:thesispub}) shows that a much smaller list of inequalities than
those in Theorem~\ref{thm:classicalHorn}(2) suffice. Namely, replace the condition ``$c_{\tau(\lambda),\tau(\mu)}^{\tau(\nu)}\neq 0$''
with ``$c_{\tau(\lambda),\tau(\mu)}^{\tau(\nu)}\neq 1$''. A.~Knutson-T.~Tao-C.~Woodward \cite{KTW:JAMSII} showed that
the inequalities in this shorter list are minimal, \emph{i.e.}, none can be dispensed with.  \qed
\end{remark}

\begin{remark}
\label{remark:LRfg} Theorem~\ref{thm:classicalHorn} gives a different proof that ${\sf LR}_r$ is finitely generated as a semigroup.
We will give the argument in the proof of Proposition~\ref{prop:EqLRfinite}, below.
\end{remark}

While Theorem~\ref{thm:classicalHorn} characterizes nonvanishing of $c_{\lambda,\mu}^{\nu}$, the inequalities
are recursive and non-transparent to work with. The Littlewood-Richardson rules of Section~\ref{sec:Young}
require one to search for a valid tableau in a possibly large search space. 
K.~Purbhoo \cite{Purbhoo:root} (see also \cite{Purbhoo:rootgeneral}) developed a general and intriguing \emph{root game}, which
in the case of Grassmannians can be ``won'' if and only if $c_{\lambda,\mu}^{\nu}\neq 0$. 

Theorem~\ref{thm:usualsat} permits a determination of the
\emph{formal} computational
complexity of the nonvanishing decision problem ``$c_{\lambda,\mu}^{\nu}\neq 0$'' (in the bit length of the input $(\lambda,\mu,\nu)$, where 
one assumes arithmetic operations take constant time). This was resolved independently by T.~McAllister-J.~De Loera \cite{DM}
and K.~D.~Mulmuley-H.~Narayanan-M.~Sohoni, \cite{Mulmuley}, by a neat argument that combines Theorem~\ref{thm:usualsat} with celebrated
developments in linear programming:

\begin{theorem}
\label{thm:LRinP}
The decision problem of determining if $c_{\lambda,\mu}^{\nu}\neq 0$ is in the class ${\sf P}$ of polynomial problems.
\end{theorem}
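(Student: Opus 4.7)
The plan combines three ingredients: the polytopal Littlewood-Richardson rule (Theorem~\ref{thm:polytopalLR}), the saturation theorem (Theorem~\ref{thm:usualsat}), and the fact that rational linear programming feasibility lies in $\sf P$ (Khachiyan's ellipsoid method, or Karmarkar's interior-point method).

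First, I would observe that the polytope $\mathcal{P}_{\lambda,\mu}^{\nu}\subseteq \mathbb{R}^{\ell(\nu)\cdot\ell(\mu)}$ is defined by the inequalities (A)--(E), whose numerical data consists of the partition parts $\lambda_i,\mu_k,\nu_i$. Hence $\mathcal{P}_{\lambda,\mu}^{\nu}$ admits a description of bit-size polynomial in the bit-length of the input $(\lambda,\mu,\nu)$. Moreover, inspection of (A)--(E) shows that replacing $(\lambda,\mu,\nu)$ by $(N\lambda,N\mu,N\nu)$ scales the right-hand sides by $N$, so that
\[
\mathcal{P}_{N\lambda,N\mu}^{N\nu}=N\cdot\mathcal{P}_{\lambda,\mu}^{\nu}.
\]

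Second, I would chain the three ingredients. By Theorem~\ref{thm:polytopalLR}, $c_{\lambda,\mu}^{\nu}\neq 0$ iff $\mathcal{P}_{\lambda,\mu}^{\nu}$ contains a lattice point, and similarly $c_{N\lambda,N\mu}^{N\nu}\neq 0$ iff $N\cdot\mathcal{P}_{\lambda,\mu}^{\nu}$ contains a lattice point. Equivalently, the latter holds iff $\mathcal{P}_{\lambda,\mu}^{\nu}$ contains a rational point with denominators dividing $N$. Taking $N$ to be a common denominator of the coordinates of any rational point, we see that $\mathcal{P}_{\lambda,\mu}^{\nu}$ contains a rational point iff $c_{N\lambda,N\mu}^{N\nu}\neq 0$ for some $N\in\mathbb{N}$. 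By Theorem~\ref{thm:usualsat} (saturation), this is equivalent to $c_{\lambda,\mu}^{\nu}\neq 0$. Finally, since $\mathcal{P}_{\lambda,\mu}^{\nu}$ is cut out by rational inequalities, it is nonempty over $\mathbb{R}$ iff it is nonempty over $\mathbb{Q}$.

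Third, I would reduce the decision problem to linear programming feasibility: given the explicit rational description of $\mathcal{P}_{\lambda,\mu}^{\nu}$ from (A)--(E), decide whether it is nonempty. Khachiyan's ellipsoid algorithm answers this question in time polynomial in the bit-length of the LP's coefficients, which as noted above is polynomial in the bit-length of $(\lambda,\mu,\nu)$. This places the nonvanishing decision in $\sf P$. The most delicate step is the logical conversion in the second paragraph: the saturation theorem is exactly what is needed to bridge the gap between the integer-point count of the polytopal rule and the real feasibility test used by the LP solver. Without Theorem~\ref{thm:usualsat} one would instead face an integer programming problem, which is ${\sf NP}$-hard in general.
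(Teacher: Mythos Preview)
Your argument is correct and follows essentially the same route as the paper: combine the polytopal rule, the dilation identity $\mathcal{P}_{N\lambda,N\mu}^{N\nu}=N\cdot\mathcal{P}_{\lambda,\mu}^{\nu}$, and saturation to reduce nonvanishing to LP feasibility, then invoke the ellipsoid method. The paper's proof adds only one extra remark you omit, namely that because the constraint matrix has entries in $\{-1,0,1\}$ one can in fact appeal to Tardos' algorithm for a \emph{strongly} polynomial bound.
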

\begin{proof}
By Theorem~\ref{thm:polytopalLR}, $c_{\lambda,\mu}^{\nu}\neq 0$ if and only if the polytope ${\mathcal P}_{\lambda,\mu}^{\nu}$ has a lattice point.
Clearly, if ${\mathcal P}_{\lambda,\mu}^{\nu}\neq \emptyset$, it has a rational vertex. In this case, a dilation $N{\mathcal P}_{\lambda,\mu}^{\nu}$ contains a lattice point. One checks from the definitions that
$N{\mathcal P}_{\lambda,\mu}^{\nu}={\mathcal P}_{N\lambda,N\mu}^{N\nu}$,
which means $c_{N\lambda,N\mu}^{N\nu}\neq 0$. Thus, by Theorem~\ref{thm:usualsat},
\[c_{\lambda,\mu}^{\nu}\neq 0\iff c_{N\lambda,N\mu}^{N\nu}\neq 0 \iff
{\mathcal P}_{\lambda,\mu}^{\nu}\neq \emptyset.\] 
To determine if ${\mathcal P}_{\lambda,\mu}^{\nu}\neq \emptyset$, one 
needs to decide feasiblity of any linear programming problem involving ${\mathcal P}_{\lambda,\mu}^{\nu}$. One appeals to  ellipsoid/interior point methods for polynomiality.\footnote{The Klee-Minty cube shows that the practically efficient simplex method has
exponential worst-case complexity.} Actually,
our inequalities are of the form
$A{\bf x}\leq {\bf b}$ where the vector ${\bf b}$ is integral and
the entries of $A$ are from $\{-1,0,1\}$.
Hence our polytope is \emph{combinatorial} and so one achieves a
strongly polynomial time complexity using \'E.~Tardos' algorithm; see
\cite{anothertardos, Tardos}. 
\end{proof}

In contrast, H.~Narayanan \cite{Narayanan} proved that counting $c_{\lambda,\mu}^{\nu}$ is a $\#${\sf P}-complete problem in L.~Valiant's complexity theory
of counting problems \cite{Valiant}. In particular, this means that no polynomial time algorithm for computing $c_{\lambda,\mu}^{\nu}$ can exist unless
${\sf P}={\sf NP}$ (it is widely expected that ${\sf P}\neq {\sf NP}$). It is curious that the counting problem is (presumably) hard, whereas the nonzeroness version is polynomial time. This already
occurs for the original $\#{\sf P}$-complete problem from \cite{Valiant}, \emph{i.e.}, to compute the permanent of an $n\times n$ matrix $M=(m_{ij})$ where
$m_{ij}\in \{0,1\}$. Now, determining if ${\rm per}(M)>0$ is equivalent to deciding the existence of matching in a bipartite graph that has incidence matrix $M$;
the algorithm of J.~Edmonds-R.~Karp provides the polynomial-time algorithm. 

\section{Equivariant nonvanishing, saturation, and Friedland's inequalities}\label{sec:Friedland}

We now turn to the equivariant analogues of results from the previous section.

\begin{center}
\emph{For which triples of partitions $(\lambda,\mu,\nu)$ does $C_{\lambda,\mu}^{\nu}\neq 0$?}
\end{center} 

In \cite{ARY} it was shown that this Schubert calculus question is also essentially equivalent to an eigenvalue problem. Recall that a Hermitian matrix $M$ \emph{majorizes} another Hermitian matrix $M'$ if $M-M'$ is positive semidefinite (its eigenvalues are all nonnegative).  In this case, we write $M\geq M'$.  S.~Friedland \cite{Friedland} studied the following question:
\begin{equation}\nonumber
\mbox{\emph{Which eigenvalues $(\lambda,\mu,\nu)$ can occur if $A+B \geq C$?}}
\end{equation}
His solution, given as linear inequalities, includes Klyachko's inequalities, a trace
inequality and some extra inequalities.  Later, W.~Fulton \cite{Fulton} proved the extra
inequalities are unnecessary, leading to a natural extension of the equivalence (2)$\iff$(3) of Theorem~\ref{thm:classicalHorn}.

One would like an extension of the equivalence with (1) of Theorem~\ref{thm:classicalHorn} as well. Now, D.~Anderson, E.~Richmond and the third author \cite{ARY} proved:

\begin{theorem}[Equivariant saturation]
\label{claim:mainARY}
$C_{\lambda,\mu}^{\nu}\neq 0$ if and only if $C_{N\lambda,N\mu}^{N\nu}\neq 0$ for any $N\in {\mathbb N}$.\footnote{In our notation, $C_{\lambda,\mu}^{\nu}$
depends on the indices $k$ and $n$ of the Grassmannian ${\sf Gr}_{k}({\mathbb C}^n)$. However, the edge labeled rule of Theorem~\ref{thm:mainequiv} has the property that
$C_{\lambda,\mu}^{\nu}$ is that for fixed $k$, the polynomial is independent of the choice of $n$ provided both 
are sufficiently large so that $\lambda,\mu,\nu\subseteq k\times (n-k)$. Indeed, the coefficients $C_{\lambda,\mu}^{\nu}$ for such values are the structure constants for
the Schubert basis in the graded inverse limit of equivariant cohomology rings under the standard embedding $\iota: {\sf Gr}_k({\mathbb C}^n)\hookrightarrow {\sf Gr}_{k}({\mathbb C}^{n+1})$; see \cite[Section~1.2]{ARY}.}
\end{theorem}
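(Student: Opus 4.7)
The plan is to adapt the polytopal argument behind Theorem~\ref{thm:LRinP} to the equivariant setting. First, by the edge-labeled ballot rule (Theorem~\ref{thm:ty}) together with W.~Graham's positivity (Theorem~\ref{thm:Graham}), the nonvanishing of $C_{\lambda,\mu}^\nu$ is equivalent to a combinatorial existence problem. Indeed, for each $T$ contributing to Theorem~\ref{thm:ty}, every factor $\factor(\ell,x) = t_p - t_q$ has $p < q$ and hence equals $\beta_p + \beta_{p+1} + \cdots + \beta_{q-1}$, so ${\sf Eballotwt}(T)$ is a nonzero element of $\mathbb{Z}_{\geq 0}[\beta_1,\ldots,\beta_{n-1}]$. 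Since the summands cannot cancel, $C_{\lambda,\mu}^\nu \neq 0$ if and only if at least one lattice $T\in\eqssyt(\nu/\lambda)$ of content $\mu$ with no label too high exists.

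Next, I would encode this existence problem as lattice-point existence in a rational polytope $\mathcal{E}_{\lambda,\mu}^\nu$ refining the polytope $\mathcal{P}_{\lambda,\mu}^\nu$ of Theorem~\ref{thm:polytopalLR}. Retain the variables $r_k^i$ counting box-labels $k$ in row $i$ and adjoin indicator variables $e_k^i\in\{0,1\}$ recording whether the horizontal edge below row $i$ carries the label $k$. The shape, content, semistandardness, lattice, and ``not too high'' conditions all translate into linear (in)equalities whose right-hand sides are homogeneous of degree one in $(\lambda,\mu,\nu)$. Consequently $\mathcal{E}_{N\lambda,N\mu}^{N\nu} = N\cdot \mathcal{E}_{\lambda,\mu}^\nu$ as polytopes.

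The forward implication of the theorem is then immediate: scaling an integer point of $\mathcal{E}_{\lambda,\mu}^\nu$ by $N$ produces one of $\mathcal{E}_{N\lambda,N\mu}^{N\nu}$. For the converse, an integer point of $\mathcal{E}_{N\lambda,N\mu}^{N\nu}$ divided by $N$ is a rational point of $\mathcal{E}_{\lambda,\mu}^\nu$, which is therefore nonempty. The heart of the argument is to upgrade nonemptiness to the existence of an integer point. I would do this by reducing to classical saturation (Theorem~\ref{thm:usualsat}): ``split open'' each edge carrying labels into new rows to produce an enlarged partition $\nu^+ \supseteq \nu$ with $|\nu^+| = |\lambda|+|\mu|$, realizing $\mathcal{E}_{\lambda,\mu}^\nu$ as a nonempty affine slice of the classical polytope $\mathcal{P}_{\lambda,\mu}^{\nu^+}$. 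By Theorems~\ref{thm:polytopalLR} and~\ref{thm:usualsat}, nonemptiness of $\mathcal{P}_{\lambda,\mu}^{\nu^+}$ forces it to contain an integer point (equivalently, $c_{\lambda,\mu}^{\nu^+}\neq 0$); ``collapsing'' the split rows back to edge labels then yields the sought integer point of $\mathcal{E}_{\lambda,\mu}^\nu$.

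The main obstacle will be making the split/collapse reduction precise: one must verify that it commutes with $N$-dilation and that the ``not too high'' constraint, absent in the classical setting, is captured by an appropriate constraint on the shape $\nu^+$. A further subtlety is that different splittings produce different $\nu^+$, and one must exhibit at least one choice whose classical lattice tableau collapses back into $\mathcal{E}_{\lambda,\mu}^\nu$. This combinatorial bookkeeping is where the bulk of the work lies; once it is controlled, the equivalence $C_{\lambda,\mu}^\nu\neq 0\iff c_{\lambda,\mu}^{\nu^+}\neq 0$ (for some admissible $\nu^+$) reduces the equivariant saturation theorem directly to Theorem~\ref{thm:usualsat}.
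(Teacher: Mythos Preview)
Your proposal has a genuine gap at the dilation step. You introduce indicator variables $e_k^i\in\{0,1\}$ for edge labels and then assert $\mathcal{E}_{N\lambda,N\mu}^{N\nu}=N\cdot\mathcal{E}_{\lambda,\mu}^{\nu}$. But the constraint $e_k^i\le 1$ has right-hand side~$1$, which is degree~$0$ in $(\lambda,\mu,\nu)$, not degree~$1$; it does not scale. Concretely, if $(r,e)$ is a lattice point of $\mathcal{E}_{\lambda,\mu}^{\nu}$ with some $e_k^i=1$, then $N\cdot(r,e)$ has that coordinate equal to $N$, which violates the bound $e_k^i\le 1$ in $\mathcal{E}_{N\lambda,N\mu}^{N\nu}$ (an edge in a semistandard edge-labeled tableau still carries each label at most once, regardless of how large $N\mu$ is). So even the ``easy'' forward direction fails in your framework. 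If instead you drop the bound and allow $e_k^i\ge 0$ unrestricted, then lattice points no longer correspond to admissible tableaux, and the equivalence ``$C_{\lambda,\mu}^{\nu}\neq 0\iff\mathcal{E}_{\lambda,\mu}^{\nu}$ has a lattice point'' is lost. Either way the argument breaks before you reach the split/collapse step, which you already flag as incomplete.

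The paper's route avoids this entirely. It does not prove saturation polytopally; rather, it first establishes the equivalence $C_{\lambda,\mu}^{\nu}\neq 0\iff(\lambda,\mu,\nu)$ satisfies the Horn inequalities (Theorem~\ref{thm:equivHorn}(1)$\iff$(2)), using two tableau-level facts proved from Theorem~\ref{thm:ty}: a ``going down'' claim (if $C_{\lambda,\mu}^{\nu}\neq 0$ one may shrink $\mu$ to $\mu^{\downarrow}$ with $|\lambda|+|\mu^{\downarrow}|=|\nu|$ and $c_{\lambda,\mu^{\downarrow}}^{\nu}\neq 0$) and a ``going up'' claim (nonvanishing persists when enlarging $\mu$ inside $\nu$). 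Saturation then falls out because the Horn inequalities are homogeneous. Note also that the polytope $\mathcal{Q}_{\lambda,\mu}^{\nu}$ appearing later in the chapter (Theorem~\ref{thm:equivLRinP}) is used in the opposite direction: its analysis \emph{consumes} equivariant saturation as an input rather than proving it, so you cannot simply borrow that construction here without circularity.
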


Actually, Theorem~\ref{claim:mainARY} is proved by establishing the the equivalence (1)$\iff$(2) below.

\begin{theorem}[\cite{ARY}, \cite{Friedland}, \cite{Fulton}]\label{thm:equivHorn}Let $\lambda,\mu,\nu$ be partitions with at most $r$ parts such that
\begin{equation}
\label{eqn:EqHorncond}
|\lambda|+|\mu|\geq |\nu| \mbox{\ and  \ }\max\{\lambda_i,\mu_i\}\leq \nu_i \mbox{\ for all $i\leq r$.}
\end{equation}
The following are equivalent:
\begin{enumerate}
\item $C_{\lambda,\mu}^{\nu}\neq 0$.
\item For every $d<r$, and every triple of subsets $I,J,K\subseteq [r]$ of cardinality $d$ such that $c_{\itop(I),
\itop(J)}^{\itop(K)}\neq 0$, we have $$\sum_{i\in I}\lambda_i+\sum_{j\in J}\mu_j\geq \sum_{k\in
K}\nu_k.$$

\item There exist $r\times r$ Hermitian matrices $A,B,C$ with eigenvalues $\lambda,\mu,\nu$ such that $A+B\geq C$.
\end{enumerate}
\end{theorem}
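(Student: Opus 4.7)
I structure the proof as two equivalences: $(2)\iff(3)$ is the classical eigenvalue-inequality equivalence, while $(1)\iff(2)$ is the Schubert-calculus equivalence that is the main novelty. I treat them in turn and then deduce Theorem~\ref{claim:mainARY}.

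The equivalence $(2)\iff(3)$ is the combined work of S.~Friedland \cite{Friedland} and W.~Fulton \cite{Fulton}. Friedland proved that $(3)$ is equivalent to the validity of a system of inequalities comprising those in $(2)$ together with an auxiliary trace inequality; Fulton subsequently showed the trace inequality is implied by the inequalities in $(2)$, giving the clean equivalence. Granting this, Theorem~\ref{claim:mainARY} will follow from $(1)\iff(2)$ by homogeneity: both the conditions in (\ref{eqn:EqHorncond}) and the Horn inequalities in $(2)$ scale linearly in $(\lambda,\mu,\nu)$, so their validity is preserved under any integer dilation, exactly as in Remark~\ref{Horn:converse}.

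For the direction $(1)\Rightarrow(2)$: suppose $C^{\nu}_{\lambda,\mu}\neq 0$. The constraints in (\ref{eqn:EqHorncond}) come for free: $|\lambda|+|\mu|\geq|\nu|$ is the degree bound, while Theorem~\ref{thm:ty} provides an edge-labeled tableau of shape $\nu/\lambda$ with content $\mu$, forcing $\lambda\subseteq\nu$, and commutativity of the cup product gives $\mu\subseteq\nu$. For each Horn inequality, my plan is to lift to the classical setting: take an edge-labeled tableau $T$ certifying $C^{\nu}_{\lambda,\mu}\neq 0$ via Theorem~\ref{thm:mainequiv}, and ``promote'' every edge label to an ordinary box, producing a classical semistandard tableau of shape $\widehat\nu/\lambda$ and content $\mu$ for some $\widehat\nu\supseteq\nu$ with $|\widehat\nu|=|\lambda|+|\mu|$. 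This yields $c^{\widehat\nu}_{\lambda,\mu}\neq 0$, so the classical Horn inequalities from Theorem~\ref{thm:classicalHorn} apply to $(\lambda,\mu,\widehat\nu)$ and give
\[
\sum_{i\in I}\lambda_i+\sum_{j\in J}\mu_j\;\geq\;\sum_{k\in K}\widehat\nu_k\;\geq\;\sum_{k\in K}\nu_k,
\]
using $\widehat\nu_k\geq\nu_k$ in the last step.

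For the direction $(2)\Rightarrow(1)$: assume (\ref{eqn:EqHorncond}) and all Horn inequalities. I would first produce a partition $\widehat\nu\supseteq\nu$ with $|\widehat\nu|=|\lambda|+|\mu|$ such that the classical Horn inequalities hold for $(\lambda,\mu,\widehat\nu)$. This is done by adding the $|\lambda|+|\mu|-|\nu|$ excess boxes to $\nu$ one at a time, selecting rows in an order that preserves each classical inequality; the slack furnished jointly by the equivariant Horn inequalities and by $\max(\lambda_i,\mu_i)\leq\nu_i$ is precisely what makes this possible. By Theorem~\ref{thm:classicalHorn}, $c^{\widehat\nu}_{\lambda,\mu}\neq 0$. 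I then select a classical ballot tableau $T$ on $\widehat\nu/\lambda$ of content $\mu$ certifying this, and ``demote'' the entries of $T$ lying in $\widehat\nu\setminus\nu$ onto the horizontal edges of $\nu$ to produce an edge-labeled tableau $T^{\rm eq}$ on $\nu/\lambda$ of content $\mu$. Verifying $T^{\rm eq}$ is lattice and has no label which is too high, Theorem~\ref{thm:ty} contributes a strictly positive monomial in the $\beta_i$'s to $C^{\nu}_{\lambda,\mu}$, so $C^{\nu}_{\lambda,\mu}\neq 0$ by equivariant positivity (Theorem~\ref{thm:Graham}).

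The main obstacle in both directions of $(1)\iff(2)$ is the promotion/demotion between classical tableaux on $\widehat\nu/\lambda$ and edge-labeled tableaux on $\nu/\lambda$. The promotion in $(1)\Rightarrow(2)$ must be checked to preserve semistandardness and the ballot condition. The demotion in $(2)\Rightarrow(1)$ is more delicate: a generic classical LR tableau need not yield a valid edge-labeled tableau, so one must use the freedom in choosing both the lift $\widehat\nu$ and the specific classical tableau $T$ to arrange a successful collapse. I expect the ballot rule of Theorem~\ref{thm:ty}, with its flexible semistandard formulation, to be essential; the rigid standard version of Theorem~\ref{thm:mainequiv} appears insufficient for this step.
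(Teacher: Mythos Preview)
Your handling of $(2)\iff(3)$ and the deduction of Theorem~\ref{claim:mainARY} from $(1)\iff(2)$ by homogeneity match the paper exactly.

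For $(1)\iff(2)$, however, the paper takes a genuinely different route: rather than modifying $\nu$, it modifies $\lambda$ (or $\mu$) via two monotonicity claims proved from the ballot rule (Theorem~\ref{thm:ty}). For $(1)\Rightarrow(2)$ the paper uses a \emph{going-down} claim: if $C_{\lambda,\mu}^{\nu}\neq 0$ with $|\nu|<|\lambda|+|\mu|$, then there is $\mu^{\downarrow}\subsetneq\mu$ with $C_{\lambda,\mu^{\downarrow}}^{\nu}\neq 0$. Iterating (and using commutativity) produces $\lambda^{\downarrow}\subseteq\lambda$ with $|\lambda^{\downarrow}|+|\mu|=|\nu|$ and $c_{\lambda^{\downarrow},\mu}^{\nu}\neq 0$; classical Horn then gives $\sum_{I}\lambda_i^{\downarrow}+\sum_{J}\mu_j\geq\sum_{K}\nu_k$, and $\lambda_i\geq\lambda_i^{\downarrow}$ finishes. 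For $(2)\Rightarrow(1)$ the paper uses a \emph{going-up} claim: if $C_{\lambda,\mu}^{\nu}\neq 0$ and $\mu\subset\mu^{\uparrow}\subseteq\nu$, then $C_{\lambda,\mu^{\uparrow}}^{\nu}\neq 0$. The virtue of this approach is that shrinking or growing $\mu$ corresponds at the tableau level simply to deleting or inserting labels, with the shape $\nu/\lambda$ held fixed; this is far easier to control than your edge-to-box promotion and box-to-edge demotion, which must simultaneously change the shape and preserve semistandardness, ballotness, and the not-too-high condition.

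Your promotion step in $(1)\Rightarrow(2)$ is probably salvageable, but it is not as innocent as it looks: when several edge labels sit on the same edge, or when $\nu_{i+1}$ already equals $\nu_i$, naively pushing an edge label from the bottom of row $i$ into a new box of row $i+1$ can violate the partition shape or column-strictness, so a global reorganization is needed. The demotion step in $(2)\Rightarrow(1)$ is the more serious gap, and you correctly flag it: an arbitrary classical LR tableau on $\widehat\nu/\lambda$ need not collapse to a valid edge-labeled tableau on $\nu/\lambda$, and you give no mechanism for choosing $\widehat\nu$ and $T$ to guarantee this. The paper's going-up claim circumvents this entirely by never leaving the edge-labeled world.
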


Theorem \ref{thm:equivHorn} states that the main inequalities controlling nonvanishing of $C_{\lambda,\mu}^{\nu}$ are just Horn's inequalities  (\ref{eq:ineq}).  

\begin{Remark}\label{remark:secondcond} The second condition in (\ref{eqn:EqHorncond}) is
unnecessary in Theorem~\ref{thm:classicalHorn} since it is already implied by
(\ref{eqn:classicalcond}) combined with \eqref{eq:ineq}. The condition is not required for the equivalence (2)$\iff$(3). However it is needed
for the equivalence with (1). For example, the $1\times 1$ matrices $A=[1], B=[1], C=[0]$ satisfy $A+B\geq C$, but $C_{(1),(1)}^{(0)}=0$.
This is not in contradiction with Theorem~\ref{thm:equivHorn} since $\max\{1,1\}\leq 0$ is violated.\qed
\end{Remark}

Just as in Remark~\ref{Horn:converse}, the equivalence (1)$\iff$(2) and the homogeneity of the Friedland-Fulton inequalities
implies Theorem~\ref{claim:mainARY}. On the other hand the equivalence (1)$\iff$(2) relies on the classical Horn theorem (Theorem~\ref{thm:classicalHorn})
and the edge labeled ballot tableau rule Theorem~\ref{thm:ty}. To give the reader a sense of the proof we now provide:

\noindent
\emph{Sketch of proof that (1)$\implies$(2):} Using Theorem~\ref{thm:ty}, it is an exercise to show that 
\begin{claim}\label{claim:goingdown} If $C_{\lambda,\mu}^{\nu}\neq 0$ and $|\nu|<|\lambda|+|\mu|$, then for any $s$ such that $|\nu|-|\lambda|\leq s<|\mu|$, there is a
$\mu^{\downarrow}\subset \mu$ with $|\mu^{\downarrow}|=s$ and $C_{\lambda,\mu^{\downarrow}}^{\nu}\neq 0$.
\end{claim}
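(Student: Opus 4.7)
The plan is to argue by induction on $|\mu|-s$. The base case $|\mu|=s$ is vacuous, so for the inductive step it suffices to prove the one-step reduction: if $C_{\lambda,\mu}^{\nu}\neq 0$ and $|\mu|>|\nu/\lambda|$, then there is a partition $\mu^-\subsetneq\mu$ of size $|\mu|-1$ with $C_{\lambda,\mu^-}^{\nu}\neq 0$. Iterating this $|\mu|-s$ times produces the desired $\mu^{\downarrow}$ of size $s$, which is still $\geq |\nu/\lambda|$.

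By Theorem~\ref{thm:ty}, fix a witness $T\in\eqssyt(\nu/\lambda)$ of content $\mu$: lattice, with no too-high label, and with $\mathsf{Eballotwt}(T)\neq 0$. The number of edge labels of $T$ equals $|\mu|-|\nu/\lambda|>0$, so $T$ carries at least one edge label. The strategy is to delete one edge label of $T$ and argue that the resulting tableau $T^-$ still witnesses the conditions of Theorem~\ref{thm:ty} for the shape $\nu/\lambda$ and a reduced partition content. Concretely, I would select an edge label of value $k$ at column $c_0$ satisfying (i) $\mu_k>\mu_{k+1}$, so $\mu^-:=\mu-\mathbf{e}_k$ remains a partition, and (ii) $r_c(k)>r_c(k+1)$ for every $c\leq c_0$, where $r_c(\ell)$ denotes the number of $\ell$'s weakly right of column $c$, so the decrement preserves the lattice inequality at value $k$. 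The remaining conditions transfer to $T^-$ for free: the "no too-high" property is stable under deletion, and the nonvanishing of $\mathsf{Eballotwt}(T^-)$ follows from \eqref{eqn:apfactor} because the counts $\rho_\ell(\xbox)$ for retained edge labels can only decrease, while each retained edge label is still not-too-high, so its factor remains of the form $t_p-t_q$ with $p<q$.

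To produce an edge label meeting (i) and (ii), take $k$ to be the largest value appearing as an edge label of $T$. When $\mu_k>\mu_{k+1}$ — automatic if $k=\ell(\mu)$, since then $\mu_{k+1}=0$ — pick the rightmost edge label of value $k$. Condition (ii) then holds: no edge labels of value $k+1$ exist by maximality of $k$, and the relevant lattice inequalities of $T$ at columns $c\leq c_0$ are strict on the $k$-side because the chosen edge label itself contributes to $r_c(k)$ at every such $c$. In this generic situation the reduction is immediate.

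The main obstacle is the residual case in which every edge label value $\ell$ has $\mu_\ell=\mu_{\ell+1}$: direct deletion then leaves a non-partition content. I would handle this by a local swap on $T$: exchange an edge label of value $\ell$ with a suitably placed adjacent box label of value $\ell+1$ (replace the box $\ell+1$ by $\ell$ and promote the edge $\ell$ to $\ell+1$), producing a new tableau $\widetilde T$ of the same content $\mu$ but now carrying an edge label at a strictly larger value; iterating, one eventually reaches a value for which (i) and (ii) hold, at which point deletion yields $T^-$. The technical heart of the argument is verifying that such a swap is always available and preserves semistandardness, the lattice condition, and no-too-high — this requires tracking the interplay between column strictness and the ballot inequalities near the swapped cells, and is the step I expect to require the most care.
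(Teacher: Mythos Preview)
Your overall strategy---reduce to a one-step descent via Theorem~\ref{thm:ty} and delete an edge label from a witness tableau---matches the paper's indicated approach (which defers the details to \cite[Section~2]{ARY}).  However, your ``generic case'' argument contains a genuine error.  You assert that if $k$ is the largest edge-label value and you delete the rightmost edge label of value $k$ (in column $c_0$), then $r_c(k)>r_c(k+1)$ for all $c\le c_0$, ``because the chosen edge label itself contributes to $r_c(k)$.''  Contributing one unit to $r_c(k)$ does not by itself force strict inequality; it only gives $r_c(k)\ge 1$.

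Here is a concrete counterexample.  Take $\lambda=(2,1,0)$, $\nu=(5,3,3)$, and the tableau $T$ with box labels
\[
(1,3)=(1,4)=(1,5)=1,\quad (2,2)=1,\ (2,3)=2,\quad (3,1)=2,\ (3,2)=(3,3)=3,
\]
and a single edge label $2$ on the southern edge of $(2,2)$.  One checks directly that $T$ is semistandard, lattice, has no too-high label, and has nonzero ${\sf Eballotwt}$; its content is $\mu=(4,3,2)$.  Here $k=2$ is the unique edge-label value, $c_0=2$, and $\mu_2=3>2=\mu_3$, so you are in your ``generic'' case.  But $r_{2}(2)=2=r_{2}(3)$ (the box $3$'s in column $2$ and $3$ exactly match the box $2$ in column $3$ plus the edge $2$ in column $2$).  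Deleting the edge label yields $r_2(2)=1<2=r_2(3)$, so lattice fails.  Thus simple deletion does \emph{not} produce a witness for $\mu^-=(4,2,2)$.

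The repair is that deletion must in general be accompanied by relabeling some box entries---in the example above, changing the box $(3,2)$ from $3$ to $2$ restores all conditions and witnesses $C_{\lambda,(4,3,1)}^{\nu}\ne 0$ instead.  In other words, your ``swap'' mechanism from the residual case (pushing the effective decrement to a larger index of $\mu$) is needed even when $\mu_k>\mu_{k+1}$, and the two cases should be handled by a single uniform procedure rather than treated separately.  This is exactly the ``technical heart'' you anticipated, but it is not confined to the residual case.
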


Since $C_{\lambda,\mu}^{\nu}\neq 0$, by the claim (and induction) there exists $\lambda^{\downarrow}\subseteq \lambda$ such that $|\lambda^{\downarrow}|+|\mu|=|\nu|$ and $C_{\lambda^{\downarrow},\mu}^{\nu}\neq 0$. This latter number is a classical Littlewood-Richardson coefficient, we can apply Theorem~\ref{thm:classicalHorn} to conclude that for any triple $(I,J,K)$ with $c_{\tau(I),\tau(J)}^{\tau(K)}\neq 0$ one has
\[\sum_{i\in I}\lambda_i^{\downarrow}+\sum_{i\in J}\mu_j\geq \sum_{k\in K}\nu_k.\]
Now we are done since $\sum_{i\in I}\lambda_i\geq \sum_{i\in I}\lambda_i^{\downarrow}$.\qed

The converse (2)$\implies$(1) uses another exercise that can be proved using Theorem~\ref{thm:ty}:

\begin{claim}\label{claim:goingup}
If $C_{\lambda,\mu}^{\nu}\neq 0$ then $C_{\lambda,\mu^{\uparrow}}^{\nu}\neq 0$ for any $\mu\subset \mu^{\uparrow}\subseteq \nu$
\end{claim}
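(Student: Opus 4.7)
The plan is a direct invocation of Theorem~\ref{thm:ty}. First I would reduce to the single-box case $\mu^{\uparrow} = \mu + \mathbf{e}_a$; iterating this along a chain $\mu = \mu^{(0)} \subset \mu^{(1)} \subset \cdots \subset \mu^{(m)} = \mu^{\uparrow}$ inside $\nu$ handles the general statement. For this case, $\mu^{\uparrow}$ being a partition forces $\mu_{a-1} > \mu_a$ (or $a = 1$), and $\mu^{\uparrow} \subseteq \nu$ forces $\mu_a < \nu_a$. Theorem~\ref{thm:ty} supplies some $T \in \eqssyt(\nu/\lambda)$ of content $\mu$ that is lattice, has no too-high label, and has ${\sf Eballotwt}(T) \neq 0$. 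My goal is then to produce $T' \in \eqssyt(\nu/\lambda)$ of content $\mu^{\uparrow}$ by inserting one new edge label $a$ on an appropriate horizontal edge of $\nu/\lambda$.

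To locate such an edge, write $N_\ell(c)$ for the number of $\ell$-labels of $T$ (box or edge) weakly right of column $c$. Inserting an $a$ in column $c^*$ preserves the lattice condition precisely when $N_{a-1}(c) > N_a(c)$ for every $c \leq c^*$. The inequality $\mu_{a-1} > \mu_a$ yields $N_{a-1}(1) > N_a(1)$, so the set of admissible $c^*$ is nonempty; let $c_0$ be its maximum. Within columns $c^* \leq c_0$ I look for a horizontal edge at row $r \geq a$ (the "not too high" requirement) on which the new $a$ is strictly greater than any adjacent box label above in $\nu/\lambda$ and strictly less than any adjacent box label below. A natural first candidate is the bottom edge of row $a$ in some column $c^* \leq \min(c_0, \nu_a)$: the "not too high" bound is then tight, and because the "no too high" hypothesis on $T$ forces every box label in row $a$ to be at most $a$, semistandard from above reduces to $T(a, c^*) < a$, which is vacuous when $(a, c^*) \in \lambda$.

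A valid $T'$ contributes a nonzero weight because each factor of ${\sf Eballotwt}$ is of the form $t_p - t_q$ with $p < q$; combined with Graham positivity (Theorem~\ref{thm:Graham}), this yields $C_{\lambda,\mu^{\uparrow}}^{\nu} \neq 0$ as soon as $T'$ exists. The main obstacle is therefore verifying existence of the insertion edge, which calls for a case analysis. When the bottom-of-row-$a$ candidates all fail — for instance, because $T(a, c^*) = a$ for every ballot-safe column, or because there is no column $c^* \leq c_0$ with $c^* \leq \nu_a$ — one must pass to an alternative family of edges (the bottom of a row $r > a$, or an internal edge between two rows whose adjacent labels leave a gap at value $a$) and exhibit an admissible one in the ballot-safe range. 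The key structural inputs here are the lattice property of $T$, the bound $T(r,c) \leq r$ from "no too high," and the containment $\mu^{\uparrow} \subseteq \nu$, which together guarantee that row $a$ of $\nu$ cannot already be saturated by label $a$.
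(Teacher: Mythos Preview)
Your approach is exactly what the paper indicates: it labels Claim~\ref{claim:goingup} ``another exercise that can be proved using Theorem~\ref{thm:ty}'' and sends the reader to \cite[Section~2]{ARY} for the full argument. Your reduction to the single-box case, the identification of the ballot-safe range $\{1,\ldots,c_0\}$ via the strict inequality $N_{a-1}(c)>N_a(c)$, and the appeal to the positivity of each $\factor$ (so that one valid tableau already forces $C_{\lambda,\mu^{\uparrow}}^{\nu}\neq 0$) are all correct and match the intended method.

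The proposal does, however, stop short of a proof at precisely the point you flag: the existence of an edge in some ballot-safe column on which the label $a$ can be legally inserted. You gesture at a case analysis but do not execute it, and it is less routine than it first appears. For instance, comparing $N_{a-1}(c_0)>N_a(c_0)$ with $N_{a-1}(c_0+1)=N_a(c_0+1)$ shows that column $c_0$ always contains an $(a-1)$ and no $a$, which looks promising; but if that $(a-1)$ occupies the box $(a-1,c_0)$ and the box $(a,c_0)$ lies in $\nu/\lambda$ with a label $\geq a+1$, then every horizontal edge at row $\geq a$ in column $c_0$ has a box of label $>a$ immediately above it, and you are forced to another column. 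Showing that \emph{some} column in $\{1,\ldots,c_0\}$ avoids all such obstructions (no $a$ already present, column reaches row $a$ or contains a label exceeding $a$, the relevant edge is actually an edge of $\nu/\lambda$) requires a genuine argument combining $\mu_a+1\leq\nu_a$ with column-strictness and the not-too-high bound. That is the substance that \cite{ARY} provides and that your sketch leaves unwritten.
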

See \cite[Section~2]{ARY} for proofs of Claim~\ref{claim:goingdown} and Claim~\ref{claim:goingup}.

\begin{remark}
Just as with Theorem~\ref{thm:classicalHorn}, the list of inequalities in Theorem~\ref{thm:equivHorn}(2) contain redundancies.
Building from the results discussed in Remark~\ref{remark:minimal}, W.~Fulton~\cite{Fulton} shows that one can also replace the 
``$c_{\tau(\lambda),\tau(\mu)}^{\tau(\nu)}\neq 0$'' with ``$c_{\tau(\lambda),\tau(\mu)}^{\tau(\nu)}=1$''. W.~Fulton's work  shows
that if the second condition in (\ref{eqn:EqHorncond}) is ignored, the inequalities are minimal for the equivalence
(2)$\iff$(3). 
\qed
\end{remark}

Let ${\sf EqLR}_r=\{(\lambda,\mu,\nu):\ell(\lambda),\ell(\mu),\ell(\nu)\leq r, C_{\lambda,\mu}^{\nu}\neq 0\}$.

\begin{corollary}
${\sf EqLR}_r$ is a semigroup.
\end{corollary}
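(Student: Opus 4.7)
The plan is to derive the semigroup property directly from the equivalence $(1)\iff(2)$ of Theorem~\ref{thm:equivHorn}, paralleling the strategy sketched in Remark~\ref{remark:LRsemigroup} for the ordinary Littlewood-Richardson semigroup ${\sf LR}_r$. The key observation is that both the hypothesis (\ref{eqn:EqHorncond}) and the Horn-type inequalities appearing in condition (2) are preserved under coordinate-wise addition of triples, which is essentially all one needs.

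First, I would fix $(\lambda,\mu,\nu),(\alpha,\beta,\gamma)\in {\sf EqLR}_r$ and verify that the summed triple $(\lambda+\alpha,\mu+\beta,\nu+\gamma)$ satisfies the hypothesis (\ref{eqn:EqHorncond}). The inequality on total sizes is immediate from adding $|\lambda|+|\mu|\geq |\nu|$ and $|\alpha|+|\beta|\geq |\gamma|$. The componentwise condition requires the elementary bound
\[
\max\{\lambda_i+\alpha_i,\,\mu_i+\beta_i\}\leq \max\{\lambda_i,\mu_i\}+\max\{\alpha_i,\beta_i\}\leq \nu_i+\gamma_i,
\]
applied to each index $i\leq r$. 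This is the one spot that requires a moment's care, since $\max$ is subadditive but not additive; fortunately subadditivity is precisely what is needed.

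Next, I would verify condition (2) of Theorem~\ref{thm:equivHorn} for the sum. For any triple $I,J,K\subseteq [r]$ of common cardinality $d<r$ with $c_{\itop(I),\itop(J)}^{\itop(K)}\neq 0$, applying the $(1)\Longrightarrow(2)$ direction of Theorem~\ref{thm:equivHorn} to each of the two input triples yields
\[
\sum_{i\in I}\lambda_i+\sum_{j\in J}\mu_j\geq \sum_{k\in K}\nu_k \qquad\text{and}\qquad \sum_{i\in I}\alpha_i+\sum_{j\in J}\beta_j\geq \sum_{k\in K}\gamma_k.
\]
Adding these gives the desired Horn inequality for the sum triple.

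Finally, I would invoke the $(2)\Longrightarrow(1)$ direction of Theorem~\ref{thm:equivHorn} to conclude $C_{\lambda+\alpha,\mu+\beta}^{\nu+\gamma}\neq 0$, which places $(\lambda+\alpha,\mu+\beta,\nu+\gamma)$ in ${\sf EqLR}_r$. There is no serious obstacle in this argument: the Horn-type characterization does all of the heavy lifting, and the additivity of linear inequalities under summation does the rest. The only point warranting mention is the verification of the componentwise half of (\ref{eqn:EqHorncond}), which, per Remark~\ref{remark:secondcond}, is genuinely required in the equivariant setting (unlike the classical case).
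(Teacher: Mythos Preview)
Your argument is correct and coincides with the paper's primary justification: the corollary follows immediately from the equivalence $(1)\iff(2)$ of Theorem~\ref{thm:equivHorn}, since both the hypotheses~(\ref{eqn:EqHorncond}) and the Horn inequalities are preserved under addition of triples (your subadditivity-of-$\max$ check for the componentwise condition is exactly the small point that needs verifying). The paper additionally supplies a second, more self-contained proof that bypasses the full equivariant Horn theorem: it uses Claim~\ref{claim:goingdown} to reduce each triple to a classical one, invokes Corollary~\ref{cor:LRsemigroup} for ${\sf LR}_r$, and then applies Claim~\ref{claim:goingup} to climb back up; this alternative has the virtue of relying only on the tableau-level Claims~\ref{claim:goingdown} and~\ref{claim:goingup} rather than on the deeper equivalence with the eigenvalue problem.
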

\begin{proof}
Just as Corollary~\ref{cor:LRsemigroup} clearly follows from the first two equivalences of Theorem~\ref{thm:classicalHorn} (Remark~\ref{remark:LRsemigroup}), the present claim holds by the first two equivalences of Theorem~\ref{thm:equivHorn}. However,
we can also prove this directly from Claims~\ref{claim:goingdown} and~\ref{claim:goingup}: By applying Claim~\ref{claim:goingdown} there exists $\lambda^\circ\subseteq \lambda$ and $\alpha^{\circ}\subseteq \alpha$ such that
$C_{\lambda^{\circ},\mu}^{\nu}=c_{\lambda^{\circ},\mu}^{\nu}\neq 0$ and
$C_{\alpha^\circ,\beta}^{\gamma}=c_{\alpha^{\circ},\beta}^{\gamma}\neq 0$. By Corollary~\ref{cor:LRsemigroup}, 
$(\lambda^\circ+\alpha^{\circ},\mu+\beta,\nu+\gamma)\in {\sf LR}_r$. Now, since $\lambda^{\circ}+\alpha^{\circ}\subseteq \lambda+\alpha\subseteq
\nu+\gamma$, we can apply Claim~\ref{claim:goingup} to conclude $(\lambda+\alpha,\mu+\beta,\nu+\gamma)\in {\sf EqLR}_r$, as desired.
\end{proof}

\begin{proposition}
\label{prop:EqLRfinite}
${\sf EqLR}_r$ is finitely generated.
\end{proposition}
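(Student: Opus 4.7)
The plan is to leverage the Friedland-Fulton characterization (Theorem~\ref{thm:equivHorn}) to realize ${\sf EqLR}_r$ as the set of integer lattice points of a rational polyhedral cone in $\mathbb{R}^{3r}$, and then invoke Gordan's lemma to conclude finite generation.

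First I would write down an explicit finite list of defining homogeneous linear inequalities for ${\sf EqLR}_r$ in the variables $\lambda_1,\dots,\lambda_r,\mu_1,\dots,\mu_r,\nu_1,\dots,\nu_r$. The partition conditions contribute $\lambda_i-\lambda_{i+1}\geq 0$, $\lambda_r\geq 0$, and analogous inequalities for $\mu$ and $\nu$. The conditions (\ref{eqn:EqHorncond}) contribute $|\lambda|+|\mu|-|\nu|\geq 0$ together with $\nu_i-\lambda_i\geq 0$ and $\nu_i-\mu_i\geq 0$ for each $i$. Finally, Theorem~\ref{thm:equivHorn}(2) contributes a Horn inequality $\sum_{i\in I}\lambda_i+\sum_{j\in J}\mu_j-\sum_{k\in K}\nu_k\geq 0$ for each triple $(I,J,K)$ of subsets of $[r]$ of common cardinality $d<r$ with $c_{\itop(I),\itop(J)}^{\itop(K)}\neq 0$. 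Since there are only finitely many such triples and every coefficient lies in $\{-1,0,1\}$, the list is finite and rational. By Theorem~\ref{thm:equivHorn}, ${\sf EqLR}_r$ is precisely the set of integer points of $\mathbb{R}^{3r}$ satisfying all of these inequalities.

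Next I would observe that the solution set $C\subseteq \mathbb{R}^{3r}$ is a rational polyhedral cone, being a finite intersection of rational linear halfspaces through the origin, and that ${\sf EqLR}_r=C\cap\mathbb{Z}^{3r}$. Gordan's lemma then states that $C\cap\mathbb{Z}^{3r}$ is a finitely generated monoid under addition, which gives the desired conclusion (finite generation as a semigroup follows \emph{a fortiori}). Alternatively, one can argue without naming the lemma: a rational polyhedral cone is generated by finitely many rational extreme rays, and clearing denominators produces a finite set of integer generators whose nonnegative integer combinations, together with a small ``fundamental parallelepiped'' correction, exhaust $C\cap\mathbb{Z}^{3r}$.

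No serious obstacle arises at any step; the substantive content is already packaged in Theorem~\ref{thm:equivHorn}. The only task is to notice that its inequalities are homogeneous and finite in number. The same outline recovers the classical statement that ${\sf LR}_r$ is finitely generated by using Theorem~\ref{thm:classicalHorn} in place of Theorem~\ref{thm:equivHorn}, as promised in Remark~\ref{remark:LRfg}.
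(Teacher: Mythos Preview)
Your proposal is correct and follows essentially the same route as the paper: realize ${\sf EqLR}_r$ via Theorem~\ref{thm:equivHorn} as the lattice points of a rational polyhedral cone cut out by finitely many homogeneous linear inequalities, then invoke Gordan's lemma (the paper cites \cite[Theorem~16.4]{Schrijver} and additionally notes the cone is pointed). Your version is a bit more explicit in listing the partition and containment inequalities alongside the Horn inequalities, but the argument is the same.
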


The argument we give is based on discussion with S.~Fomin and A.~Knutson. It applies \emph{mutatis mutandis} to
prove that ${\sf LR}_r$ is finitely generated:

\begin{proof}
Since the inequalities from Theorem~\ref{thm:equivHorn} are finite in number, and each inequality has its bounding hyperplane
containing the origin, the set ${\mathcal C}\subseteq {\mathbb R}^{3r}$ they define is a polyhedral cone. Since the inequalities
have rational coefficients, by definition, ${\mathcal C}$ is rational.  Moreover, ${\mathcal C}$ is also clearly pointed, \emph{i.e.}, 
${\mathcal C}\cap -{\mathcal C}=\{{\bf 0}\}$. Now apply \cite[Theorem~16.4]{Schrijver}.
\end{proof}
Naturally, one would like a solution for the generalization of Problem~\ref{problem:zelev} to ${\sf EqLR}_r$.

Theorem~\ref{claim:mainARY} together with Theorem~\ref{thm:ty}, A.~Adve together with the first and third authors \cite{ARY} prove a generalization of Theorem~\ref{thm:LRinP}:

\begin{theorem}[\cite{ARY}]
\label{thm:equivLRinP}
The decision problem of determining $C_{\lambda,\mu}^{\nu}\neq 0$ is in the class ${\sf P}$ of polynomial problems.
\end{theorem}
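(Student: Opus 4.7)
My plan is to emulate the proof of Theorem~\ref{thm:LRinP}, substituting equivariant ingredients for each classical ingredient. First, by Graham positivity (Theorem~\ref{thm:Graham}) and the explicit form of $\factor(\ell,\xbox)$ in \eqref{eqn:apfactor}, every summand ${\sf Eballotwt}(T)$ appearing in Theorem~\ref{thm:ty} is a nonzero positive polynomial in $\beta_1,\ldots,\beta_{n-1}$. Hence
\[
C_{\lambda,\mu}^{\nu}\neq 0 \iff \exists\, T\in \eqssyt(\nu/\lambda) \text{ of content } \mu \text{ that is ballot with no label too high.}
\]
This reduces equivariant nonvanishing to a pure tableau feasibility question.

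Second, I would encode this feasibility problem as the nonemptiness of a rational polytope $\mathcal{P}^{\nu}_{\lambda,\mu}\subseteq \mathbb{R}^{d}$ cut out by linear inequalities with coefficients in $\{-1,0,1\}$, in the spirit of Theorem~\ref{thm:polytopalLR}. The variables would record, for each row $i$ (and, where needed, each column $j$) and each label $k\in[N]$, the count $b^i_k$ of box entries equal to $k$ and the count $e^i_k$ of $k$'s appearing as edge labels along the lower boundary of row $i$. Linear constraints encode: shape ($\sum_k b^i_k=\nu_i-\lambda_i$), content ($\sum_i(b^i_k+e^i_k)=\mu_k$), the semistandard row/column conditions (suitably sandwiched through edge labels), the ballot condition analogous to~(E) of Theorem~\ref{thm:polytopalLR} summed weakly to the right of each column, and the ``no label too high'' condition enforced as vanishing of $b^i_k$ and $e^i_k$ in the forbidden $(i,k)$-range. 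By construction $N\cdot \mathcal{P}^{\nu}_{\lambda,\mu}=\mathcal{P}^{N\nu}_{N\lambda,N\mu}$. Then verbatim from the proof of Theorem~\ref{thm:LRinP}: if $\mathcal{P}^{\nu}_{\lambda,\mu}\neq\emptyset$ it contains a rational vertex, so some dilate carries a lattice point, giving $C_{N\lambda,N\mu}^{N\nu}\neq 0$, and equivariant saturation (Theorem~\ref{claim:mainARY}) then yields $C_{\lambda,\mu}^{\nu}\neq 0$; the converse is immediate. Nonvanishing is therefore equivalent to feasibility of $\mathcal{P}^{\nu}_{\lambda,\mu}$, which, being a combinatorial polytope, is decidable in strongly polynomial time by \'E.~Tardos' algorithm.

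The hard part will be the polytope construction itself. Edge labels at a single edge form a set (no repeats) rather than a multiset, yet must scale correctly: in passing from $\nu/\lambda$ to $N\nu/N\lambda$, each column of the old shape stretches to $N$ columns in which an $e^i_k$-contribution can independently appear. So the natural variables are column-indexed (bounded by $1$ per edge) but their row-aggregates $e^i_k$ must grow linearly with $N$, and the inequalities must be written so that (i) their integer solutions biject with the tableaux of Theorem~\ref{thm:ty}, (ii) all coefficients lie in $\{-1,0,1\}$, and (iii) they scale compatibly with dilation. Formulating the semistandard column strictness (across edge-label sets), the ballot inequalities, and the ``too high'' vanishing so that all three properties hold simultaneously is the technical crux; once that bookkeeping is done, equivariant saturation and standard polynomial-time LP feasibility close the argument.
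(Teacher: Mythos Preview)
Your proposal follows essentially the same route as the paper's sketch: reduce nonvanishing to existence of an edge labeled ballot tableau via Theorem~\ref{thm:ty}, encode this as lattice-point feasibility of a rational polytope with $\{-1,0,1\}$ data, then invoke equivariant saturation (Theorem~\ref{claim:mainARY}) and polynomial-time LP feasibility exactly as in the proof of Theorem~\ref{thm:LRinP}. The paper calls its polytope $\mathcal{Q}_{\lambda,\mu}^{\nu}$ and does not spell out the construction here; you sketch it yourself and correctly flag the handling of edge-label sets under dilation as the technical crux.

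One over-specification worth noting: you require that integer points of your polytope \emph{biject} with the tableaux of Theorem~\ref{thm:ty}. The paper explicitly remarks that for $\mathcal{Q}_{\lambda,\mu}^{\nu}$ the \emph{number} of lattice points is not well-understood; only the equivalence ``lattice point exists $\iff C_{\lambda,\mu}^{\nu}\neq 0$'' is asserted. Your argument needs only this weaker property together with the dilation identity $N\cdot\mathcal{Q}_{\lambda,\mu}^{\nu}=\mathcal{Q}_{N\lambda,N\mu}^{N\nu}$, so demanding a bijection is unnecessary and may well be unattainable with the natural inequalities. Relax your condition~(i) to nonemptiness-equivalence and your outline matches the paper.
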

\noindent
\emph{Sketch of proof:} Using Theorem~\ref{thm:LRinP} one can construct a polytope ${\mathcal Q}_{\lambda,\mu}^{\nu}$ analogous to ${\mathcal P}_{\lambda,\mu}^{\nu}$. The main property is that $C_{\lambda,\mu}^{\nu}\neq 0$ if and only if ${\mathcal Q}_{\lambda,\mu}^{\nu}$ contains a lattice point (in particular,
in contrast to Theorem~\ref{thm:polytopalLR}, the \emph{number} of lattice points of ${\mathcal Q}_{\lambda,\mu}^{\nu}$ is not well-understood). The remainder
of the proof proceeds exactly as in the proof of Theorem~\ref{thm:LRinP}, except that we use Theorem~\ref{claim:mainARY} in place of 
Theorem~\ref{thm:usualsat}.\qed

For the remainder of this section, assume $C_{\lambda,\mu}^{\nu}$ is expressed (uniquely) as a polynomial in the variables $\beta_i:=t_{i}-t_{i+1}$. We now state
a few open problems/conjectures introduced in \cite{RYY}, for the special case of Grassmannians.

A refinement of the nonvanishing question is:
\begin{question}\label{question:refinednonvanishing}
What is the computational complexity of determining if $[\beta_1^{i_1}\cdots \beta_{n-1}^{i_{n-1}}]C_{\lambda,\mu}^{\nu}\neq 0$?
\end{question} 

This question concerns the Newton polytope of $C_{\lambda,\mu}^{\nu}$.
Recall, the \emph{Newton polytope} of 
\[f=\sum_{(n_1,\ldots,n_r)\in {\mathbb Z}_{\geq 0}^r} c_{n_1,\ldots,n_r}\prod_{j=1}^r \alpha_j^{n_j}\in {\mathbb R}[\alpha_1,\ldots,\alpha_r]\] 
is ${\sf Newton}(f):={\sf conv}\{(n_1,\ldots,n_r): c_{n_1,\ldots,n_r}\neq 0\}\subseteq {\mathbb R}^r$. 
$f$ has \emph{saturated Newton polytope} (SNP) \cite{MTY} if
$c_{n_1,\ldots,n_r}\neq 0 \iff (n_1,\ldots,n_r)\in {\sf Newton}(f)$. 
\begin{conjecture}
\label{conj:SNP}
$C_{\lambda,\mu}^{\nu}$ has SNP.
\end{conjecture}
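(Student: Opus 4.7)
The plan is to exploit the edge labeled ballot rule (Theorem~\ref{thm:ty}):
\[C_{\lambda,\mu}^{\nu} = \sum_{T} \prod_{\ell}  \factor(\ell, x),\]
where the sum runs over ballot equivariant semistandard tableaux $T$ of shape $\nu/\lambda$ and content $\mu$ with no too-high label, and each factor has the form $t_p - t_q = \beta_p + \beta_{p+1} + \cdots + \beta_{q-1}$. Expanding each factor turns ${\sf Eballotwt}(T)$ into a sum of $\beta$-monomials indexed by functions $\phi$ that choose, for each edge label $\ell$, an index $\phi(\ell)$ in the interval $[p_\ell(T), q_\ell(T) - 1]$ cut out by its factor. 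Since Graham positivity (Theorem~\ref{thm:Graham}) forbids cancellation across tableaux, the $\beta$-support of $C_{\lambda,\mu}^{\nu}$ equals the set of all vectors $\sum_\ell e_{\phi(\ell)}$ arising from admissible pairs $(T,\phi)$.

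Next I would seek an explicit polyhedral description of ${\sf Newton}(C_{\lambda,\mu}^{\nu})$. A natural candidate is the polytope cut out by the degree equality $\sum n_i = |\lambda|+|\mu|-|\nu|$, non-negativity $n_i \geq 0$, and a family of ``interval-support'' inequalities recording which $\beta_p$ can appear for a given $(\lambda,\mu,\nu)$; these in turn should be read off the hook structure of $\nu$ and the distance statistic $\dist$. I would verify this description against the small examples worked out in Sections~\ref{sec:ejdt} and~\ref{sec:Friedland}. SNP then reduces to the statement that every lattice point of this polytope is the exponent of some admissible pair $(T,\phi)$. The cleanest route is an exchange lemma: given admissible pairs with exponents $a$ and $b$ such that $a_i > b_i$ and $a_j < b_j$, a controlled local modification built from the equivariant jeu de taquin slides (J1)--(J4), together with simple re-selections within each $\phi$-interval, should produce a new admissible pair whose exponent is strictly closer in $\ell^\infty$-distance to any prescribed intermediate lattice point. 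Iterating such exchanges connects any two exponents in the support by a lattice-point path through the support.

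The principal obstacle I anticipate is preserving simultaneously the three independent constraints of Theorem~\ref{thm:ty}---semistandardness, the ballot condition, and the absence of too-high labels---while performing such an exchange. The delicacy is compounded by the fact that $\factor(\ell, x)$ depends on the local statistic $\rho_\ell(x)$, so the admissible interval for an edge label can shift when a neighboring edge label is moved; one must therefore track how the interval polytope changes under the exchange. A fallback strategy, should direct exchanges prove unwieldy, is to identify the $\beta$-support with the lattice points of a transportation-style polytope analogous to $\mathcal{P}_{\lambda,\mu}^{\nu}$ of Theorem~\ref{thm:polytopalLR}, enriched with a $\beta$-degree statistic, and to leverage the equivariant saturation theorem (Theorem~\ref{claim:mainARY}) together with the polytope ${\mathcal Q}_{\lambda,\mu}^{\nu}$ from the proof of Theorem~\ref{thm:equivLRinP} to bootstrap from nonvanishing of $C_{\lambda,\mu}^{\nu}$ to the realizability of individual monomials. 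I suspect this fallback will establish SNP only after a dilation, leaving a final combinatorial step to close the integer-scale gap.
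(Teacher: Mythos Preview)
The statement you are attempting to prove is labeled \emph{Conjecture}~\ref{conj:SNP} in the paper, and the paper offers no proof of it; it is presented as an open problem, followed immediately by Problem~\ref{prob:halfspace} and a discussion of what a proof would imply for the complexity question in Question~\ref{question:refinednonvanishing}. There is therefore no ``paper's own proof'' to compare against.

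Your proposal is not a proof but a strategy sketch, and you correctly identify its own gaps. The first part---expanding each $\factor(\ell,x)$ into $\beta$-monomials and using Graham positivity to rule out cancellation---is sound and does give a description of the $\beta$-support of $C_{\lambda,\mu}^{\nu}$ as a union of discrete boxes, one per tableau $T$. The difficulty is exactly where you locate it: you do not have a candidate halfspace description of ${\sf Newton}(C_{\lambda,\mu}^{\nu})$ (this is Problem~\ref{prob:halfspace}, also open), and your proposed exchange lemma is speculative. The dependence of the interval $[p_\ell(T),q_\ell(T)-1]$ on the statistic $\rho_\ell(x)$, which in turn depends on the placement of other labels in $T$, means that the boxes indexed by different $T$ are not translates of one another, and there is no evident reason their union should be lattice-convex. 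Your fallback via ${\mathcal Q}_{\lambda,\mu}^{\nu}$ and equivariant saturation addresses nonvanishing of the entire polynomial, not realizability of a prescribed monomial, and you yourself note this would at best yield SNP after dilation. In short, the proposal outlines a reasonable line of attack on an open conjecture but does not close it.
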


This raises the question:

\begin{problem}
\label{prob:halfspace}
Give a half space description of ${\sf Newton}(C_{\lambda,\mu}^{\nu})$.
\end{problem}

A proof of Conjecture~\ref{conj:SNP} together with any reasonable solution to
Problem~\ref{prob:halfspace} would imply that the decision problem in Question~\ref{question:refinednonvanishing}
is in the computational complexity class ${\sf NP}\cap {\sf coNP}$. This would strongly suggest that the decision problem is not
{\sf NP}-{\sf complete}, and in fact suggest the problem is in {\sf P}. We refer the reader to \cite[Section~1]{Anshul.Robichaux.Yong}
for elaboration on these points.

\section{Maximal orthogonal and Lagrangian Grassmannians}\label{sec:PQstuff}

\subsection{Goals in the sequel} Beyond Grassmannians ${\sf Gr}_k({\mathbb C}^n)$, 
the \emph{maximal orthogonal Grassmannians} and \emph{Lagrangian Grassmannians} have been of significant interest. Their classical (non-equivariant) 
Schubert calculus shares many analogies with the Grassmannian case. They concern the $Q$-Schur polynomials
of I.~Schur \cite{Schur}, and the tableau combinatorics of D.~Worley \cite{Worley}, B.~Sagan \cite{Sagan} and J.~Stembridge \cite{Stembridge}.
Although these combinatorial results were originally developed to study projective representations of symmetric group, the connection to Schubert calculus of these
spaces was established by P.~Pragacz \cite{Pragacz}. 

It is therefore natural to seek extensions of the results from Sections~1-6. Discussion of efforts toward this goal occupy the remainder of this work.

\subsection{Definition of the spaces}
Consider the two classical Lie groups of non-simply laced type: ${\sf G}={\sf SO}_{2n+1}({\mathbb C})$ and ${\sf G}={\sf Sp}_{2n}({\mathbb C})$. These are the automorphism groups of a non-degenerate bilinear form $\langle \cdot,\cdot\rangle$. In the former case, $\langle\cdot,\cdot\rangle$ is symmetric, and on $W={\mathbb C}^{2n+1}$.
In the latter case,  $\langle\cdot,\cdot\rangle$ is skew-symmetric, and on $W={\mathbb C}^{2n}$. A subspace $V\subseteq W$ is called \emph{isotropic} if $\langle v_1,v_2\rangle=0$ for all $v_1,v_2\in V$. The maximum dimension of an isotropic subspace of $W$ is $n$. 

Let $Y={\sf OG}(n,2n+1)$ be the \emph{maximal orthogonal Grassmannian} of $n$-dimensional isotropic subspaces of ${\mathbb C}^{2n+1}$; this space has an action of ${\sf G}={\sf SO}_{2n+1}({\mathbb C})$. Similarly, let $Z={\sf LG}(n,2n)$
be the \emph{Lagrangian Grassmannian} of $n$-dimensional isotropic subspaces of ${\mathbb C}^{2n}$. In either case,
the (opposite) Borel subgroup ${\sf B}_{-}\leq {\sf G}$ consists of the lower triangular matrices in ${\sf G}$. 
The maximal torus ${\sf T}$ are the diagonal matrices in ${\sf G}$.
Just as in the case of the Grassmannian, the corresponding ${\sf B}_{-}$ acts on $Y$ (resp.~$Z$) with finitely
many orbits $Y_{\lambda}^{\circ}$ (resp.~$Z_{\lambda}^{\circ}$); these are the \emph{Schubert cells}. 

In both cases, the Schubert cells and thus Schubert varieties $Y_{\lambda}=\overline{Y_{\lambda}^{\circ}}$ (resp.~$Z_{\lambda}=\overline{Z_{\lambda}^{\circ}}$) are indexed by strict partitions fitting inside the shifted staircase 
\[\rho_n=(n,n-1,n-2,\ldots,3,2,1).\]
A \emph{strict partition} is an integer partition $\lambda=(\lambda_1>\lambda_2>\ldots > \lambda_{\ell}>0)$. Identify $\lambda$ with its shifted shape,
which is the usual Young diagram (in English notation) but where the $i$-th row from the top is indented $i-1$ many spaces. We refer to, \emph{e.g.}, \cite[Section~6]{Ikeda.Naruse} and the references therein for additional details.

Let $\sigma_{\lambda}(Y)\in H^{2|\lambda|}(Y)$ be the Poincar\'e dual to $Y_{\lambda}$. These \emph{Schubert
classes} form a ${\mathbb Z}$-linear basis of $H^{*}(Y)$, and we define the structure constants by
\[\sigma_{\lambda}(Y)\smallsmile \sigma_{\mu}(Y)=\sum_{\nu\subseteq \rho_n} o_{\lambda,\mu}^{\nu}\sigma_{\nu}(Y).\]
Similarly, the Schubert classes $\sigma_{\lambda}(Z)$ form a ${\mathbb Z}$-linear basis of $H^{*}(Z)$, and
\[\sigma_{\lambda}(Z)\smallsmile \sigma_{\mu}(Z)=\sum_{\nu\subseteq \rho_n} l_{\lambda,\mu}^{\nu} \sigma_{\nu}(Z).\]

\subsection{Schur $P-$ and $Q-$ functions;  P.~Pragacz's theorem}

Let
\[q_r(x_1,\ldots,x_n)=2\sum_{i=1}^n x_i^r\prod_{i\neq j}\frac{x_i+x_j}{x_i-x_j}.\]
This is clearly symmetric and in fact a polynomial. Next, set
\[Q_{(r,s)}=q_r q_s+2\sum_{i=1}^s (-1)^i q_{r+i}q_{s-i}.\]
Recall that the \emph{Pfaffian} of a $2t\times 2t$ skew-symmetric matrix $M=(m_{ij})$ is
\[{\sf Pf}(M)=\sum_{\sigma\in {\mathfrak S}_{2t}} {\sf sgn}(\sigma)\prod_{i=1}^{t} m_{\sigma(2i-1), \sigma(2i)},\]
where $\sigma$ satisfies $\sigma(2i-1)<\sigma(2i)$ for $1\leq i \leq m$ and $\sigma(1)<\sigma(3)<\ldots<\sigma(2i-3)<\sigma(2t-1)$.
Then for $\lambda=(\lambda_1>\lambda_2>\ldots>\lambda_{\ell}>0)$, the \emph{Schur $Q-$ function} \cite{Schur} is
\begin{equation}
\label{eqn:Qpfaff}
Q_{\lambda}={\sf Pf}(Q_{(\lambda_i,\lambda_j)}).
\end{equation}
If $\ell(\lambda)$ is odd, we add a $0$ at the end. The Schur $Q-$ functions linearly span the subalgebra $\Gamma\subset {\sf Sym}$ generated by the $q_i$'s. In fact, $Q_{(i)}=q_i$.
The \emph{Schur $P-$ function} is
\begin{equation}
\label{eqn:PQ}
P_{\lambda}:=2^{-\ell(\lambda)}Q_{\lambda}.
\end{equation}

P.~Pragacz \cite{Pragacz} proved that the Schur $P-$ and Schur $Q-$ polynomials represent the Schubert classes of $Y$ and $Z$ respectively.
That is, 
\[H^{*}(Y)\cong \Gamma/J,\]
where $J$ is the ideal $\langle P_{\lambda}:\lambda\not\subseteq \rho_n\rangle$, and 
$\sigma_{\lambda}(Y)$ maps to $P_{\lambda}+J$ under this isomorphism. Moreover,
\[P_{\lambda} P_{\mu}=\sum_{\nu} o_{\lambda,\mu}^{\nu}P_{\nu}.\] 
Similarly,
\[Q_{\lambda} Q_{\mu}=\sum_{\nu} l_{\lambda,\mu}^{\nu} Q_{\nu}.\] 

J.~R.~Stembridge \cite{Stembridge} proved 
\[P_{\lambda}=\sum_T x^T,\]
where the sum is over semistandard fillings of the shifted shape $\lambda$. That is, fill each box of $\lambda$ with a label
from the ordered set $1'<1<2'<2<3'<3<\ldots$ such that the rows and columns are weakly increasing, two $i'$-s cannot appear in the same row and two $i$-s cannot appear in the same column. Moreover, there are no primed entries on the main diagonal. For example, if $\lambda=(2,1)$, the shifted semistandard tableaux are
\[\tableauS{1&1\\& 2} \ \ \ \ \tableauS{1&2'\\&2} \ \ \ \  \tableauS{1&1\\&3} \ \ \ \   \tableauS{1&2'\\&3} \ \ \ \  \tableauS{1&2\\&3}\ \ \ \  \tableauS{1&3'\\&3}\ \ \ \  \tableauS{2&2\\&3}\ \ \ \  \tableauS{2&3'\\&3} \ \ \ \ \cdots \] 
Hence $P_{(2,1)}(x_1,x_2,x_3)=x_1 x_2^2 +x_1 x_2^2 +x_1^2 x_3 +x_1 x_2 x_3 +x_1 x_2 x_3 +x_1 x_3^2 +x_2^2 x_3 +x_2 x_3^2 + \cdots $. 
If we allow primed entries on the diagonal, we get the Schur $Q-$ function. It is easy to see that this definition satisfies (\ref{eqn:PQ}).

\subsection{Shifted Littlewood-Richardson rules}
D.~Worley \cite{Worley} introduced a \emph{jeu de taquin} theory for shifted shapes. A standard tableau $T$ of shifted skew shape $\nu/\lambda$ is a filling
of $\lambda$ with the labels $1,2,3,\ldots,|\nu/\lambda|$ that is increasing along rows and columns. Let ${\sf shSYT}(\nu/\lambda)$ denote the set of these tableaux. The notions of slides and rectification
are just as in the unshifted case, using (J1) and (J2). With this, the exact analogues of Theorems~\ref{thm:firstjdt} and~\ref{thm:secondjdt}
hold and one can define ${\sf ShEjdt}$ and ${\sf shRect}$ etc, in the obvious manner. Indeed, one has the following combinatorial rule for $o_{\lambda,\mu}^{\nu}$:

\begin{theorem}[Shifted jeu de taquin Littlewood-Richardson rule]
\label{thm:shiftedjdt}
Fix $U\in {\sf shSYT}(\mu)$. Then
\[o_{\lambda,\mu}^{\nu}=\#\{T\in  {\sf shSYT}(\nu/\lambda):{\sf shRect}(T)=U\}.\]
\end{theorem}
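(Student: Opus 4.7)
The plan is to follow the ``associativity argument'' template described in Section~\ref{sec:Young}. First I would establish shifted analogs of Theorems~\ref{thm:firstjdt} and~\ref{thm:secondjdt}: independence of the shifted rectification from the sequence of inner corners chosen, and independence of the count $\#\{T \in {\sf shSYT}(\nu/\lambda): {\sf shRect}(T) = U\}$ from the choice of $U \in {\sf shSYT}(\mu)$. Worley's thesis proves these via careful local analysis of slides; a cleaner alternative is to set up a shifted analog of Haiman's dual equivalence theory, which yields both fundamental theorems simultaneously.

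Assuming these, let $\bar{o}_{\lambda,\mu}^{\nu} := \#\{T \in {\sf shSYT}(\nu/\lambda): {\sf shRect}(T) = U\}$, well-defined by the shifted version of Theorem~\ref{thm:secondjdt}. Form a putative ring $(R,+,\star)$ with ${\mathbb Z}$-basis $\{[\lambda] : \lambda \subseteq \rho_n\}$ and product $[\lambda]\star[\mu] := \sum_\nu \bar{o}_{\lambda,\mu}^{\nu} [\nu]$. Commutativity of $\star$ follows from a symmetry bijection on rectifying tableaux (e.g.\ via shifted Sch\"utzenberger involution/evacuation), and associativity is a formal consequence of the shifted first fundamental theorem, exactly as in the unshifted case: both $([\lambda]\star[\mu])\star[\eta]$ and $[\lambda]\star([\mu]\star[\eta])$ count triples of rectifying jeu de taquin orbits, which match after rechoosing slide orders.

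It remains to identify $(R,\star)$ with the image of $\Gamma$ in $H^{*}(Y)$ under Pragacz's isomorphism. For this it suffices to verify agreement of $\star$ with multiplication by $P_{(r)}$ for all $r$, since the $P_{(r)} = q_r$ generate $\Gamma$. Worley's shifted Pieri rule describes $P_\lambda P_{(r)}$ as a sum over strict $\nu \supseteq \lambda$ with $\nu/\lambda$ a shifted horizontal strip of size $r$, with multiplicities involving powers of two counting connected components. A direct enumeration of $T \in {\sf shSYT}(\nu/\lambda)$ rectifying to the unique $U \in {\sf shSYT}((r))$ must recover exactly these multiplicities; induction on $|\mu|$ together with associativity then gives $\bar{o}_{\lambda,\mu}^{\nu} = o_{\lambda,\mu}^{\nu}$.

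The main obstacle will be establishing the shifted fundamental theorems rigorously: local slide configurations near the main diagonal interact more subtly than in the unshifted case, and the natural setup via shifted dual equivalence requires careful bookkeeping of primed labels under standardization. A secondary difficulty is the Pieri matching step, which must reproduce the factor-of-two phenomena intrinsic to Schur $P$-functions through an honest combinatorial count of rectifying tableaux corresponding to each connected component of a horizontal strip. With these two inputs in hand, the remainder of the argument is formal.
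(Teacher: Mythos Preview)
The paper does not prove this theorem; it is stated as a known result, with the underlying shifted \emph{jeu de taquin} theory attributed to Worley~\cite{Worley} and the surrounding combinatorics to Sagan~\cite{Sagan} and Stembridge~\cite{Stembridge}. There is thus no proof in the paper to compare against directly.

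Your associativity-argument outline is a valid route, and is essentially the strategy of \cite{BKT} and of \cite{Thomas.Yong:comin} (the latter treats $Y={\sf OG}(n,2n+1)$ as one of the cominuscule cases). The two difficulties you flag---the shifted fundamental theorems near the diagonal, and the Pieri matching with its powers of $2$---are precisely the substantive work, and both are carried out in Worley's thesis. Two refinements worth noting. First, associativity uses the second fundamental theorem as well as the first: expanding $[\lambda]\star([\mu]\star[\eta])$ requires freedom in choosing the target tableau $U_\theta\in{\sf shSYT}(\theta)$ for each intermediate shape $\theta$, and that freedom is exactly what the second theorem supplies. Second, $\Gamma$ is not a free polynomial ring in the $q_r$ (the relations $Q_{(r,r)}=0$ hold), so one cannot simply define a ring map $\Gamma\to R$ on generators without checking relations. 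The clean finish is instead to iterate Pieri from $[\emptyset]$: any polynomial in the $[(r)]$ then expands in $R$ with the same coefficients as the corresponding polynomial in the $\sigma_{(r)}$ in $H^*(Y)$, so the Pfaffian expression~(\ref{eqn:Qpfaff}) transports $\sigma_\mu$ to $[\mu]$, after which the general product follows.
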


\begin{example}\label{ex:LRshiftedTab}
Let $\lambda=(3,1),\mu=(3,1),\nu=(4,3,1)$. The following are the $2$ shifted tableaux of shape $\nu/\lambda$ that rectify to $U = \tableauS{{1}&{2}&{3}\\ &{4}} $. Thus, $o_{\lambda,\mu}^{\nu}=2$.
\begin{gather*}
\tableauS{{\ }&{\  }&{\ }&{1}\\  & {\ }&{2}&{3}&\\  &  &{4}&}
\qquad
\tableauS{{\ }&{\  }&{\ }&{3}\\  & {\ }&{1}&{4}&\\  &  &{2}&}
\end{gather*}
\end{example}

 Define the {\it reading word} of a (possibly skew) shifted tableaux $T$ to be the word obtained by reading the rows of $T$ from left to right starting with the bottom row. For a word $w=w_1 w_2 \ldots w_n$, define
 ${\sf JS}_i(j)$ for $1\leq j\leq 2n, i\geq 1$, depending on $w$: 
\[{\sf JS}_i(j):=\text{number of occurrences of $i$ among $w_{n-j+1},\ldots,w_n$}, \ 0\leq j\leq n,\]
and
\[{\sf JS}_i(n+j):={\sf JS}_i(n)+\text{number of occurrences of $i'$ among $w_1,\ldots,w_j$}, \ 0<j\leq n.\]

The word $w$ is {\it proto-ballot} if, when ${\sf JS}_i(j)={\sf JS}_{i-1}(j)$, both of these statements hold:
\begin{center}
$w_{n-j}\neq i,i'$,   \  if $0\leq j<n;$\\ 
$w_{j-n+1}\neq i-1,i'$, \  if $n\leq j <2n$.
\end{center}

Let $|w|$ be the word obtained from $w$ by removing all primes. Now, $w$ is {\it ballot} if it is {\it proto-ballot} and the leftmost $i$ of $|w|$ is unprimed in $w$ for all $i$. In his work on projective representation theory of symmetric groups, J.~Stembridge \cite[Theorem 8.3]{Stembridge} gave the following semistandard analogue of Theorem~\ref{thm:shiftedjdt}.
\begin{theorem}[Shifted ballot Littlewood-Richardson rule]\label{thm:jrsrule}
$o_{\lambda,\mu}^{\nu}$ counts the number of shifted semistandard tableaux of shape $\nu/\lambda$ of content $\mu$
that are ballot.
\end{theorem}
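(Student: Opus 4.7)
The plan is to reduce Theorem~\ref{thm:jrsrule} to the standard shifted jeu de taquin rule (Theorem~\ref{thm:shiftedjdt}) via a standardization map, paralleling the argument given for Theorem~\ref{thm:LRsemi} in Section~\ref{sec:Young}. The target is to exhibit a weight-preserving bijection between ballot shifted semistandard tableaux of shape $\nu/\lambda$ and content $\mu$ on the one hand, and shifted standard tableaux of shape $\nu/\lambda$ that ${\sf shRect}$ to a chosen $U\in{\sf shSYT}(\mu)$ on the other.

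First I would define a shifted standardization ${\sf std}: T \mapsto T^\circ$. For each $i$, the entries of $T$ labeled $i'$ occupy a vertical strip (at most one $i'$ per row, and none on the diagonal), while the entries labeled $i$ occupy a horizontal strip (at most one $i$ per column). Replace the $\mu_i$ entries equal to $i'$ or $i$ by the consecutive integers $\mu_1+\cdots+\mu_{i-1}+1,\ldots,\mu_1+\cdots+\mu_i$, assigning the $i'$s first (read down their vertical strip, bottom-to-top within each column, left-to-right across columns) and then the $i$s (read left-to-right along the horizontal strip, top-to-bottom). One checks directly that $T^\circ\in{\sf shSYT}(\nu/\lambda)$.

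Next, fix the ``superstandard'' shifted target $U_\mu\in{\sf shSYT}(\mu)$ obtained by applying ${\sf std}$ to the shifted semistandard tableau with $\mu_i$ copies of $i$ in row $i$ of $\mu$. I would then argue that the ballot conditions of Theorem~\ref{thm:jrsrule} are equivalent to the rectification condition ${\sf shRect}(T^\circ)=U_\mu$. Concretely, the proto-ballot inequalities on ${\sf JS}_i$ record that the deprimmed word $|w|$ rectifies (in the unshifted sense) to the row-content tableau of shape $\mu$, while the supplementary constraint that the leftmost $i$ of $|w|$ is unprimed pins down how primed and unprimed entries interact along the main diagonal during rectification — precisely the data that determines which standard tableau on $\mu$ is the rectification target. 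This should be verified by induction on $|\nu/\lambda|$ using the slide rules (J1), (J2).

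The principal obstacle is proving that standardization commutes with shifted jeu de taquin, i.e.\ that ${\sf shRect}(T^\circ) = ({\sf shRect}(T))^\circ$ for a consistent choice of standardization on both sides. In the unshifted case this is a clean consequence of Knuth equivalence; in the shifted case the analogous tool is Worley's shifted jeu de taquin together with Sagan's and Stembridge's shifted plactic relations, and one must take care with the ambiguity introduced by the primed/unprimed distinction whenever a slide moves past the main diagonal. Once this compatibility and the fiber identification above are in place, Theorem~\ref{thm:shiftedjdt} applied with $U=U_\mu$ delivers the count $o_{\lambda,\mu}^{\nu}=\#\{T^\circ:{\sf shRect}(T^\circ)=U_\mu\}=\#\{\text{ballot shifted semistandard }T\text{ of content }\mu\}$, completing the proof.
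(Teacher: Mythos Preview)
The paper does not prove this theorem; it is stated with attribution to Stembridge \cite[Theorem~8.3]{Stembridge} and no argument is supplied. So there is no in-paper proof to compare against.

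Your outline follows the natural strategy of reducing to Theorem~\ref{thm:shiftedjdt} via standardization, paralleling the sketch given for Theorem~\ref{thm:LRsemi}. The broad shape is right, but two points are not yet nailed down. First, the standardization order you describe for the $i'$ entries (``bottom-to-top within each column, left-to-right across columns'') is garbled: the $i'$ entries are at most one per \emph{row}, so the cells carrying $i'$ are naturally linearly ordered by row index, and you must check that your chosen order actually produces a standard shifted tableau and, crucially, that it commutes with shifted jeu de taquin slides (including the diagonal case). Second, the equivalence between Stembridge's ballot condition (the two-pass ${\sf JS}_i$ statistic on the doubled word together with the ``leftmost $i$ unprimed'' clause) and the condition ${\sf shRect}(T^\circ)=U_\mu$ is not as immediate as you suggest; your induction sketch does not indicate how the ${\sf JS}_i$ inequalities transform under a single slide, and the ``leftmost $i$ unprimed'' clause does more than merely pin down diagonal behavior---it is what distinguishes genuinely different semistandard fillings that would otherwise standardize to the same object. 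Filling these gaps amounts to invoking (or reproving) the Sagan--Worley shifted insertion theory or Haiman's shifted dual equivalence, which is essentially the content of Stembridge's original proof.
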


\begin{example}\label{ex:ShLRballotTab}
Let $\lambda=(3,1),\mu=(3,1),\nu=(4,3,1)$, then the following are the only $2$ shifted semistandard tableaux of shape $\nu/\lambda$ of content $\mu$
that are ballot.
\begin{gather*}
\tableauS{{\ }&{\  }&{\ }&{1'}\\  & {\ }&{1}&{1}&\\  &  &{2}&}
\qquad
\tableauS{{\ }&{\  }&{\ }&{1}\\  & {\ }&{1'}&{2}&\\  &  &{1}&}
\end{gather*}
\end{example}

It follows from (\ref{eqn:PQ}) that
\begin{equation}
\label{eqn:PQAug11}
l_{\lambda,\mu}^{\nu}=2^{\ell(\nu)-\ell(\lambda)-\ell(\mu)}o_{\lambda,\mu}^{\nu}.
\end{equation}
Thus the above rules give a rule to compute $l_{\lambda,\mu}^{\nu}$ in a manifestly positive manner, as well.

\subsection{Nonvanishing} K.~Purbhoo-F.~Sottile \cite{PS:FPSAC, Purbhoo.Sottile} gave an extension of the Horn recursion to describe when $o_{\lambda,\mu}^{\nu}$ (or equivalently $l_{\lambda,\mu}^{\nu}$) is nonzero. Fix $n$ and $r$. Suppose $\alpha$ is a (ordinary) partition whose (unshifted) Young diagram is contained in $r\times (n-r)$. Let
\[I_{n}(\alpha):=\{n-r+1-\alpha_1,n-r+2-\alpha_2,\ldots,n-\alpha_r\}.\] 
Index the corners of $\rho_n$ top to bottom from $1$ to $n$. For $\lambda\subseteq \rho_n$, $0<r<n$ and $\alpha\subseteq r\times (n-r)$ let $[\lambda]_{\alpha}$ 
be the number of boxes of $\lambda$ that survive after crossing the rows to the right and columns above the corners indexed by
$I_{n}(\alpha)$. Define $\lambda^c$ to be the complement of $\lambda$
in $\rho_n$ (after reflecting), whereas $\alpha^c=\alpha^{\vee}$ is the rotation of the complement of $\alpha$ in $r\times (n-r)$.

\begin{example}\label{ex:PurbhooSottile}
Suppose that $n=6,r=3$ and $\alpha=(3,2,1)$. Then $$I_n(\alpha)=\{6-3+1-3,6-3+2-2,6-3+3-1\} = \{1,3,5\}.$$
Suppose $\lambda=(6,4,3,1)\subseteq \rho_n$. In the figure below, yellow boxes are the ones that are crossed out. Thus, $[\lambda]_{\alpha}=3$.
\begin{gather*}
\ytableausetup
{boxsize=1.1em}\begin{ytableau}
 *(yellow) \newmoon & *(yellow)\newmoon & *(yellow)\newmoon & *(yellow)\newmoon & *(yellow)\newmoon & *(yellow)\newmoon \\ 
 \none     & \newmoon  & *(yellow)\newmoon &\newmoon   & *(yellow)\newmoon  & \\
\none  & \none & *(yellow)\newmoon & *(yellow)\newmoon & *(yellow) \newmoon& *(yellow) \\
 \none & \none & \none &\newmoon & *(yellow) & \\
 \none & \none & \none & \none & *(yellow) & *(yellow) \\
 \none & \none & \none & \none & \none & 
\end{ytableau}
\end{gather*}
\end{example}

\begin{theorem}[K.~Purbhoo-F.~Sottile's theorem]\label{thm:PurbhooSottile}
For $\lambda, \mu,\nu\subseteq \rho_n$,
$o_{\lambda,\mu}^{\nu^c}\neq 0$ if and only if
\begin{itemize}
\item $|\lambda|+|\mu|+|\nu|=\dim Y=\binom{n+1}{2}$, and
\item for all $0<r<n$ and all $\alpha,\beta,\gamma\subset r\times (n-r)$ such that $c_{\alpha,\beta}^{\gamma^c}\neq 0$, one has
$[\lambda]_{\alpha}+[\mu]_{\beta}+[\nu]_{\gamma}\leq \binom{n+1-r}{2}$.
\end{itemize}
\end{theorem}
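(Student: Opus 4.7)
The plan is to follow the strategy used to prove the classical Horn theorem (Theorem~\ref{thm:classicalHorn}), adapted to the shifted setting. The dimension condition is straightforward: $o_{\lambda,\mu}^{\nu^c}$ is a triple intersection number of three Schubert varieties in $Y$ put in generic position, so it is zero unless the sum of codimensions equals $\dim Y=\binom{n+1}{2}$, giving $|\lambda|+|\mu|+|\nu^c|=\binom{n+1}{2}$, equivalently $|\lambda|+|\mu|+|\nu|=\binom{n+1}{2}$.

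For necessity of the Horn-type inequalities, the approach would exploit a natural embedding of $Y$ into an ordinary Grassmannian. Using the isomorphism ${\sf OG}(n,2n+1)\cong {\sf OG}(n+1,2n+2)$ together with the tautological inclusion ${\sf OG}(n+1,2n+2)\hookrightarrow {\sf Gr}(n+1,2n+2)$, one pulls back Schubert classes from the Grassmannian. The ${\sf T}$-fixed points of $Y$ are in bijection with subsets of $[n]$, and the $r$ corners of $\rho_n$ indexed by $I_n(\alpha)$ correspond to choosing a fixed flag that lies in a smaller sub-orthogonal Grassmannian of dimension $\binom{n+1-r}{2}$. Assuming $o_{\lambda,\mu}^{\nu^c}\neq 0$ and given a triple $(\alpha,\beta,\gamma)$ with $c_{\alpha,\beta}^{\gamma^c}\neq 0$, intersecting the three Schubert cycles of $Y$ with generic translates of this smaller ${\sf OG}$---and using $\alpha,\beta,\gamma$ to control the transverse ``Grassmannian directions''---forces the dimension constraint $[\lambda]_{\alpha}+[\mu]_{\beta}+[\nu]_{\gamma}\leq \binom{n+1-r}{2}$, since the quantity $[\lambda]_{\alpha}$ counts precisely the surviving codimension of $Y_{\lambda}$ inside the smaller ${\sf OG}$.

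For sufficiency, the plan is to proceed by induction on $n$, paralleling the Klyachko--Knutson--Tao argument. The base $n=1$ is trivial. For the inductive step, given a triple $(\lambda,\mu,\nu)$ satisfying the inequalities and the dimension equality, one seeks to construct a witness for $o_{\lambda,\mu}^{\nu^c}\neq 0$. A combinatorial attack would use Theorem~\ref{thm:jrsrule} to reduce nonvanishing to the existence of a ballot shifted semistandard tableau of shape $\nu^c/\lambda$ and content $\mu$. Alternatively, one could hope for a polytopal rule for $o_{\lambda,\mu}^{\nu^c}$ analogous to Theorem~\ref{thm:polytopalLR} and then argue via dilation, provided a shifted saturation statement (analogous to Theorem~\ref{thm:usualsat}) can be established.

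The hard part will be the sufficiency direction. The shifted setting lacks a clean polytopal description of the Littlewood--Richardson rule: the primed entries and the asymmetric treatment of diagonal boxes in shifted semistandard tableaux obstruct the straightforward linearization one exploits in the unshifted case, and a shifted saturation theorem in full generality is not known. One must therefore either develop such a model (a nontrivial task, as the analogue of the Knutson--Tao hive for shifted shapes has no known formulation) or argue through a geometric inductive mechanism akin to Purbhoo's root game for classical root systems, which requires substantial new combinatorial input beyond the tools collected in this chapter.
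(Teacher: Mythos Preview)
The paper does not prove this theorem; it is quoted as a result of Purbhoo--Sottile with references \cite{PS:FPSAC, Purbhoo.Sottile}, so there is no in-paper proof to compare against. What can be assessed is whether your plan could in principle succeed.

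There is a genuine obstruction in your sufficiency strategy. You propose, as one route, to mimic the Klyachko--Knutson--Tao argument: find a polytopal rule for $o_{\lambda,\mu}^{\nu^c}$, dilate, and invoke a shifted saturation statement. But the paper itself records (Remark~\ref{remark:nosaturation}, immediately following the theorem) that shifted saturation is \emph{false}: $o_{(2,1),(2)}^{(3,2)}\neq 0$ while $o_{(4,2),(4)}^{(6,4)}=0$. So this branch of your plan is not merely ``not known''---it is provably blocked. You correctly flag the difficulty at the end, but the text earlier reads as if saturation is an open avenue rather than a closed one.

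Your other proposed route for sufficiency, building a ballot shifted tableau directly from the inequalities via an inductive construction, is not fleshed out enough to evaluate; as written it is a hope rather than an argument. The actual Purbhoo--Sottile proof proceeds geometrically in the general cominuscule setting: they show that for any cominuscule $G/P$ the nonvanishing of a Schubert intersection number is governed by a recursive system of inequalities coming from smaller cominuscule quotients, using a Belkale--Kumar--type analysis of tangent spaces and Levi decompositions rather than any tableau or polytope model. Your necessity sketch (restricting to a smaller orthogonal Grassmannian and reading off codimensions) is in the right spirit, but the embedding you name, ${\sf OG}(n+1,2n+2)\hookrightarrow {\sf Gr}(n+1,2n+2)$, is not the mechanism they use; the recursion stays within the cominuscule world and does not pass through type~$A$ in that way.
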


\begin{Remark}\label{remark:nosaturation}
The obvious analogue of saturation does \emph{not} hold. For example, take $\lambda=(2,1),\mu=(2),\nu=(3,2)$. Then $o_{\lambda,\mu}^{\nu} \neq 0$, but $o_{2\lambda,2\mu}^{2\nu}=o_{(4,2),(4)}^{(6,4)}=0$. \qed
\end{Remark}

Inspired by the complexity results concerning $c_{\lambda,\mu}^{\nu}$, we take this opportunity to pose:

\begin{problem}
Is the decision problem of determining if $o_{\lambda,\mu}^{\nu}\neq 0$ in the class ${\sf P}$ of polynomial time problems?
\end{problem}

Remark~\ref{remark:nosaturation} implies that the argument used in the proofs of Theorems~\ref{thm:LRinP} and~\ref{thm:equivLRinP}
cannot work.

\begin{problem}
Is counting $o_{\lambda,\mu}^{\nu}$ is in the class of $\#${\sf P}-complete problems?
\end{problem}

\section{Equivariant Schubert calculus of $Y$ and $Z$}\label{sec:YZ}

One is interested in the equivariant cohomology of $Y$ and $Z$. 
As with Grassmannians (Section~\ref{sec:Equiv}) , one has structure constants with respect to the Schubert basis,
\[ \xi_{\lambda}(Y)\cdot \xi_{\mu}(Y)=\sum_{\nu\subseteq \rho_n} O_{\lambda,\mu}^{\nu} \xi_{\nu}(Y)
 \text{ \ and \ $ \xi_{\lambda}(Z) \cdot \xi_{\mu}(Z) = \sum_{\nu\subseteq \rho_n} L_{\lambda,\mu}^{\nu} \xi_{\nu}(Z)$.}\]
If $|\lambda|+|\mu|=|\nu|$ then $O_{\lambda,\mu}^{\nu}=o_{\lambda,\mu}^{\nu}$ and $L_{\lambda,\mu}^{\nu}=l_{\lambda,\mu}^{\nu}$. For sake of brevity,
we refer to \cite{Ikeda.Naruse} and the references therein.

Suppose that $H_{\sf T}^{*}(pt)=\mathbb{Z}[t_1,\ldots,t_n]$. The general form of 
Theorem~\ref{thm:Graham} (see the attached footnote to that result) states that if $\gamma_1=t_1$ and for $i>1$, $\gamma_i=t_i-t_{i-1}$, then 
\[O_{\lambda,\mu}^{\nu}\in {\mathbb Z}_{\geq 0}[\gamma_1,\gamma_2,\ldots,\gamma_{n}].\]
Similarly, if $\alpha_1=2{t}_1,\alpha_2={t}_2-{t}_1 \ldots,\alpha_{n}={t}_n-{t}_{n-1}$ then
\[L_{\lambda,\mu}^{\nu}\in {\mathbb Z}_{\geq 0}[\alpha_1,\alpha_2,\ldots,\alpha_{n}].\]

\begin{problem}\label{problem:OG}
Give a combinatorial rule for $O_{\lambda,\mu}^{\nu}$ and/or $L_{\lambda,\mu}^{\nu}$.
\end{problem}

Naturally, we desire a rule in terms of shifted edge labeled tableaux. Such a rule (or any combinatorial rule) has eluded us.
The reader wishing to give Problem~\ref{problem:OG} a try might find Table~\ref{tab:prod} useful.

\begin{table}[t]\label{tabel:eqTypeBC}
\begin{tabular}{ |c|c|c|c|c|c| }
\hline
$\lambda$ & $\mu$ & $\nu$ & $O_{\lambda,\mu}^{\nu}$ & $ L_{\lambda,\mu}^{\nu}$  \\  
\hline\hline
$[1]$ & $[1]$ & $[1]$ & $\gamma_{1}$ & $\alpha_{1}$ \\
$[1]$ & $[1]$ & $[1]$ & $\gamma_{1}$ & $\alpha_{1}$ \\
$[1]$ & $[1]$ & $[2]$ & $1$ & $2$\\
$[1]$ & $[2]$ & $[2]$ & $\gamma_{1} + \gamma_{2}$ & $\alpha_{1} + 2 \alpha_{2}$\\
$[2]$ & $[1]$ & $[2]$ & $\gamma_{1} + \gamma_{2}$ & $\alpha_{1} + 2 \alpha_{2}$ \\
$[2]$ & $[2]$ & $[2]$ & $\gamma_{1} \gamma_{2} + \alpha_{2}^2$ & $\alpha_{1} \alpha_{2} + 2 \alpha_{2}^2$ \\
$[1]$ & $[2]$ & $[2, 1]$ & $1$ & $1$ \\
$[1]$ & $[2, 1]$ & $[2, 1]$ & $2\gamma_{1} + \gamma_{2}$ & $2\alpha_{1} + 2\alpha_{2}$ \\
$[2]$ & $[1]$ & $[2, 1]$ & $1$ & $1$ \\
$[2]$ & $[2]$ & $[2, 1]$ & $2\gamma_{1} + 2\gamma_{2}$ & $\alpha_{1} + 2\alpha_{2}$ \\
$[2]$ & $[2, 1]$ & $[2, 1]$ & $2\gamma_{1}^2 + 3\gamma_{1}\gamma_{2} + \gamma_{2}^2$ & $\alpha_{1}^2 + 3\alpha_{1}\alpha_{2} + 2\alpha_{2}^2$ \\
$[2, 1]$ & $[1]$ & $[2, 1]$ & $2\gamma_{1} + \gamma_{2}$ & $2\alpha_{1} + 2\alpha_{2}$ \\
$[2, 1]$ & $[2]$ & $[2, 1]$ & $2\gamma_{1}^2 + 3\gamma_{1}\gamma_{2} + \gamma_{2}^2$ & $\alpha_{1}^2 + 3\alpha_{1}\alpha_{2} + 2\alpha_{2}^2$ \\
$[2, 1]$ & $[2, 1]$ & $[2, 1]$ & $2\gamma_{1}^3 + 3\gamma_{1}^2\gamma_{2} + \gamma_{1}\gamma_{2}^2$ & $\alpha_{1}^3 + 3\alpha_{1}^2\alpha_{2} + 2\alpha_{1}\alpha_{2}^2$ \\

\hline
\end{tabular}

\caption{Table of products for $n=2$ \label{tab:prod}}
\end{table}

 Let $\tilde{L}_{\lambda,\mu}^{\nu}$ be the polynomial obtained from $L_{\lambda,\mu}^{\nu}$ after substitions $\alpha_1 \mapsto 2\gamma_1$ and $\alpha_i \mapsto \gamma_i$ for $i>1$. 
The following is a refinement of (\ref{eqn:PQAug11}):

\begin{theorem}[\emph{cf.} Theorem~1.1 of \cite{RYY}]\label{thm:typeBCpower2}
$O_{\lambda,\mu}^{\nu} = 2^{\ell(\nu)-\ell(\lambda)-\ell(\mu)} \tilde{L}_{\lambda,\mu}^{\nu}$
\end{theorem}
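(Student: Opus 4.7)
First, note that the substitution $\alpha_1\mapsto 2\gamma_1$ and $\alpha_i\mapsto \gamma_i$ ($i>1$) acts as the identity on $\mathbb{Z}[t_1,\ldots,t_n]$: indeed $\alpha_1 = 2t_1 = 2\gamma_1$ and $\alpha_i = t_i-t_{i-1} = \gamma_i$. Hence $\tilde{L}_{\lambda,\mu}^{\nu} = L_{\lambda,\mu}^{\nu}$ as elements of $\mathbb{Z}[t_1,\ldots,t_n]$, and the claim reduces to the polynomial identity
\[
O_{\lambda,\mu}^{\nu} \;=\; 2^{\ell(\nu)-\ell(\lambda)-\ell(\mu)}\, L_{\lambda,\mu}^{\nu} \quad\text{in } \mathbb{Z}[t_1,\ldots,t_n],
\]
which specializes in the ``classical'' degree $|\lambda|+|\mu|=|\nu|$ to (\ref{eqn:PQAug11}).

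The plan is to run the ``associativity argument'' of Section~\ref{sec:Young} in parallel on $H^{*}_{\sf T}(Y)$ and $H^{*}_{\sf T}(Z)$. Both are free $H^{*}_{\sf T}(pt)$-modules, and by the discussion of Section~\ref{subsection:GKM} their Schubert structure constants are uniquely determined by (a) the diagonal restrictions $\xi_\lambda|_\lambda$ and (b) the equivariant Chevalley coefficients coming from multiplication by $\xi_{(1)}$. Deriving the shifted analogue of the associativity recurrence from $(\xi_{(1)}\cdot\xi_\lambda)\cdot\xi_\mu = \xi_{(1)}\cdot(\xi_\lambda\cdot\xi_\mu)$, I would check the ansatz on (a) and (b), then propagate it by induction on $|\lambda|+|\mu|$.

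For (a), the GKM fixed-point formula expresses each $\xi_\lambda|_\lambda$ as a product of positive roots attached to the boxes of the shifted shape~$\lambda$. The only discrepancy between the $B_n$ and $C_n$ positive root systems is the short/long axial root $t_i$ versus $2t_i$, and exactly $\ell(\lambda)$ boxes of $\lambda$ (the main-diagonal boxes) contribute such a root. Thus $\xi_\lambda(Z)|_\lambda = 2^{\ell(\lambda)}\,\xi_\lambda(Y)|_\lambda$. For (b), the equivariant Pieri--Chevalley rule of Ikeda--Naruse \cite{Ikeda.Naruse} writes $\xi_\lambda\cdot\xi_{(1)} = \xi_\lambda|_{(1)}\,\xi_\lambda + \sum_{\lambda^+} c_{\lambda^+}\,\xi_{\lambda^+}$, where $\lambda^+$ ranges over strict partitions obtained from $\lambda$ by adding one box. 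An off-diagonal addition gives $\ell(\lambda^+)=\ell(\lambda)$ and identical $Y$- and $Z$-coefficients; an on-diagonal addition gives $\ell(\lambda^+)=\ell(\lambda)+1$ and the $Z$-coefficient is twice the $Y$-coefficient. Combined with (a), this matches the ansatz $O_{\lambda,(1)}^\nu = 2^{\ell(\nu)-\ell(\lambda)-1}\,L_{\lambda,(1)}^\nu$, since $\ell((1))=1$.

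The remaining work is to substitute the ansatz into the associativity recurrence and verify that the powers of $2$ telescope. The main obstacle will be precisely this combinatorial bookkeeping: when the recurrence expresses $O_{\lambda,\mu^+}^{\nu}$ in terms of $O_{\lambda,\mu}^{?}$ and Chevalley inputs, $\ell(\mu^+)-\ell(\mu)$ equals $0$ or $1$ depending on whether the added box $\mu^+\setminus\mu$ is off- or on-diagonal, and each summand must receive a compensating factor of $2$ so that after summation the overall exponent collapses to exactly $\ell(\nu)-\ell(\lambda)-\ell(\mu^+)$. I expect this to reduce to a finite case analysis on the diagonal status of the boxes moved in each term of the recurrence, with the top-degree case recovering (\ref{eqn:PQAug11}) automatically.
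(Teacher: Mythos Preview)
Your overall strategy---compare the associativity recurrences for $H_{\sf T}^*(Y)$ and $H_{\sf T}^*(Z)$ via the Chevalley formula, and check the base case via restrictions---is exactly the paper's approach. However, two concrete points in your execution are wrong.

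First, the Chevalley coefficients are reversed. From the Pieri rules the paper records, for $Y$ one has $\xi_{(1)}(Y)\cdot\xi_\lambda(Y)={\tt wt}_Y(\lambda)\xi_\lambda(Y)+\sum_{\lambda^+}\xi_{\lambda^+}(Y)$, while for $Z$ the coefficient of $\xi_{\lambda^+}(Z)$ is $2^{\ell(\lambda)+1-\ell(\lambda^+)}$. Thus an \emph{off-diagonal} addition ($\ell(\lambda^+)=\ell(\lambda)$) gives $Z$-coefficient $2$ and $Y$-coefficient $1$, whereas an \emph{on-diagonal} addition ($\ell(\lambda^+)=\ell(\lambda)+1$) gives both equal to $1$. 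You stated the opposite. With your version, the check $O_{\lambda,(1)}^{\lambda^+}=2^{\ell(\lambda^+)-\ell(\lambda)-1}L_{\lambda,(1)}^{\lambda^+}$ actually fails; with the correct coefficients it works.

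Second, and more seriously, the base case of the recurrence is not $\xi_\lambda|_\lambda$ but $C_{\lambda,\mu}^{\lambda}=\xi_\mu|_\lambda$ for \emph{all} $\mu\subseteq\lambda$: the recurrence (\ref{eqn:Brec}) lets you solve for $O_{\lambda,\mu}^{\nu}$ only when ${\tt wt}_Y(\nu/\lambda)\neq 0$, i.e., when $\nu\neq\lambda$, so the induction on $|\nu/\lambda|$ bottoms out at $\nu=\lambda$. Your ``product of positive roots over the boxes of $\lambda$'' description is valid only for $\xi_\lambda|_\lambda$; the general $\xi_\mu|_\lambda$ is a \emph{sum} over excited Young diagrams, not a single product. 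The paper handles this by invoking Ikeda--Naruse \cite[Theorem~3]{Ikeda.Naruse}, which gives $\xi_\mu(Y)|_\lambda=2^{-\ell(\mu)}\xi_\mu(Z)|_\lambda$ directly; you would need this input, and your argument as written does not supply it. Also note the typo: the equivariant coefficient in the Chevalley formula is $\xi_{(1)}|_\lambda$, not $\xi_\lambda|_{(1)}$.
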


Thus, the $O_{\lambda,\mu}^{\nu}$ and $L_{\lambda,\mu}^{\nu}$ versions of Problem~\ref{problem:OG}
are essentially equivalent.

Theorem~\ref{thm:typeBCpower2} was stated in a weaker form as a conjecture in 
C.~Monical's doctoral thesis \cite[Conjecture~5.1]{Monical}. A proof of a generalization was given in \cite[Theorem~1.1]{RYY}. Below, we offer another proof
that uses a variation of the associativity recurrence alluded to at the end of Section~\ref{subsection:GKM}. This recurrence should be useful
to prove any guessed rule for $O_{\lambda,\mu}^{\nu}$ or $L_{\lambda,\mu}^{\nu}$, so we wish to explicate it here.

\begin{proof}
This will serve as the base case of the associativity recurrence below:
\begin{lemma}
$O_{\lambda,\mu}^{\lambda}=2^{\ell(\nu)-\ell(\lambda)-\ell(\mu)} \tilde{L}_{\lambda,\mu}^{\lambda}$
\end{lemma}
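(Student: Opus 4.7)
The plan is to identify both sides of the identity as explicit equivariant restrictions at a torus-fixed point, and then compare them using known restriction formulas. First, by the $Y$ and $Z$ analogues of Arabia's identity (\ref{eqn:Arabia})---each of which follows \emph{mutatis mutandis} from the usual vanishing/non-vanishing of $\xi_\mu|_\alpha$ at torus-fixed points, facts valid for any generalized flag variety---one has
\[
O_{\lambda,\mu}^{\lambda} = \xi_\mu(Y)|_\lambda \qquad \text{and} \qquad L_{\lambda,\mu}^{\lambda} = \xi_\mu(Z)|_\lambda.
\]
Thus the lemma reduces to the assertion that the pointwise restriction for $Y$ equals $2^{-\ell(\mu)}$ times the substituted pointwise restriction for $Z$ (reading the exponent $\ell(\nu)-\ell(\lambda)-\ell(\mu)$ in the stated identity with $\nu=\lambda$).

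Next, apply the Ikeda--Naruse excited shifted Young diagram formulas (see \cite[Section 6]{Ikeda.Naruse} and associated references) to each of the two restrictions. Both expressions are sums over the \emph{same} index set of excited shifted diagrams of $\mu$ inside $\lambda$, with each box contributing a linear form in the $t_i$. After the substitution $\alpha_1\mapsto 2\gamma_1$, $\alpha_i\mapsto\gamma_i$ (for $i>1$) that defines $\tilde{L}$, the two formulas agree term-by-term \emph{except} that each box of an excited diagram lying on the main diagonal of the shifted shape contributes an extra factor of $2$ in the type $C$ (Lagrangian) weighting that is absent in type $B$ (orthogonal).

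Finally, observe that in the Ikeda--Naruse formalism, every excited diagram of $\mu$ in $\lambda$ contains exactly $\ell(\mu)$ boxes on the main diagonal: excited moves cannot displace a diagonal cell off the diagonal, so the original $\ell(\mu)$ diagonal boxes of $\mu$ persist as the diagonal boxes of every excited diagram. Consequently each term of the type $C$ sum differs from the corresponding term of the type $B$ sum by exactly the common factor $2^{\ell(\mu)}$, yielding
\[
\tilde{L}_{\lambda,\mu}^{\lambda} = 2^{\ell(\mu)}\, O_{\lambda,\mu}^{\lambda},
\]
which is the claim. The main obstacle is the careful bookkeeping of the Ikeda--Naruse formulas across the two types: verifying that after the prescribed substitution the sole structural discrepancy between the $B$ and $C$ restriction formulas is indeed the diagonal-box factor of $2$, and confirming that no stray factors of $2$ appear in the off-diagonal weights or in the excited-diagram indexing sets themselves.
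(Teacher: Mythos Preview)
Your proof is correct and follows essentially the same approach as the paper: both arguments first invoke the Arabia-type identity to rewrite $O_{\lambda,\mu}^{\lambda}=\xi_{\mu}(Y)|_{\lambda}$ and $L_{\lambda,\mu}^{\lambda}=\xi_{\mu}(Z)|_{\lambda}$, then appeal to Ikeda--Naruse to obtain $\xi_{\mu}(Y)|_{\lambda}=2^{-\ell(\mu)}\xi_{\mu}(Z)|_{\lambda}$. The only difference is that the paper cites this last relation as a black box (\cite[Theorem~3]{Ikeda.Naruse}), whereas you unpack its proof via the excited Young diagram formulas and the observation that diagonal boxes are preserved under excitation moves; your extra detail is correct and does not deviate from the paper's strategy.
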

\begin{proof}
By the same reasoning as the derivation of (\ref{eqn:Arabia}), we have $L_{\lambda,\mu}^{\lambda}=\xi_{\mu}(Z)|_{\lambda}$ and $O_{\lambda,\mu}^{\lambda}=\xi_{\mu}(Y)|_{\lambda}$. The lemma this holds since, by \cite[Theorem 3]{Ikeda.Naruse}, $\xi_{\mu}(Y)|_{\lambda}=2^{-\ell(\mu)} \xi_{\mu}(Z)|_{\lambda}$. 
\end{proof}

Assign weights to each box of the staircase $\rho_n$ as follows. For $Y$, the boxes on the main diagonal are assigned
weight $\gamma_1$. The boxes on the next diagonal are assigned $\gamma_2$, etc. For $Z$, the boxes on the main diagonal
are assigned $\alpha_1$ whereas the boxes on the second diagonal are assigned $2\alpha_2$, and the third diagonal $2\alpha_3$, etc.
Let $\beta_Y:=\rho_n\to \{\gamma_i\}$ and $\beta_Z:\rho_n\to \{\alpha_1,2\alpha_2,\ldots, 2\alpha_n\}$ be
these two assignments.

Thus, when $n=3$ the assignment is
\[\tableauL{\gamma_1 & \gamma_2 & \gamma_3\\  & \gamma_1 &  \gamma_2 \\   &  & \gamma_1} \text{\ \ \ (for $Y$) \ \ \ and  \ \ \ }
\tableauL{\alpha_1 & 2\alpha_2 & 2\alpha_3\\  & \alpha_1 &  2\alpha_2 \\   &  & \alpha_1}  \text{\ \ \ (for $Z$).}
\] 
For a straight shape $\lambda\subseteq \rho_n$, define 
\[{\tt wt}_{Y}(\lambda)=\sum_{x\in\lambda} \beta_Y(x).\]
For a skew shape $\nu/\lambda\subseteq \rho_n$, 
\[{\tt wt}_{Y}(\nu/\lambda):={\tt wt}_Y(\nu)-{\tt wt}_Y(\lambda).\]
Similarly, one defines ${\tt wt}_Z(\nu/\lambda)$.

Let $\lambda^+$ be $\lambda$ with a box added. Also let $\nu^{-}$ be $\nu$ with a box removed. We claim that
\begin{equation}
\label{eqn:Brec}
\sum_{\lambda^+} O_{\lambda^+,\mu}^{\nu}=O_{\lambda,\mu}^{\nu}{\tt wt}_Y(\nu/\lambda)+\sum_{\nu^-} O_{\lambda,\mu}^{\nu^-}.
\end{equation}
This is proved by the considering the associativity relation 
\[(\xi_{\lambda}(Y) \cdot \xi_{(1)}(Y))\cdot \xi_{\mu}(Y)=\xi_{\lambda}(Y) \cdot (\xi_{(1)}(Y)\cdot \xi_{\mu}(Y)),\]
and using the Pieri rule for $Y$: 
\begin{equation}
\label{eqn:PieriBbox}
\xi_{(1)}(Y) \cdot \xi_{\lambda}(Y) = {\tt wt}_Y(\lambda)\xi_{\lambda}(Y)+\sum_{\lambda^+} \xi_{\lambda^+}(Y).
\end{equation}
The proof of (\ref{eqn:PieriBbox}) can be obtained starting with the same reasoning as the derivation of (\ref{eqn:equivpieriabc}). Alternatively, it can be
deduced by specializing more general formulas such as C.~Lenart-A.~Postnikov's \cite[Corollary~1.2]{Lenart.Postnikov}.

Similarly, the Pieri rule for $Z$ reads
\[\xi_{(1)}(Z) \cdot \xi_{\lambda}(Z)={\tt wt}_Z(\lambda)\xi_{\lambda}(Z)+\sum_{\lambda^+} 2^{\ell(\lambda)+1-\ell(\lambda^+)} \xi_{\lambda^+}(Z).\]
Consequently, by the same reasoning we obtain
\begin{equation}
\label{eqn:Crec}
\sum_{\lambda^+} L_{\lambda^+,\mu}^{\nu}2^{\ell(\lambda)+1-\ell(\lambda^+)}=L_{\lambda,\mu}^{\nu}{\tt wt}_Z(\nu/\lambda)+\sum_{\nu^-} L_{\lambda,\mu}^{\nu^-}2^{\ell(\nu^-)+1-\ell(\nu)}.
\end{equation}

Now, to complete the proof by induction we start from (\ref{eqn:Crec}). This is an identity of polynomials and remains so after
the substitution $\alpha_1\mapsto 2\gamma_1$ and $\alpha_i\mapsto \gamma_i$ for $i>1$. That is,
\begin{equation}
\label{eqn:Crec'}
\sum_{\lambda^+} {\widetilde L}_{\lambda^+,\mu}^{\nu}2^{\ell(\lambda)+1-\ell(\lambda^+)}={\widetilde L}_{\lambda,\mu}^{\nu}{\widetilde {\tt wt}_Z(\nu/\lambda)}+\sum_{\nu^-} {\widetilde L}_{\lambda,\mu}^{\nu^-}2^{\ell(\nu^-)+1-\ell(\nu)},
\end{equation}
where ${\widetilde {\tt wt}_Z(\nu/\lambda)}$ is ${\tt wt}_Z(\nu/\lambda)$ with the same substitution. Note that 
\begin{equation}
\label{eqn:halfabc}
\frac{1}{2}{\widetilde {\tt wt}_Z(\nu/\lambda)}={\tt wt}_Y(\nu/\lambda).
\end{equation}
Now multiply both sides of (\ref{eqn:Crec'}) by $\frac{1}{2}\times 2^{\ell(\nu)-\ell(\lambda)-\ell(\mu)}$. This gives
\begin{equation}
\label{eqn:Crec''}
\sum_{\lambda^+} {\widetilde L}_{\lambda^+,\mu}^{\nu}2^{\ell(\nu)-\ell(\lambda^+)-\ell(\mu)}=2^{\ell(\nu)-\ell(\lambda)-\ell(\mu)}{\widetilde L}_{\lambda,\mu}^{\nu}{\tt wt}_Y(\nu/\lambda)+\sum_{\nu^-} {\widetilde L}_{\lambda,\mu}^{\nu^-}2^{\ell(\nu^-)-\ell(\lambda)-\ell(\mu)}.
\end{equation}
By induction,
\begin{equation}
\label{eqn:Crec'''}
\sum_{\lambda^+} {O}_{\lambda^+,\mu}^{\nu}=2^{\ell(\nu)-\ell(\lambda)-\ell(\mu)}{\widetilde L}_{\lambda,\mu}^{\nu}{\tt wt}_Y(\nu/\lambda)+\sum_{\nu^-} {O}_{\lambda,\mu}^{\nu^-}.
\end{equation}
 Comparing (\ref{eqn:Crec'''}) and (\ref{eqn:Brec}) we deduce that
 $O_{\lambda,\mu}^{\nu}=2^{\ell(\nu)-\ell(\lambda)-\ell(\mu)}{\widetilde L}_{\lambda,\mu}^{\nu}$, as needed.
\end{proof}

Turning to nonvanishing, clearly:
\begin{corollary}
\label{cor:Aug8abc}
$[\gamma_1^{i_1}\cdots \gamma_n^{i_n}]O_{\lambda,\mu}^{\nu}\neq 0 \iff [\alpha_1^{i_1}\cdots \alpha_n^{i_n}]L_{\lambda,\mu}^{\nu}\neq 0$; in particular
$O_{\lambda,\mu}^{\nu}\neq 0 \iff L_{\lambda,\mu}^{\nu}\neq 0$.
\end{corollary}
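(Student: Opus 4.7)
The plan is to derive the corollary directly from Theorem~\ref{thm:typeBCpower2}, which gives the relation $O_{\lambda,\mu}^{\nu}=2^{\ell(\nu)-\ell(\lambda)-\ell(\mu)}\widetilde{L}_{\lambda,\mu}^{\nu}$, where $\widetilde{L}_{\lambda,\mu}^{\nu}$ is obtained from $L_{\lambda,\mu}^{\nu}$ by the variable substitution $\alpha_1\mapsto 2\gamma_1$, $\alpha_i\mapsto \gamma_i$ for $i>1$. The essential observation is that this substitution is monomial-to-monomial and scales each monomial by a strictly positive factor, so no cancellation can occur \emph{provided} the coefficients of $L_{\lambda,\mu}^{\nu}$ in the $\alpha$-basis all have the same sign.

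First I would invoke the equivariant positivity statement recorded at the beginning of Section~\ref{sec:YZ}: by (the type $C$ instance of) Graham's theorem, we have $L_{\lambda,\mu}^{\nu}\in \mathbb{Z}_{\geq 0}[\alpha_1,\ldots,\alpha_n]$. Writing
\[L_{\lambda,\mu}^{\nu}=\sum_{(j_1,\ldots,j_n)} c_{j_1,\ldots,j_n}\,\alpha_1^{j_1}\cdots \alpha_n^{j_n}\qquad \text{with } c_{j_1,\ldots,j_n}\in \mathbb{Z}_{\geq 0},\]
the substitution sends each $\alpha_1^{j_1}\cdots \alpha_n^{j_n}$ to $2^{j_1}\gamma_1^{j_1}\cdots \gamma_n^{j_n}$. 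Because distinct monomials in the $\alpha_i$ map to distinct monomials in the $\gamma_i$ and the scaling factors $2^{j_1}$ are positive, the coefficient extraction commutes with the substitution up to this factor:
\[[\gamma_1^{i_1}\cdots \gamma_n^{i_n}]\widetilde{L}_{\lambda,\mu}^{\nu}=2^{i_1}\,c_{i_1,\ldots,i_n}=2^{i_1}\cdot [\alpha_1^{i_1}\cdots \alpha_n^{i_n}]L_{\lambda,\mu}^{\nu}.\]

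Combining this with Theorem~\ref{thm:typeBCpower2} yields
\[[\gamma_1^{i_1}\cdots \gamma_n^{i_n}]O_{\lambda,\mu}^{\nu}=2^{\ell(\nu)-\ell(\lambda)-\ell(\mu)+i_1}\cdot [\alpha_1^{i_1}\cdots \alpha_n^{i_n}]L_{\lambda,\mu}^{\nu},\]
and since $2^{k}$ is a nonzero rational for every integer $k$, the two coefficients vanish simultaneously. The ``in particular'' clause then follows by summing over all exponent vectors: $O_{\lambda,\mu}^{\nu}\neq 0$ exactly when some $\gamma$-monomial coefficient is nonzero, which by the displayed identity is equivalent to some $\alpha$-monomial coefficient of $L_{\lambda,\mu}^{\nu}$ being nonzero, i.e.\ $L_{\lambda,\mu}^{\nu}\neq 0$.

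There is no real obstacle here; the content is entirely in Theorem~\ref{thm:typeBCpower2} and Graham positivity. The only point that requires a moment's care is confirming that the monomial-by-monomial nature of the substitution together with sign-coherence of the coefficients precludes cancellation, so that passing through the change of variables preserves the support of the polynomial (up to multiplication of each coefficient by a positive power of $2$).
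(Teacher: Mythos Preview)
Your argument is correct and follows the same route as the paper, which simply records the corollary as an immediate consequence of Theorem~\ref{thm:typeBCpower2}. One small comment: your appeal to Graham positivity is unnecessary. The substitution $\alpha_1\mapsto 2\gamma_1$, $\alpha_i\mapsto\gamma_i$ ($i>1$) sends distinct $\alpha$-monomials to distinct $\gamma$-monomials (the map on exponent vectors is the identity), so the identity
\[[\gamma_1^{i_1}\cdots \gamma_n^{i_n}]\widetilde{L}_{\lambda,\mu}^{\nu}=2^{i_1}\,[\alpha_1^{i_1}\cdots \alpha_n^{i_n}]L_{\lambda,\mu}^{\nu}\]
holds regardless of the signs of the coefficients; there is no opportunity for cancellation.
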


Moreover, C.~Monical \cite{Monical} gave a conjectural equivariant extension of Theorem~\ref{thm:PurbhooSottile}.
\begin{conjecture}[C.~Monical's Horn-type conjecture]\label{conj:Monical}
For $\lambda,\mu,\nu\subseteq \rho_n$ (and not a smaller staircase), $O_{\lambda,\mu}^{\nu^c}\neq 0$ if and only if
for $k=|\lambda|+|\mu|+|\nu|-\binom{n+1}{2}$,
\begin{itemize}
\item $k\geq 0$, and
\item for all $0<r<n$ and all $\alpha,\beta,\gamma\subseteq r\times (n-r)$ with $|\alpha|+|\beta|+|\gamma|=r(n-r)$ and
$c_{\alpha,\beta}^{\gamma^c}\neq 0$ we have $[\lambda]_{\alpha}+[\mu]_{\beta}+[\nu]_{\gamma}-k\leq \binom{n+1-r}{2}$. 
\end{itemize}
\end{conjecture}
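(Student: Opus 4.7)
The plan is to adapt the proof of Theorem~\ref{thm:equivHorn} from \cite{ARY} for Grassmannians to the current setting, with Theorem~\ref{thm:PurbhooSottile} (Purbhoo--Sottile) playing the role that Theorem~\ref{thm:classicalHorn} played there. This reduces the task to establishing shifted analogues of Claim~\ref{claim:goingdown} (``going down'') and Claim~\ref{claim:goingup} (``going up'') for the coefficients $O_{\lambda,\mu}^{\nu^c}$: namely, (a) if $O_{\lambda,\mu}^{\nu^c}\neq 0$ and $k>0$, then $O_{\lambda,\mu^{\downarrow}}^{\nu^c}\neq 0$ for some $\mu^{\downarrow}\subsetneq\mu$ (and symmetrically in $\lambda$); and (b) if $O_{\lambda,\mu^{\downarrow}}^{\nu^c}\neq 0$ with $\mu^{\downarrow}\subseteq\mu\subseteq\rho_n$, then $O_{\lambda,\mu}^{\nu^c}\neq 0$.

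For the forward direction ($O_{\lambda,\mu}^{\nu^c}\neq 0 \Rightarrow$ inequalities), observe first that $\deg O_{\lambda,\mu}^{\nu^c}=|\lambda|+|\mu|-|\nu^c|=k$ together with Graham positivity forces $k\geq 0$. Iterating (a), reduce $\mu$ (or $\lambda$, using symmetry $O_{\lambda,\mu}^{\nu^c}=O_{\mu,\lambda}^{\nu^c}$) to some $\mu^{\downarrow}\subseteq\mu$ with $|\lambda|+|\mu^{\downarrow}|=|\nu^c|$, maintaining $O_{\lambda,\mu^{\downarrow}}^{\nu^c}\neq 0$. At this point the coefficient coincides with the classical $o_{\lambda,\mu^{\downarrow}}^{\nu^c}$, and Theorem~\ref{thm:PurbhooSottile} yields $[\lambda]_\alpha+[\mu^{\downarrow}]_\beta+[\nu]_\gamma\leq\binom{n+1-r}{2}$ for all applicable $(r,\alpha,\beta,\gamma)$. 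Each of the $k$ boxes of $\mu\setminus\mu^{\downarrow}$ contributes at most $1$ to the difference $[\mu]_\beta-[\mu^{\downarrow}]_\beta$, so $[\mu]_\beta\leq[\mu^{\downarrow}]_\beta+k$, producing the conjectured inequality.

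For the backward direction, mirror the polytopal argument of \cite{ARY}: given the Horn inequalities with $k\geq 0$, one must select $\mu^{\downarrow}\subseteq\mu$ of size $|\nu^c|-|\lambda|$ such that the classical Horn inequalities remain valid for $(\lambda,\mu^{\downarrow},\nu)$. Purbhoo--Sottile then delivers $o_{\lambda,\mu^{\downarrow}}^{\nu^c}=O_{\lambda,\mu^{\downarrow}}^{\nu^c}\neq 0$, and applying (b) lifts back to $O_{\lambda,\mu}^{\nu^c}\neq 0$. A subtlety here is that the candidate $\mu^{\downarrow}$ must remain a strict partition inside $\rho_n$, so the selection procedure must be compatible with this shape constraint in addition to preserving the inequalities.

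The \emph{main obstacle} is proving (a) and (b). In the Grassmannian case, \cite{ARY} extracts them from the edge labeled ballot rule (Theorem~\ref{thm:ty}) by locally manipulating edge labels to modify content while tracking nonzero weight. The shifted analogue---a combinatorial rule for $O_{\lambda,\mu}^{\nu}$, or equivalently (by Corollary~\ref{cor:Aug8abc} and Theorem~\ref{thm:typeBCpower2}) for $L_{\lambda,\mu}^{\nu}$---is exactly Problem~\ref{problem:OG}; the shifted edge labeled tableaux of Section~9 are the natural candidate. A proof of Conjecture~\ref{conj:Monical} along these lines thus seems tied to first establishing that conjectural rule. An alternative route is to derive (a) and (b) directly from the associativity recurrence~\eqref{eqn:Brec} by inductively tracking which structure constants vanish, but without a tableau model this appears substantially harder. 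A cleaner (and potentially sharper) statement in the spirit of \cite{Fulton} might also be obtainable by replacing ``$c_{\alpha,\beta}^{\gamma^c}\neq 0$'' by ``$c_{\alpha,\beta}^{\gamma^c}=1$'' in the conjecture, mirroring Belkale's minimality for the classical Horn inequalities.
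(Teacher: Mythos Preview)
The statement you are attempting to prove is labeled in the paper as a \emph{conjecture}, not a theorem: the paper offers no proof, only the remark that C.~Monical checked it exhaustively for $\lambda,\mu,\nu\subseteq\rho_5$. There is therefore nothing in the paper to compare your proposal against.

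Your outline is a reasonable strategy---mirror the Anderson--Richmond--Yong argument for Grassmannians, with Purbhoo--Sottile replacing the classical Horn theorem---and you correctly identify its central obstruction: the shifted analogues of ``going down'' and ``going up'' (your (a) and (b)) would require a positive combinatorial rule for $O_{\lambda,\mu}^{\nu}$ or $L_{\lambda,\mu}^{\nu}$, which is precisely the open Problem~\ref{problem:OG}. The paper makes this dependency explicit elsewhere (Section~9's shifted edge labeled tableaux are proposed as a candidate model, but the rule is itself conjectural). Your forward-direction bound $[\mu]_\beta\leq[\mu^{\downarrow}]_\beta+k$ is fine, and you rightly flag the strict-partition constraint on $\mu^{\downarrow}$ in the backward direction as an additional wrinkle not present in type~$A$. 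In short: your proposal is not a proof but a coherent research plan, and the paper agrees that the conjecture remains open pending exactly the ingredients you name.
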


In \emph{loc.~cit.}, C.~Monical reports checking this conjecture for all $\lambda,\mu,\nu\subseteq \rho_5$. Now,
from Corollary~\ref{cor:Aug8abc} we obtain:
\begin{corollary}[\emph{cf.} Conjecture~5.3 of \cite{Monical}]
C.~Monical's inequalities characterize $O_{\lambda,\mu}^{\nu}\neq 0$ if and only if they characterize $L_{\lambda,\mu}^{\nu}\neq 0$.
\end{corollary}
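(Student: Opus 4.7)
The plan is to observe that this corollary is essentially immediate from Corollary~\ref{cor:Aug8abc}, and to expand on why no further work is required. C.~Monical's inequalities, as formulated in Conjecture~\ref{conj:Monical}, depend only on the triple $(\lambda,\mu,\nu)$ (together with the ambient $n$); they involve no reference to the torus variables $\gamma_i$ or $\alpha_i$ and no distinction between type $B$ and type $C$. Thus the statement ``Monical's inequalities characterize $O_{\lambda,\mu}^{\nu}\neq 0$'' is the biconditional, for all $\lambda,\mu,\nu\subseteq \rho_n$, between $O_{\lambda,\mu}^{\nu}\neq 0$ and Monical's inequality list; likewise for $L_{\lambda,\mu}^{\nu}$.

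First I would spell this out: let $\mathrm{Mon}(\lambda,\mu,\nu)$ denote the predicate ``the inequalities of Conjecture~\ref{conj:Monical} hold for $(\lambda,\mu,\nu)$.'' The claim that Monical's inequalities characterize $O_{\lambda,\mu}^{\nu}\neq 0$ is the universally quantified statement
\begin{equation*}
\forall (\lambda,\mu,\nu)\subseteq \rho_n:\ O_{\lambda,\mu}^{\nu}\neq 0 \iff \mathrm{Mon}(\lambda,\mu,\nu),
\end{equation*}
and the analogous statement for $L_{\lambda,\mu}^{\nu}$ replaces the left side by $L_{\lambda,\mu}^{\nu}\neq 0$.

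Second I would invoke Corollary~\ref{cor:Aug8abc}, which gives $O_{\lambda,\mu}^{\nu}\neq 0 \iff L_{\lambda,\mu}^{\nu}\neq 0$ for every triple $(\lambda,\mu,\nu)$. Substituting this equivalence into either of the two displayed characterizations yields the other, so the two characterizations are logically equivalent. There is no obstacle here: the content has already been done in Theorem~\ref{thm:typeBCpower2}, whose positivity assertion $2^{\ell(\nu)-\ell(\lambda)-\ell(\mu)}\geq 1$ combined with the nonnegativity of the substituted polynomial $\widetilde{L}_{\lambda,\mu}^{\nu}$ in the $\gamma_i$'s forces the nonvanishing of $O$ and $L$ to agree. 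In short, the proof is a one-line deduction, and the main point of including it is to record the formal consequence of Theorem~\ref{thm:typeBCpower2} for C.~Monical's Horn-type program: verifying Conjecture~\ref{conj:Monical} for the orthogonal case automatically resolves the Lagrangian case, and vice versa.
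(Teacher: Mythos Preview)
Your proof is correct and matches the paper's own derivation: the paper simply states ``from Corollary~\ref{cor:Aug8abc} we obtain'' the result, and you have spelled out exactly that one-line deduction. One minor slip in your parenthetical aside: the exponent $\ell(\nu)-\ell(\lambda)-\ell(\mu)$ can be negative (e.g., $\lambda=\mu=(2,1)$, $\nu=(3,2,1)$ gives exponent $-1$), so $2^{\ell(\nu)-\ell(\lambda)-\ell(\mu)}\geq 1$ is false in general; what matters is only that this power of $2$ is nonzero, together with the fact that the substitution $\alpha_1\mapsto 2\gamma_1,\ \alpha_i\mapsto\gamma_i$ is invertible, so $\widetilde{L}_{\lambda,\mu}^{\nu}\neq 0\iff L_{\lambda,\mu}^{\nu}\neq 0$.
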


\section{Shifted edge labeled tableaux}\label{sec:shiftededge}

In this section, we define \emph{shifted edge labeled tableaux}. At present, we do not know a good theory when edge labels are permitted
on arbitrary horizontal edges. However, our central new idea is to \emph{restrict edge labels to diagonal boxes}.
This restriction gives rise to a combinatorial
rule which defines a commutative and (conjecturally) associative ring.

\subsection{Main definitions} If $\mu\subseteq \lambda$, then $\lambda/\mu$ is the skew-shape consisting of boxes
of $\lambda$ not in $\mu$. The boxes in matrix position $(i,i)$ are the \emph{diagonal boxes}. A \emph{diagonal edge} of $\lambda/\mu$ 
refers to the southern edge of a diagonal box of $\lambda$. If $\mu=\emptyset$, we call $\lambda=\lambda/\mu$ a \emph{straight shape}.

For example if $\lambda=(6,3,1)$ and $\mu=(3,1)$, the shape $\lambda/\mu$ consists of the six unmarked boxes shown below
\[\tableauS{X & X &X  & \ & \ &\ \\  & X &\ &\ \\ & & \ }.\]
This has one diagonal box but three diagonal edges.

A \emph{shifted edge labeled tableau} of shape $\lambda/\mu$ is a filling of the boxes of $\lambda/\mu$ and southern edges of the diagonal boxes with the labels $[N]=\{1,2,3,\ldots,N\}$ such that:
\begin{itemize}
\item[(S1)] Every box of $\lambda/\mu$ is filled.
\item[(S2)] Each \emph{diagonal} edge contains a (possibly empty) subset of $[N]$.
\item[(S3)] $1,2,\ldots,N$ appears exactly once.
\item[(S4)] The labels strictly increase left to right along rows and top to bottom along columns. In particular, each label of a diagonal edge is strictly larger
than the box labels in the same column.
\end{itemize}
These conditions imply that $N\geq |\lambda/\mu|$. Let ${\sf eqShSYT}(\lambda/\mu,N)$ be the set of
all such tableaux. If we restrict to tableaux satisfying only (S1), (S3) and (S4), then $N=|\lambda/\mu|$ and we obtain the 
notion of shifted standard Young tableaux from Section~\ref{sec:PQstuff}. 

An \emph{inner corner} ${\sf c}$ of $\lambda/\mu$ is a maximally southeast box of $\mu$. For $T \in {\sf eqShSYT}(\lambda/\mu,N)$, we define a \emph{(shifted, edge labeled) jeu de taquin slide} ${\sf shEjdt}_{\sf c}(T)$, obtained as follows. Initially place $\bullet$ in ${\sf c}$, and apply one of the following \emph{slides}, depending on what $T$ looks like locally around ${\sf c}$:
\begin{itemize}
\item[(J1)] $\tableauS{\bullet & a\\ b}\mapsto \tableauS{b & a\\ \bullet }$ (if $b<a$, or $a$ does not exist)
\smallskip
\item[(J2)] $\tableauS{\bullet & a\\ b}\mapsto \tableauS{a & \bullet\\ b}$ (if $a<b$, or $b$ does not exist)
\smallskip
\item[(J3')] $\begin{picture}(40,30)\put(0,0){$\tableauS{\bullet & a}$}
\put(3,-5){$S$}
\end{picture} \mapsto
\begin{picture}(40,30)\put(0,0){$\tableauS{a & \bullet }$}
\put(3,-5){$S$}
\end{picture}$ (if ${\sf c}$ is a \emph{diagonal} box and $a<\min(S)$)

\item[(J4')] $\begin{picture}(40,30)\put(0,0){$\tableauS{\bullet & a}$}
\put(3,-5){$S$}
\end{picture} \mapsto
\begin{picture}(40,30)\put(0,0){$\tableauS{s & a }$}
\put(3,-5){$S'$}
\end{picture}$ (if ${\sf c}$ is a \emph{diagonal} box, $s:=\min(S)<a$ and $S':=S\setminus \{s\}$)
\end{itemize}
Repeat the above sliding procedure on the new box ${\sf c}'$ containing the new position $\bullet$ until $\bullet$ arrives at a box or diagonal edge ${\sf d}$ 
of $\lambda$ that has no labels immediately south or east of it. Then ${\sf shEjdt}_{\sf c}(T)$ is obtained by erasing $\bullet$.

A \emph{rectification} of $T\in {\sf eqShSYT}(\lambda/\mu,N)$ is defined as usual: Choose an inner corner ${\sf c}_0$ of $\lambda/\mu$ and compute
$T_1:={\sf shEjdt}_{{\sf c}_0}(T)$, which has shape $\lambda^{(1)}/\mu^{(1)}$. Now let ${\sf c}_1$ be an inner corner of $\lambda^{(1)}/\mu^{(1)}$ and compute
$T_2:={\sf shEjdt}_{{\sf c}_1}(T_1)$. Repeat $|\mu|$ times, arriving at a standard tableau of straight shape. Let ${\sf shEqRect}_{\{{\sf c}_i\}}(T)$ be this tableau.

In general, ${\sf shEqRect}$ is not independent of rectification order, when $N>|\lambda/\mu|$:

\begin{example}
The reader can check that  if one uses column rectification order
(picking the rightmost inner corner at each step) then
\[\begin{picture}(100,50) 
\put(5,35){$\tableauL{{\ }&{\ }&{\ }\\ & {\ }&{1} \\ & & {2}}$}
\put(50,-8){$3$}
\end{picture}\]
rectifies to $\tableauS{1 & 2 & 3}$ while row rectification (choosing the southmost inner corner at each step) gives $\tableauS{1 &2\\ &3}$. \qed
\end{example}

We will define ${\sf shEqRect}(T)$ to be the rectification under row rectification order.

\subsection{A (putative) commutative ring structure}
Let $S_{\mu}$ be the superstandard tableau of shifted shape $\mu$, which is obtained by filling the boxes of $\mu$ in English reading order
with $1,2,3,\ldots$. For example, 
\[S_{(5,3,1)}=\tableauS{1&2&3&4&5\\  &6&7&8\\ & & 9}.\]
Define
\[d_{\lambda,\mu}^{\nu}:=\#\{T\in {\sf eqShSYT}(\nu/\lambda,|\mu|): {\sf shEqRect}(T)=S_{\mu}\}.\]
Let
\[\Delta(\nu;\lambda,\mu):=|\lambda|+|\mu|-|\nu|
\text{ \ and $L(\nu;\lambda,\mu):=\ell(\lambda)+\ell(\mu)-\ell(\nu)$.}\]
Introduce an indeterminate $z$ and set 
\[D_{\lambda,\mu}^{\nu}:=2^{L(\nu;\lambda,\mu)-\Delta(\nu;\lambda,\mu)}z^{\Delta(\nu;\lambda,\mu)}d_{\lambda,\mu}^{\nu}.\]

 Next we define formal symbols $[\lambda]$ for each $\lambda\subseteq \rho_n$. Let $R_n$ be the free ${\mathbb Z}[z]$-module generated by these.
We declare a product structure on $R_n$ by
\[[\lambda]\star [\mu] = \sum_{\nu} D_{\lambda,\mu}^{\nu} [\nu].\]

\begin{table}[t]
\begin{tabular}{ |c|c|c| }
\hline
$\lambda$ & $\mu$ & $\lambda \star \mu $ \\  
\hline\hline
$[1]$ & $[1]$ & $z[1] + 2[2]$\\
$[1]$ & $[2]$ & $z[2] + [2,1] + 2[3]$\\
$[1]$ & $[2, 1]$ & $2z[2,1] + 2[3,1]$\\
$[1]$ & $[3]$ & $z[3] + [3,1] $\\
$[1]$ & $[3, 1]$ & $2z[3,1] + 2[3,2]$\\
$[1]$ & $[3, 2]$ & $2z[3,2] + [3,2,1]$\\
$[1]$ & $[3, 2, 1]$ & $3z[3,2,1] $\\
$[2]$ & $[2]$ & $z[2,1] + z[3] + 2[3,1]$\\
$[2]$ & $[2,1]$ & $ z^2[2,1] + 3z[3,1] + 2[3,2]$\\
$[2]$ & $[3]$ & $z[3,1] + [3,2]$\\
$[2]$ & $[3,1]$ & $z^2[3,1] + 3z[3,2] + [3,2,1]$ \\
$[2]$ & $[3, 2]$ & $z^2[3,2] + 2z[3,2,1]$\\
$[2]$ & $[3, 2, 1]$ & $3z^2[3,2,1]$\\
$[2,1]$ & $[2,1]$ & $z^3[2,1] + 3z^2[3,1] + 6z[3,2] $ \\ 
$[2,1]$ & $[3]$ & $z^2[3,1] + z[3,2] + [3,2,1]$\\
$[2,1]$ & $[3,1]$ & $z^3[3,1]  + 3z^2[3,2] + 3z[3,2,1]$  \\ 
$[2,1]$ & $[3,2]$ & $z^3[3,2] + 3z^2[3,2,1]$\\
$[2,1]$ & $[3,2,1]$ & $4z^3[3,2,1]$\\
$[3]$ & $[3]$ & $z[3,2]$\\
$[3]$ & $[3,1]$ & $ z^2[3,2] + z[3,2,1]$ \\
$[3]$ & $[3, 2]$ & $z^2[3,2,1]$\\
$[3]$ & $[3,2,1]$ & $z^3[3,2,1]$\\
$[3,1]$ & $[3,1]$ & $z^3[3,2] + 3z^2[3,2,1]$\\
$[3,1]$ & $[3,2]$ & $2z^3[3,2,1]$ \\
$[3,1]$ & $[3,2,1]$ & $2z^4[3,2,1]$\\
$[3,2]$ & $[3,2]$ & $z^4[3,2,1]$\\
$[3,2]$ & $[3,2,1]$ & $z^5[3,2,1]$\\
$[3,2,1]$ & $[3,2,1]$ & $z^6[3,2,1]$\\
\hline
\end{tabular}
\caption{Table of products for $n=3$}
\end{table}

While positivity of $D_{\lambda,\mu}^{\nu}$ is immediate from the definition, the following is not:
\begin{conjecture}
\label{conj:integral}
$D_{\lambda,\mu}^{\nu} \in \mathbb{Z}[z]$.
\end{conjecture}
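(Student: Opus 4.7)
My plan is to prove the equivalent divisibility statement $2^{\max(0,\Delta - L)} \mid d_{\lambda,\mu}^\nu$, where I abbreviate $\Delta := \Delta(\nu;\lambda,\mu)$ and $L := L(\nu;\lambda,\mu)$. When $L \geq \Delta$, this is automatic and $D_{\lambda,\mu}^\nu$ is manifestly in $\mathbb{Z}_{\geq 0}[z]$; the substantive case is $\Delta > L$, where $2^{L-\Delta}$ is a fraction and nontrivial cancellation must occur in $d_{\lambda,\mu}^\nu$.

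I would first dispose of the base case $\Delta = 0$. Here the shifted edge labeled rule collapses to Worley's shifted jeu de taquin rule (Theorem~\ref{thm:shiftedjdt}), so $d_{\lambda,\mu}^\nu = o_{\lambda,\mu}^\nu$. Substituting $P_\lambda = 2^{-\ell(\lambda)}Q_\lambda$ (from \eqref{eqn:PQ}) into $Q_\lambda Q_\mu = \sum_\nu l_{\lambda,\mu}^\nu Q_\nu$ and comparing with $P_\lambda P_\mu = \sum_\nu o_{\lambda,\mu}^\nu P_\nu$ yields the classical identity $l_{\lambda,\mu}^\nu = 2^L\, o_{\lambda,\mu}^\nu$; combined with the nonnegativity of $l$ and $o$, this forces the required divisibility whenever $\Delta = 0$.

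For $\Delta > 0$, the idea is to extract the factor $2^{\Delta - L}$ via an involution argument. The heuristic is that each ``excess'' edge label---beyond the $L$ diagonal slots of $\lambda$ and $\mu$ available on the diagonal of $\nu$---carries a $\mathbb{Z}/2$ choice reminiscent of the priming/unpriming duality between Schur $P$ and $Q$ functions of Section~\ref{sec:PQstuff}. Concretely I would construct $(\Delta - L)$ commuting, fixed-point-free involutions on $\{T \in {\sf eqShSYT}(\nu/\lambda,\, |\mu|) : {\sf shEqRect}(T) = S_\mu\}$: for each edge label consumed by a firing of rule (J4$'$) during rectification, the associated involution would swap it with an auxiliary edge or box entry at a distinguished stage of the slide sequence, preserving the rectification target $S_\mu$.

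The main obstacle is precisely this verification: rules (J3$'$) and (J4$'$) interleave box and edge data in subtle ways, and a local modification of $T$ can cascade unpredictably through later slides in ${\sf shEqRect}$. An alternative (though heavier) route is to prove the paper's main conjecture---the ring isomorphism between $R_n$ and the Anderson--Fulton ring---from which Conjecture~\ref{conj:integral} would follow immediately, since the AF ring has structure constants in $\mathbb{Z}[z]$ by its origin in the equivariant Schubert calculus of the Lagrangian Grassmannian. This strategy trades one conjecture for a strictly stronger one, and either path appears to require genuinely new structural insight into shifted edge labeled jeu de taquin.
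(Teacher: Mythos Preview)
Your proposal correctly identifies that this statement is not actually proved in the paper: it is stated as an open \emph{conjecture}. The paper's only argument toward it is the conditional one you describe in your final paragraph, namely Proposition~10.2, which shows that Conjecture~\ref{conj:C=D} (the isomorphism $R_n \cong \FAring$) together with \eqref{eqn:FultonAndersonconnect} and Graham positivity would imply $D_{\lambda,\mu}^{\nu}\in\mathbb{Z}[z]$. So your ``alternative (though heavier) route'' is precisely the paper's route, and you are right that it trades one conjecture for a strictly stronger one.

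Your treatment of the case $\Delta=0$ is correct and is more than the paper states explicitly. With no edge labels, $d_{\lambda,\mu}^{\nu}=o_{\lambda,\mu}^{\nu}$, and the relation $l_{\lambda,\mu}^{\nu}=2^{L}\,o_{\lambda,\mu}^{\nu}$ (obtained by substituting $Q_\lambda=2^{\ell(\lambda)}P_\lambda$ into the $Q$-expansion) gives $D_{\lambda,\mu}^{\nu}=2^{L}o_{\lambda,\mu}^{\nu}=l_{\lambda,\mu}^{\nu}\in\mathbb{Z}$, since the $l$'s are integral structure constants of the $Q$-Schur functions. So you have genuinely settled the degree-zero case.

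For $\Delta>0$, however, your involution sketch is not a proof, and you say so yourself. The specific difficulty you flag---that a local modification of edge labels can propagate unpredictably through the (J3$'$)/(J4$'$) slides of ${\sf shEqRect}$---is exactly the obstruction, and the paper offers no mechanism to overcome it either (indeed, the example following Conjecture~\ref{conj:integral} shows that the na\"ive ``orbits of size $2^{\Delta-L}$ by permuting edge labels'' idea fails). In short: your proposal is an honest outline of where the problem stands, your base case is a small but genuine advance over the paper, and your main case remains as open as it is in the paper.
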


\begin{example}
Suppose that $\lambda=(2,1), \mu=(3,1), \nu=(3,1)$. Then $\Delta(\nu;\lambda,\mu)=3+4-4=3$, 
$L(\nu;\lambda,\mu)=2+2-2=2$. Also, $d_{\lambda,\mu}^{\nu} = 2$ because the following are the only $2$ shifted edge labeled tableaux which rectify to $S_{\mu}$.

\[\begin{picture}(300,50)
\put(-50,35){$\tableauL{{\ }&{\ }&{3}\\ & {\bullet}}$}
\put(-42,31){$1$}
\put(-27,12){$24$}
\put(20,30){$\rightarrow$}
\put(50,35){$\tableauL{{\ }&{\bullet}&{3}\\ & {2 }}$}
\put(58,31){$1$}
\put(75,12){$4$}
\put(120,30){$\rightarrow$}
\put(150,35){$\tableauL{{\bullet }&{2}&{3}\\ & {4}}$}
\put(158,31){$1$}
\put(220,30){$\rightarrow$}
\put(250,35){$\tableauL{{1}&{2}&{3}\\ & {4}}$}

\end{picture}\]
\[\begin{picture}(300,50)
\put(-50,35){$\tableauL{{\ }&{\ }&{3}\\ & {\bullet}}$}
\put(-30,12){$124$}
\put(20,30){$\rightarrow$}
\put(50,35){$\tableauL{{\ }&{\bullet}&{3}\\ & {1}}$}
\put(74,12){$24$}
\put(120,30){$\rightarrow$}
\put(150,35){$\tableauL{{\bullet }&{1}&{3}\\ & {2}}$}
\put(175,12){$4$}
\put(220,30){$\rightarrow$}
\put(250,35){$\tableauL{{1}&{2}&{3}\\ & {4}}$}
\end{picture}\]
Thus $D_{\lambda,\mu}^{\nu}=2^{2-3}\times z^{3}\times 2 = z^3$. \qed
\end{example}

In the previous example, $2^{L(\nu;\lambda,\mu)-\Delta(\nu;\lambda,\mu)}=2^{-1}$. Further, in the tableaux counting $d_{\lambda,\mu}^{\nu}$, only the edge labels differed. In the case that $L(\nu;\lambda,\mu)-\Delta(\nu;\lambda,\mu)=-k<0$ one might wonder if the tableaux counting $d_{\lambda,\mu}^{\nu}$, namely
\[F(\lambda,\mu;\nu) = \{T\in {\sf shEqSYT}(\nu/\lambda,|\mu|): {\sf shEqRect}(T)=S_{\mu}\}\] 
can be sorted into equivalence classes of size $2^k$ by ignoring edge labels. The following example shows this is false in general:

\begin{example} Suppose that $\lambda=(3), \mu=(3,2,1), \nu=(4,2,1)$. Then $\Delta(\nu;\lambda,\mu)=3+6-7=2$ and 
$L(\nu;\lambda,\mu)=1+3-3=1$, so $k=1$. Below a $T\in F_{(3),(3,2,1);(4,2,1)}$. Any $
T'\in {\sf eqShSYT}((4,2,1)/(3),6)$ formed by moving the edge labels of $T$ is not in $F_{(3),(3,2,1);(4,2,1)}$.
        \[\begin{picture}(100,60)
\put(0,45){$T=\tableauL{{\ }&{\ }&{\ }&{3}\\ & {1}& {2}\\ & & {4}}$}
\put(67,2){$56$}
\end{picture}\]
\end{example}

While Conjecture~\ref{conj:integral} is a purely combinatorial question, it
would also follow from a conjectural connection to equivariant 
Schubert calculus, through work of D.~Anderson-W.~Fulton presented in Section~\ref{sec:AndersonFultonstuff}.

The next result gives a further consistency check of our combinatorics. It was suggested by
 H.~Thomas (private communication):

\begin{theorem}
\label{theorem:commutative}
$R_n$ is commutative, \emph{i.e.}, $D_{\lambda,\mu}^{\nu}=D_{\mu,\lambda}^{\nu}$.
\end{theorem}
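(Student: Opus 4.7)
Since $L(\nu;\lambda,\mu) = \ell(\lambda)+\ell(\mu)-\ell(\nu)$ and $\Delta(\nu;\lambda,\mu) = |\lambda|+|\mu|-|\nu|$ are both manifestly symmetric in $\lambda$ and $\mu$, the prefactor $2^{L-\Delta}z^{\Delta}$ in the definition of $D_{\lambda,\mu}^{\nu}$ is symmetric, so the theorem reduces to the combinatorial identity $d_{\lambda,\mu}^{\nu} = d_{\mu,\lambda}^{\nu}$. My plan is to establish this by constructing an explicit bijection
\[
\Phi\colon F(\lambda,\mu;\nu) \longrightarrow F(\mu,\lambda;\nu),
\]
where $F(\lambda,\mu;\nu) := \{T \in {\sf eqShSYT}(\nu/\lambda,|\mu|) : {\sf shEqRect}(T) = S_\mu\}$.

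The approach is to adapt the \emph{tableau switching} of Benkart-Sottile-Stroomer, together with its shifted-shape refinements due to Shimozono and Haiman, to the edge-labeled setting. First, given $T \in F(\lambda,\mu;\nu)$, build a composite $W \in {\sf eqShSYT}(\nu, |\lambda|+|\mu|)$ on the straight shape $\nu$ by placing $S_\lambda$ on the sub-shape $\lambda$ (which carries no edge labels) and superposing $T$, with each label shifted up by $|\lambda|$, on $\nu/\lambda$ (retaining its edge labels on the diagonal edges of $\nu$). Condition (S4) at the interface is automatic, because every ``small'' label $\leq |\lambda|$ sits northwest of every ``large'' label $> |\lambda|$. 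Next, apply a switching procedure composed of atomic local moves --- modeled on (J1)--(J4$'$) --- that interchanges adjacent small/large labels; at termination, the small labels in the output $W'$ occupy some shifted straight shape $\mu' \subseteq \nu$ and the large labels fill $\nu/\mu'$, with edge labels still only on diagonal edges of $\nu$.

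The crucial property is that this switching preserves the ${\sf shEqRect}$ of each color separately: the ${\sf shEqRect}$ of the large-label sub-tableau in $W'$ equals ${\sf shEqRect}(T) = S_\mu$, which forces $\mu' = \mu$ and the small labels of $W'$ to be exactly $S_\mu$; simultaneously, the ${\sf shEqRect}$ of the small-label sub-tableau (originally $S_\lambda$) is transferred to the large side, so the large labels of $W'$, relabeled to $\{1,\ldots,|\lambda|\}$, form a tableau $\tilde T \in F(\mu,\lambda;\nu)$. Setting $\Phi(T) := \tilde T$ and noting that each atomic swap is its own inverse supplies a two-sided inverse of $\Phi$, completing the bijection.

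The main obstacle is defining the atomic swap rigorously in the presence of edge labels. One must verify that a swap never migrates an edge label off a diagonal edge of $\nu$, and that the local rules interact coherently with the row rectification order underlying the definition of ${\sf shEqRect}$. I expect this reduces to a case analysis parallel to (J1)--(J4$'$): the standard Benkart-Sottile-Stroomer swap handles small/large pairs lying in ordinary boxes, while configurations involving a diagonal edge require a new swap rule that mimics the absorption/emission behavior of (J4$'$). Establishing confluence of the atomic swaps and the rectification-invariance of the composite switching is the principal technical step; once this is in hand, the bijection yields $d_{\lambda,\mu}^\nu = d_{\mu,\lambda}^\nu$, hence commutativity of $R_n$.
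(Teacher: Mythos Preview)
Your reduction to $d_{\lambda,\mu}^\nu = d_{\mu,\lambda}^\nu$ is correct, and the instinct to seek a bijection $F(\lambda,\mu;\nu)\to F(\mu,\lambda;\nu)$ is sound.  But the switching approach has a real obstacle that you underestimate.  Classical tableau switching (Benkart--Sottile--Stroomer, Haiman) rests on the confluence of \emph{jeu de taquin}: rectification is independent of slide order, so one may swap in any order and the two colors' rectifications are preserved.  The paper shows explicitly that shifted edge-labeled \emph{jeu de taquin} is \emph{not} confluent: an example immediately following the definition of ${\sf shEqRect}$ exhibits a tableau whose row and column rectifications differ, and a later example shows that the commutativity statement itself \emph{fails} if one uses column rectification in place of row rectification.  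So your ``principal technical step'' of establishing confluence of the atomic swaps is not a routine case check---the naive statement is false, and any correct switching procedure must have the row-rectification order hard-wired into it.  Your sketch does not indicate how to do this.

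The paper's proof circumvents the issue via Fomin-style growth diagrams.  One encodes $T\in F(\lambda,\mu;\nu)$ as the top row of a rectangular grid of ``e-partitions'' (partitions decorated with diagonal-edge multiplicities); successive rows of the grid record the row-rectification process, so the left column reads $S_\lambda$ and the bottom row reads $S_\mu$.  The local rule (G2) governing each $2\times 2$ square is then checked to be \emph{symmetric under antidiagonal reflection}.  Reflecting the entire grid therefore yields another valid growth diagram with $S_\mu$ down the left and $S_\lambda$ along the bottom; its top row is the desired element of $F(\mu,\lambda;\nu)$.  No confluence is ever invoked---the symmetry is built directly into the local rule, and the choice of row rectification is what makes the left column equal $S_\lambda$ in the first place.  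Your switching bijection is morally the same map, but the growth-diagram packaging is exactly what makes it go through in this non-confluent setting.
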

\begin{proof}
The argument is based on a variation of S.~Fomin's \emph{growth diagram} formulation of \emph{jeu de taquin}; see, \emph{e.g.}, 
\cite[Appendix~1]{ECII}.

Given a tableau $T\in {\sf eqShSYT}(\lambda/\theta,n)$, define the corresponding \emph{e-partition} (e for ``edge'') to be ${\sf epart}(T):=(\lambda_{1}^{i_1},\lambda_{2}^{i_2}, \ldots )$ where $i_k = $number of edge labels on the $k^{th}$ diagonal edge. 
Each such $T$ can be encoded as a sequence of e-partitions starting with the (usual) partition $\theta = (\theta_{1}^{0}, \theta_{2}^{0}, \ldots)$: If $1$ appears in a box in row $i$ then the next e-partition has an extra box in this position, \emph{i.e.}, we replace $\theta_{i}^{0}$ with ${(\theta_i+1)}^{0}$. Otherwise $1$ appears on the edge of a diagonal box in row $i$, in which case, the one-larger e-partition has $\theta_{i}^{0}$ replaced by $\theta_i^{1}$. Repeat this process by looking at the position of $2$ in $T$ \emph{etc.} 
Evidently, such an encoding of $T$ is unique.

\begin{example}
\[\begin{picture}(300,50)
\put(-70,35){$\tableauL{{\ }&{\ }&{3}\\ & {\ }}$}
\put(-62,31){$1$}
\put(-47,12){$24$}
\put(-58,-5){$(3^1,1^2)$}

\put(-5,30){$\leftrightarrow$}
\put(15,35){$\tableauL{{\ }&{\ }\\ & {\ }}$}
\put(15,-5){$(2^0,1^0)$}

\put(65,30){$\rightarrow$}
\put(85,35){$\tableauL{{\  }&{\ }\\ & {\ }}$}
\put(93,31){$1$}
\put(88,-5){$(2^1,1^0)$}

\put(135,30){$\rightarrow$}
\put(155,35){$\tableauL{{\ }&{\ }\\ & {\ }}$}
\put(163,31){$1$}
\put(181,12){$2$}
\put(158,-5){$(2^1,1^1)$}

\put(205,30){$\rightarrow$}
\put(225,35){$\tableauL{{\ }&{\ }&{3}\\ & {\ }}$}
\put(233,31){$1$}
\put(251,12){$2$}
\put(235,-5){$(3^1,1^1)$}

\put(290,30){$\rightarrow$}
\put(310,35){$\tableauL{{\ }&{\ }&{3}\\ & {\ }}$}
\put(318,31){$1$}
\put(333,12){$24$}
\put(320,-5){$(3^1,1^1)$}

\end{picture}\]
\end{example}

Rectifying the left tableau above:
\[\begin{picture}(300,60)
\put(-50,35){$\tableauL{{\ }&{\ }&{3}\\ & {\bullet}}$}
\put(-42,31){$1$}
\put(-27,12){$24$}
\put(20,30){$\rightarrow$}
\put(50,35){$\tableauL{{\ }&{\bullet}&{3}\\ & {2 }}$}
\put(58,31){$1$}
\put(75,12){$4$}
\put(120,30){$\rightarrow$}
\put(150,35){$\tableauL{{\bullet }&{2}&{3}\\ & {4}}$}
\put(158,31){$1$}
\put(220,30){$\rightarrow$}
\put(250,35){$\tableauL{{1}&{2}&{3}\\ & {4}}$}
\end{picture} \]
Each of these four tableaux also has an associated sequence of e-partitions. Place these atop of one another as below.
The result is a \emph{tableau rectification diagram}:
\bgroup
\def\arraystretch{1.5}
\begin{center}
\begin{tabular}{ |c|c|c|c|c| } 
 \hline
 $(2^0,1^0)$ & $(2^1,1^0)$ & $(2^1,1^1)$ & $(3^1,1^1)$ & $(3^1,1^2)$ \\ 
 \hline
 $(2^0)$ & $(2^1)$ & $(2^1,1^0)$ & $(3^1,1^0)$ & $(3^1,1^1)$ \\ 
 \hline 
 $(1^0)$ & $(1^1)$ & $(2^1)$ & $(3^1)$ & $(3^1,1^0)$ \\ 
 \hline
 $\emptyset$ & $(1^0)$ & $(2^0)$ & $(3^0)$ & $(3^0,1^0)$ \\ 
 \hline
\end{tabular}
\end{center}
\egroup

Given two e-partitions $\lambda=(\lambda_{1}^{i_1}, \lambda_{2}^{i_2}, \ldots)$ and $\mu = (\mu_{1}^{j_1}, \mu_{2}^{j_2}, \ldots)$, we will say $\mu$ \emph{covers} $\lambda$ if: 
\begin{itemize}
\item[(i)] there exists unique $m$ such that $\lambda_m + 1= \mu_m$ and $\lambda_k= \mu_k$ for $k\neq m$, and $i_k=j_k$ for all $k$; or

\item[(ii)] $\lambda_k=\mu_k$ for all $k$ and there exists a unique $m$ such that $i_m+1=j_m$ and $i_k= j_k$ for $k\neq m$.
\end{itemize}

In the case that $\mu$ covers $\lambda$ we define $\mu/\lambda$ to be the extra box added in row $m$ (if in cases (i) above), or
the $m^{th}$ diagonal edge (in case (ii)). 
If $x$ is a diagonal edge, \emph{define} ${\sf shEjdt}_x(T)=T$. 
For two e-partitions, $\lambda=(\lambda_{1}^{i_1}, \lambda_{2}^{i_2}, \ldots)$ and 
$\mu = (\mu_{1}^{j_1}, \mu_{2}^{j_2}, \ldots)$, 
let 
\[\lambda \vee \mu = 
({\sf max}{\{\lambda_1,\mu_1\}}^{i_1+j_1}, 
{\sf max} {\{\lambda_2,\mu_2\}}^{i_2+j_2}, \ldots).\]

Consider the following local conditions on any $2 \times 2$ subsquare $\tableauS{{\alpha}&{\beta}\\{\gamma}&{\delta}}$ on a grid of e-partitions:

\begin{itemize}
\item[(G1)] Each e-partition covers the e-partition immediately to its left or below.
\item[(G2)] $\delta = \gamma \vee {\sf epart}({\sf shEjdt}_{\alpha/\gamma}(T))$, where $T$ is the filling of $\beta/\alpha$ by $1$. Similarly $\alpha = \gamma \vee {\sf epart}({\sf shEjdt}_{\delta/\gamma}(T))$ where $T$ is the filling of $\beta/\delta$ by $1$.	
\end{itemize}
 
Call any rectangular table of e-partitions satisfying (G1) and (G2) a \emph{growth diagram}. By the symmetry in the
definition of  (G1) and (G2), if $\mathcal{G}$ is a growth diagram, then so is $\mathcal{G}$ reflected about its antidiagonal. The following is
straightforward from the definitions:

\begin{claim}
\label{claim:growth1}
If $\tableauS{{\alpha}&{\beta}\\{\gamma}&{\delta}}$ is a $2 \times 2$ square in the tableau rectification diagram, then (G1) and (G2) hold. 
\end{claim}
\begin{proof}
Fix any two rows of the tableau rectification diagram; call this $2\times (n+1)$ subdiagram ${\mathcal R}$. The higher of the two row corresponds to some shifted edge labeled tableau $U$ and
the other row corresponds to ${\sf shEjdt}_{\sf c}(U)$ where ${\sf c}$ is a box (determined by the shapes in the leftmost column). Now $U$ is filled by $1,2,\ldots,n$. Notice that if we consider the submatrix ${\mathcal R}'$ of ${\mathcal R}$ consisting of the leftmost $k+1$ columns, then ${\mathcal R}'$ corresponds to the
computation of ${\sf shEjdt}_{\sf c}(U')$ where $U'$ is $U$ with labels $k+1,k+2,\ldots,n$ removed. The upshot is that it suffices to prove the claim for the rightmost
$2\times 2$ square in ${\mathcal R}$, which we will label with shapes $\tableauS{{\alpha}&{\beta}\\{\gamma}&{\delta}}$. Let $U_{\beta}$ be the tableau
associated to the chain of e-partitions ending at $\beta$. Similarly define $U_{\alpha}$. As well we have
\begin{equation}
\label{eqn:Aug19abc}
U_{\gamma}={\sf shEjdt}_{\sf c}(U_{\alpha}) 
\end{equation}
and
\begin{equation}
\label{eqn:Aug19xyz}
U_{\delta}={\sf shEjdt}_{\sf c}(U_{\beta}).
\end{equation}
Insofar as (G1) is concerned, it is obvious that $\beta$ covers $\alpha$ and $\delta$ covers $\gamma$. That $\beta$ covers $\delta$ follows from
(\ref{eqn:Aug19xyz}). Similarly, $\alpha$ covers $\gamma$ because of (\ref{eqn:Aug19abc}).

Now we turn to the proof of the first sentence of (G2). 
Suppose that $S\in {\sf eqShSYT}(\lambda/\theta,n)$. Define $\bar{S}$ to be the tableau obtained by forgetting the entries $1,2,\ldots n-1$ in $S$ and replacing the $n$ by $1$. Also define $\tilde{S}$ to be the tableau obtained by forgetting the entry $n$ in $S$. Then, it is clear that
\begin{equation}\label{eqn:epart}
{\sf epart}(S)= {\sf epart}(\tilde{S})\vee {\sf epart}(\bar{S})
\end{equation}
By definition, $\tilde{U_{\delta}}=U_{\gamma}$. This combined with (\ref{eqn:epart}) applied to $S=U_{\delta}$ shows that to prove the claim it suffices to show that ${\sf shEjdt}_{\alpha/\gamma}(T)=\bar{U_{\delta}}$.

\noindent {\sf Case 1:} ($\alpha$ covers $\gamma$ by (i))
In the computation of ${\sf shEjdt}_{\sf c}(U_{\alpha})$, the
$\bullet$ (that starts at ${\sf c}$) arrives at the outer corner \emph{box} $\alpha/\gamma$.
By definition, $U_\beta$ contains $n$ at $\beta/\alpha$. Therefore, if $\alpha/\gamma$ is not adjacent to $\beta/\alpha$, clearly 
$U_{\delta}$ is $U_{\gamma}$ with $n$ adjoined at $\beta/\alpha$. Thus, ${\sf shEjdt}_{\alpha/\gamma}(T) = \bar{U_{\delta}}$
as desired.

Otherwise $\alpha/\gamma$ is adjacent to $\beta/\alpha$. Then by the definition of ${\sf shEjdt}$, the position of $1$ in ${\sf shEjdt}_{\alpha/\gamma}(T)$ is the same as the position of $n$ in $U_{\delta}$. Thus, ${\sf shEjdt}_{\alpha/\gamma}(T) = \bar{U_{\delta}}$.

\noindent {\sf Case 2:} ($\alpha$ covers $\gamma$ by (ii))
Then in the computation of ${\sf shEjdt}_{\sf c}(U_{\alpha})$, the
$\bullet$ must have arrived at a diagonal box, and $k(<n)$ is the smallest edge label of this same box, resulting in a (J4') slide. Now regardless of where $n$ is placed
in $U_{\beta}$, it is clear that $U_{\delta}$ is $U_{\gamma}$ with $n$ adjoined in the same place as $n$'s place in $U_\beta$, \emph{i.e.}, $\beta/\alpha$. In other words, ${\sf shEjdt}_{\alpha/\gamma}(T) = \bar{U_{\delta}}$.

Proof of the second sentence of (G2):
For a pair of e-partitions $\lambda$, $\mu$ with $\mu$ covering $\lambda$, define $U_{\mu/\lambda}$ to be the tableau with $1$ placed in the location $\mu/\lambda$ in $\mu$. Clearly, $\alpha={\sf epart}(U_{\alpha})= {\sf epart}(U_{\gamma}) \vee {\sf epart}(U_{\alpha/\gamma})$ Thus, it suffices to show 
\begin{equation}\label{eqn:growAlpha}
    {\sf shEjdt}_{\delta/\gamma}(T)=U_{\alpha/\gamma}.
\end{equation}

\noindent {\sf Case 1:}($\delta$ covers $\gamma$ by (i))
 Then in $U_{\delta}$, $n$ occupies box $\delta/\gamma$.
If $\delta/\gamma$ is not adjacent to $\beta/\delta$, it is clear that $U_{\beta}$ and $U_{\delta}$ have $n$ in the same place, \emph{i.e.}, $\delta/\gamma$. Thus $\beta/\delta$ and $\alpha/\gamma$ are the same box or edge position. 
Thus Equation (\ref{eqn:growAlpha}) follows. 

Otherwise $\delta/\gamma$ is adjacent to $\beta/\delta$. Then, 
\begin{equation}
\label{eqn:Aug22abc}
{\sf shEjdt}_{\delta/\gamma}(T) = U_{\delta/\gamma}.
\end{equation} 
By the definition of ${\sf shEjdt}$, it follows that in the computation of ${\sf shEjdt}_{\sf c}(U_{\alpha})$, the $\bullet$ arrived at an outer corner $\alpha/\gamma$. This combined with the fact that $\delta/\gamma$ is adjacent to $\beta/\delta$, we conclude that $\alpha/\gamma$ is a box that is in the same
position as the box 
$\delta/\gamma$. Now (\ref{eqn:Aug22abc}) is precisely (\ref{eqn:growAlpha}).

\noindent {\sf Case 2:} ($\delta$ covers $\gamma$ by (ii))
Then $\delta/\gamma$ is an edge, so $n$ occupies an edge in $U_{\delta}$. Thus in ${\sf shEjdt}_{\sf c}(U_{\beta})$, $n$ is never moved.
So, $\beta/\delta=\alpha/\gamma$. Since $\delta/\gamma$ is a diagonal edge, ${\sf shEjdt}_{\delta/\gamma}(T)= T:=  U_{\beta/\delta}= U_{\alpha/\gamma}$.
\end{proof}

Let ${\sf Growth}(\lambda,\mu;\nu)$ be the set of growth diagrams such that:
\begin{itemize}
\item the leftmost column encodes the superstandard tableau of shape $\lambda$;
\item the bottom-most row encodes the superstandard tableau of shape $\mu$;
\item the shape of the e-partition in the top right corner is $\nu$.
\end{itemize}

\begin{claim}
\label{claim:growth2}
$\#{\sf Growth}(\lambda,\mu;\nu)=\#F(\lambda,\mu;\nu)$
\end{claim}
\begin{proof}
Given $T\in F(\lambda,\mu;\nu)$, form the tableau rectification diagram ${\mathcal G}(T)$. Notice that since we are using row rectification order, the
left side of the diagram will be the sequence for $S_{\lambda}$. Since $T$ is assumed to rectify to $S_{\mu}$, the bottom row of the
diagram will be the sequence  for $S_{\mu}$. Hence by Claim~\ref{claim:growth1}, ${\mathcal G}(T)\in{\sf Growth}(\lambda,\mu;\nu)$, and thus
$T\mapsto {\mathcal G}(T)$ is an injection implying $\#F(\lambda,\mu;\nu)\leq \#{\sf Growth}(\lambda,\mu;\nu)$. 

For the reverse inequality,
given any $\mathcal{G} \in {\sf Growth}(\lambda,\mu;\nu)$, by (G1), the top row defines $T({\mathcal G})\in {\sf eqShSYT}(\nu/\lambda,|\mu|)$. Then $T(\mathcal G)$ has a 
tableau rectification diagram ${\mathcal G}'$. By Claim~\ref{claim:growth1}, ${\mathcal G}'$ is uniquely determined by its left and top borders together with (G2). Thus, since ${\mathcal G}$ and ${\mathcal G}'$ share the same left and top borders and both satisfy (G2),
${\mathcal G}={\mathcal G}'$. In particular, $T({\mathcal G})\in F(\lambda,\mu;\nu)$. Thus, ${\mathcal G}\mapsto T(\mathcal G)$ is an 
injection proving $\#{\sf Growth}(\lambda,\mu;\nu)\leq \#F(\lambda,\mu;\nu)$.
\end{proof}

To conclude, we must show that $d_{\lambda,\mu}^{\nu} = d_{\mu,\lambda}^{\nu}$. Since 
\[d_{\lambda,\mu}^{\nu}=\#F(\lambda,\mu;\nu)=\#{\sf Growth} (\lambda,\mu ;\nu),\] 
it suffices to show that $\#{\sf Growth}(\lambda,\mu;\nu) = \#{\sf Growth}(\mu,\lambda;\nu)$. Reflecting along the antidiagonal defines a bijection between ${\sf Growth}(\lambda,\mu;\nu)$ and ${\sf Growth}(\mu,\lambda;\nu)$.
\end{proof}

\begin{example}
Under column rectification, Theorem~\ref{theorem:commutative} is false. 
Suppose $\lambda=(4,3)$, $\mu=(3,2,1)$ and $\nu=(4,3,2,1)$. The number of tableaux of shape $\nu/\lambda$ that column rectify to $S_{\mu}$ is $20$ while the number of those with shape $\nu/\mu$ column rectifying to $S_{\lambda}$ is $16$.\qed
\end{example}

\begin{conjecture}\label{conj:associative}
$(R_n,\star)$ is an associative ring.
\end{conjecture}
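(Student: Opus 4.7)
The plan is to reduce associativity of $R_n$ to a combinatorial identity on the counts $d_{\lambda,\mu}^\nu$ and then to prove this identity by extending the growth diagram bijection of Theorem~\ref{theorem:commutative}. The prefactor book-keeping telescopes cleanly: both $L(\sigma;\lambda,\mu)+L(\rho;\sigma,\nu)=\ell(\lambda)+\ell(\mu)+\ell(\nu)-\ell(\rho)$ and $\Delta(\sigma;\lambda,\mu)+\Delta(\rho;\sigma,\nu)=|\lambda|+|\mu|+|\nu|-|\rho|$ are independent of the intermediate partition $\sigma$, so the common $2$- and $z$-power prefactor on the two sides of the associativity relation cancels, and the problem reduces to
\[
\sum_\sigma d_{\lambda,\mu}^\sigma\, d_{\sigma,\nu}^\rho \;=\; \sum_\sigma d_{\mu,\nu}^\sigma\, d_{\lambda,\sigma}^\rho
\]
for all $\lambda,\mu,\nu,\rho\subseteq\rho_n$.

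Next I would encode each pair of tableaux counted on the left as a single composite object. A pair $(T_1,T_2)$ with $T_1\in{\sf eqShSYT}(\sigma/\lambda,|\mu|)$ and ${\sf shEqRect}(T_1)=S_\mu$, and $T_2\in{\sf eqShSYT}(\rho/\sigma,|\nu|)$ and ${\sf shEqRect}(T_2)=S_\nu$, should correspond to a two-stage growth diagram: the first stage encodes the rectification of $T_1$ (left column $S_\lambda$, bottom row $S_\mu$) and the second stage, stacked above, encodes the rectification of $T_2$, reading off $\sigma$ from the top border of the first stage. The antidiagonal reflection that furnished commutativity should interchange the two orders of rectification and produce the right-hand sum. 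For this to make sense the composite diagram must be well-defined independently of the order, and this reduces to a target-shape invariance statement extending Theorem~\ref{theorem:commutative}: for every $U\in{\sf shSYT}(\mu)$, the count of $T\in{\sf eqShSYT}(\nu/\lambda,|\mu|)$ that row-rectify to $U$ equals $d_{\lambda,\mu}^\nu$. The growth diagram setup of Theorem~\ref{theorem:commutative} already permits an arbitrary chain in place of the $S_\mu$ chain along the bottom row; the content of the invariance is that the total count does not depend on which chain ending at $\mu$ is placed there.

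The principal obstacle will be establishing target-shape invariance. The classical analog, Theorem~\ref{thm:secondjdt}, rests on Haiman's dual equivalence, under which swapping non-adjacent entries in the target tableau corresponds to a local transformation on the source. The slides (J3$'$)--(J4$'$) for shifted edge labeled tableaux are not local in this sense: they can move an edge label across an entire row to a different diagonal box, so standard dual equivalence does not directly transfer. Two routes seem plausible: either identify new local ``knight moves'' on bottom-row chains that are compatible with the growth rules (G1)--(G2), a potentially lengthy case analysis; or bypass target-shape invariance altogether by first establishing a Pieri-type identity for $R_n$ analogous to (\ref{eqn:Brec}) and then applying an associativity recurrence in the spirit of Section~\ref{subsection:GKM} to pin down the $d_{\lambda,\mu}^\nu$ uniquely (which would, however, require an independent verification of the Pieri rule, likely via a geometric interpretation of $R_n$). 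Both routes appear technically substantial, and we expect this to be where the bulk of the work will lie.
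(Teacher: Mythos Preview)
The statement in question is labeled a \emph{conjecture} in the paper, and the paper does not prove it. The only evidence offered is the conjectural isomorphism with the Anderson--Fulton ring $\FAring$ (Section~\ref{sec:AndersonFultonstuff}), which is itself associative; but that isomorphism is Conjecture~\ref{conj:C=D}, also unproved. So there is no ``paper's own proof'' to compare your proposal against.

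Your proposal is likewise not a proof: it is a strategy outline that correctly isolates the difficulty and then stops. The prefactor telescoping is right, and the reduction of associativity of $\star$ to the combinatorial identity
\[
\sum_\sigma d_{\lambda,\mu}^{\sigma}\, d_{\sigma,\nu}^{\rho}
\;=\;
\sum_\sigma d_{\mu,\nu}^{\sigma}\, d_{\lambda,\sigma}^{\rho}
\]
is valid. The growth-diagram packaging is also the natural thing to try given the proof of Theorem~\ref{theorem:commutative}. But as you yourself recognize, the crux is an edge-labeled shifted analogue of the second fundamental theorem (target-shape invariance), and you do not supply one. The paper's Example~9.2 shows that ${\sf shEqRect}$ already fails the \emph{first} fundamental theorem (confluence under arbitrary rectification orders), which is why row rectification had to be fixed by fiat; this is a strong warning that the usual dual-equivalence machinery will not transfer without substantial new ideas. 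Your two suggested routes (new local moves compatible with (G1)--(G2), or a Pieri-plus-recurrence argument) are reasonable directions, but neither is carried out, and the paper offers no further hints. In short: your reduction is sound, your diagnosis of the obstacle is accurate, and the obstacle remains exactly where the paper leaves it --- open.
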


Additional support for Conjecture~\ref{conj:associative} comes from a conjectural connection to a commutative, associative ring studied by
D.~Anderson-W.~Fulton, as described in the next section.

\section{Conjectural connection to work of W.~Fulton-D.~Anderson and equivariant Schubert calculus}\label{sec:AndersonFultonstuff}

\subsection{Results of W.~Fulton-D.~Anderson}
For a strict shape
$\lambda\subseteq \rho_n$, let $\sigma_{\lambda}={\sf Pf}(c_{\lambda_i,\lambda_j})$ where 
\[c_{p,q}=\displaystyle\sum_{0\leq a\leq b \leq q}(-1)^b\left(\binom{b}{a}+\binom{b-1}{a}\right)z^a c_{p+b-a} c_{q-b}.\] 
If $\ell = \ell(\lambda)$ is odd, define $\lambda_{\ell+1}=0$ so that the matrix becomes even ordered.

Recently, D.~Anderson-W.~Fulton have studied a ${\mathbb Z}[z]$-algebra 
\[{\FAring}=\mathbb{Z}[z,c_1,c_2,\ldots]/(c_{p,p}=0, \forall p>0)\]
 and shown it has a basis over ${\mathbb Z}[z]$ of $\sigma_{\lambda}$. Define structure constants by
\[\sigma_{\lambda}\cdot \sigma_{\mu} =\sum_{\nu\subseteq \rho_n} \FA_{\lambda,\mu}^{\nu}\sigma_{\nu}.\]
Also let
\[{\mathfrak d}_{\lambda,\mu}^{\nu}:=\frac{\FA_{\lambda,\mu}^{\lambda}}{2^{L(\lambda;\lambda,\mu)-\Delta(\lambda;\lambda,\mu)}z^{\Delta(\lambda;\lambda,\mu)}}.\]
\begin{conjecture}
\label{conj:C=D}
There is a ring isomorphism $\phi:R_n\to \FAring$ that sends $[\lambda]\mapsto \sigma_{\lambda}$. Therefore,
$\FA_{\lambda,\mu}^{\nu}=D_{\lambda,\mu}^{\nu}$ (equivalently ${\mathfrak d}_{\lambda,\mu}^{\nu}=d_{\lambda,\mu}^{\nu}$).
\end{conjecture}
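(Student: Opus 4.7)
The plan is to mimic the ``associativity argument'' sketched in Section~\ref{sec:Young} and successfully deployed in the proof of Theorem~\ref{thm:typeBCpower2}. First I would derive a Pieri-type recurrence internal to $\FAring$: using the defining Pfaffian formula for $\sigma_\lambda$ together with the relation $c_{p,p}=0$, compute $\sigma_{(1)}\cdot \sigma_\lambda$ explicitly to obtain a formula of the shape
\begin{equation}
\label{eqn:AFPieri}
\sigma_{(1)}\cdot \sigma_\lambda = W(\lambda)\,\sigma_\lambda + \sum_{\lambda^+} 2^{\ell(\lambda)+1-\ell(\lambda^+)}\sigma_{\lambda^+},
\end{equation}
for some explicit weight $W(\lambda)\in \mathbb{Z}[z]$, where the sum runs over strict shifted shapes $\lambda^+\supseteq \lambda$ with $|\lambda^+|=|\lambda|+1$. (The anticipated value is $W(\lambda)=z\cdot\ell(\lambda)$, on account of the diagonal edges contributing the equivariant weight $z$ per row; this is consistent with the tables in Section~\ref{sec:shiftededge} and with the doubling factor (\ref{eqn:PQAug11})--(\ref{eqn:Crec}) for the Lagrangian side.) Associativity in $\FAring$ then yields a recurrence analogous to (\ref{eqn:Brec}) that, together with the base case $\sigma_\mu|_\lambda = \FA_{\lambda,\mu}^\lambda$ (a version of Arabia's formula (\ref{eqn:Arabia})), uniquely determines all $\FA_{\lambda,\mu}^\nu$ from the $\FA_{\lambda,\mu}^\lambda$'s.

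Next I would show that the combinatorially defined constants $D_{\lambda,\mu}^\nu$ satisfy the same recurrence. The Pieri computation $[\lambda]\star[(1)]$ in $R_n$ is a direct hand computation: the only tableaux of shape $\nu/\lambda$ with a single box (or single diagonal edge) that rectify to $S_{(1)}=\tableauS{1}$ are obvious, and counting them with the weight factor $2^{L-\Delta}z^\Delta$ should reproduce the right-hand side of (\ref{eqn:AFPieri}). Combined with the commutativity already provided by Theorem~\ref{theorem:commutative}, and assuming Conjecture~\ref{conj:associative}, the associativity of $\star$ applied to $[\lambda]\star[(1)]\star[\mu]$ yields the identical recurrence. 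The base case $D_{\lambda,\mu}^\lambda = \FA_{\lambda,\mu}^\lambda$ requires a separate verification: here the tableaux counted by $d_{\lambda,\mu}^\lambda$ of shape $\lambda/\lambda$ with entries filling only diagonal edges should evaluate to a restriction-formula expression for $\sigma_\mu|_\lambda$. Since $\FAring$ is conjecturally the equivariant cohomology of the Lagrangian Grassmannian (up to the $2\gamma_i/\alpha_i$ substitution), one should be able to identify this with an excited-Young-diagram style formula, comparing term-by-term with the rectification count.

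The principal obstacle is proving Conjecture~\ref{conj:associative}, the associativity of $(R_n,\star)$. The analogous step in the unshifted equivariant case \cite{Thomas.Yong} is the technical heart of that paper, and the shifted setting adds two new wrinkles: (i) the asymmetry between ordinary boxes and diagonal edges, reflected in the fact that rectification (unlike in the classical case) depends on the order chosen, so one must commit to row rectification and show this order is stable under the growth-diagram surgery; (ii) the doubling factor $2^{L-\Delta}$, which has no analogue in the unshifted theory. A natural route is to extend the growth-diagram machinery from the proof of Theorem~\ref{theorem:commutative}: associativity would follow from showing that for any tripartite growth diagram (indexed by three directions rather than two), the local rules (G1) and (G2) can be confluently propagated, so that the count in the $3$-direction factors through any intermediate pair. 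Establishing this confluence --- equivalently, a shifted edge-labeled analogue of the First Fundamental Theorem (Theorem~\ref{thm:firstjdt}) applicable even in the presence of diagonal edge labels and the row-rectification convention --- is the hard part.

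Finally, once associativity and the Pieri match are in hand, the map $\phi:[\lambda]\mapsto \sigma_\lambda$ is forced to be a well-defined $\mathbb{Z}[z]$-algebra homomorphism; it is a bijection on the distinguished bases, hence an isomorphism, and $D_{\lambda,\mu}^\nu=\FA_{\lambda,\mu}^\nu$ follows. As a side benefit, this would simultaneously prove Conjecture~\ref{conj:integral} (the integrality $D_{\lambda,\mu}^\nu\in \mathbb{Z}[z]$), since $\FAring$ is by construction a $\mathbb{Z}[z]$-algebra.
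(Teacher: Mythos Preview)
The statement you are attempting is a \emph{conjecture} in the paper, not a theorem: the paper offers no proof, only exhaustive verification for $n\leq 4$ and partial checks at $n=5$. So there is no ``paper's own proof'' to compare against. That said, the paper does sketch exactly the route you propose. Immediately after Proposition~\ref{conj:integral}'s corollary it states that Conjecture~\ref{conj:C=D} ``should follow from a proof that $D_{\lambda,(p)}^{\nu}=\FA_{\lambda,(p)}^{\nu}$, together with Conjecture~\ref{conj:associative}, by a variation of the `associativity argument' of Section~\ref{sec:Young}.'' Your plan is this plan, and you correctly flag associativity of $(R_n,\star)$ as the principal obstacle --- it is genuinely open.

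One small divergence worth noting: the paper's suggested reduction uses the \emph{full} Pieri match $D_{\lambda,(p)}^{\nu}=\FA_{\lambda,(p)}^{\nu}$ for all single-row $\mu=(p)$, which together with commutativity and associativity determines everything (since the $\sigma_{(p)}$ generate $\FAring$). You instead propose matching only the single-box Pieri rule $\mu=(1)$ and compensating with a separate base-case verification $D_{\lambda,\mu}^{\lambda}=\FA_{\lambda,\mu}^{\lambda}$, in the spirit of the proof of Theorem~\ref{thm:typeBCpower2}. Either route is viable in principle, but your base-case step is itself nontrivial: the paper's Section~10.2 establishes $d_{\lambda,\mu}^{\lambda}={\mathfrak d}_{\lambda,\mu}^{\lambda}$ only in the two special cases $\mu=(p)$ and $\lambda=\mu=\rho_n$, not in general. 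So your ``separate verification'' hides a second open problem beyond associativity. The paper's formulation (match Pieri for all $(p)$) avoids this at the cost of a harder Pieri computation.
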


We have exhaustively checked Conjecture~\ref{conj:C=D} for all $n\leq 4$ and many $n=5$ cases. 

D.~Anderson-W.~Fulton (private communication) connected the above ring to equivariant Schubert calculus of $Z$. That is,
\begin{equation}
\label{eqn:FultonAndersonconnect}
L_{\lambda,\mu}^{\nu}(\alpha_1\mapsto z, \alpha_2\mapsto 0,\ldots,\alpha_n\mapsto 0)={\mathfrak D}_{\lambda,\mu}^{\nu}.
\end{equation}

\begin{proposition}
Conjecture~\ref{conj:C=D}$\implies$ Conjecture~\ref{conj:integral}.
\end{proposition}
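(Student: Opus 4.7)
The plan is that this proposition is essentially formal, once one unpacks the definitions. The first observation is that the Anderson-Fulton ring $\FAring=\mathbb{Z}[z,c_1,c_2,\ldots]/(c_{p,p}=0,\forall p>0)$ is \emph{by construction} a $\mathbb{Z}[z]$-algebra, and (as recalled just before Conjecture~\ref{conj:C=D}) the set $\{\sigma_\lambda:\lambda\subseteq \rho_n\}$ is a $\mathbb{Z}[z]$-basis of $\FAring$. Hence, expanding the product $\sigma_\lambda\cdot \sigma_\mu$ uniquely in this basis yields structure constants $\FA_{\lambda,\mu}^\nu\in \mathbb{Z}[z]$ automatically, with no integrality to check.

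The second step is to invoke Conjecture~\ref{conj:C=D}, which asserts the existence of a $\mathbb{Z}[z]$-algebra isomorphism $\phi:R_n\to \FAring$ with $\phi([\lambda])=\sigma_\lambda$. Applying $\phi$ to the defining relation
\[
[\lambda]\star [\mu]=\sum_{\nu\subseteq \rho_n} D_{\lambda,\mu}^{\nu}\,[\nu]
\]
gives $\sigma_\lambda\cdot \sigma_\mu=\sum_\nu D_{\lambda,\mu}^\nu \sigma_\nu$. Comparing with the expansion $\sigma_\lambda\cdot \sigma_\mu=\sum_\nu \FA_{\lambda,\mu}^\nu\sigma_\nu$ and using the uniqueness of coefficients in the $\mathbb{Z}[z]$-basis $\{\sigma_\nu\}$, we conclude $D_{\lambda,\mu}^\nu=\FA_{\lambda,\mu}^\nu\in \mathbb{Z}[z]$, which is exactly Conjecture~\ref{conj:integral}.

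There is no genuine obstacle here: every ingredient is either a definition (that $\FAring$ is a $\mathbb{Z}[z]$-algebra on the basis $\{\sigma_\lambda\}$) or a direct consequence of Conjecture~\ref{conj:C=D}. The only point worth making explicit in the write-up is that $\phi$ is assumed to be a \emph{$\mathbb{Z}[z]$-algebra} map (not merely additive), so that it carries the product $\star$ to the product in $\FAring$ and preserves the $\mathbb{Z}[z]$-coefficients of basis expansions; this is implicit in calling $\phi$ a ring isomorphism together with the fact that $z\in R_n$ visibly maps to $z\in \FAring$.
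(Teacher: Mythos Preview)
Your proof is correct, but it takes a different route from the paper's. You argue purely algebraically: since $\FAring$ is a $\mathbb{Z}[z]$-algebra and the $\sigma_\lambda$ form a $\mathbb{Z}[z]$-basis (as stated just before Conjecture~\ref{conj:C=D}), the structure constants $\FA_{\lambda,\mu}^{\nu}$ lie in $\mathbb{Z}[z]$ automatically; then $D_{\lambda,\mu}^{\nu}=\FA_{\lambda,\mu}^{\nu}$ from Conjecture~\ref{conj:C=D} finishes. The paper instead establishes $\FA_{\lambda,\mu}^{\nu}\in\mathbb{Z}[z]$ via geometry: it invokes Graham positivity (Theorem~\ref{thm:Graham}) to get $L_{\lambda,\mu}^{\nu}\in\mathbb{Z}_{\geq 0}[\alpha_1,\ldots,\alpha_n]$, and then applies the Anderson--Fulton specialization~(\ref{eqn:FultonAndersonconnect}), $\alpha_1\mapsto z$, $\alpha_i\mapsto 0$ for $i>1$, to conclude $\FA_{\lambda,\mu}^{\nu}\in\mathbb{Z}_{\geq 0}[z]$. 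Your argument is more elementary and self-contained; the paper's argument uses heavier input but yields the stronger conclusion that the coefficients of $\FA_{\lambda,\mu}^{\nu}$ are nonnegative integers (though only integrality is needed for Conjecture~\ref{conj:integral}).
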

\begin{proof}
By Theorem~\ref{thm:Graham},
$L_{\lambda,\mu}^{\nu}$ is a nonnegative integer polynomial in 
$\alpha_1,\ldots,\alpha_n$, the simples of the type $C$ root system. Now apply (\ref{eqn:FultonAndersonconnect}).
\end{proof}

In turn, Conjecture~\ref{conj:C=D} should follow from a proof  that $D_{\lambda,(p)}^{\nu}=\FA_{\lambda,(p)}^{\nu}$,
together with Conjecture~\ref{conj:associative}, by a variation of the ``associativity argument'' of Section~\ref{sec:Young}.

\subsection{Two numerologically nice  cases of Conjecture~\ref{conj:C=D}} 

\begin{theorem}
$d_{\lambda,(p)}^{\lambda}=\binom{\ell(\lambda)}{p}2^{p-1}={\mathfrak d}_{\lambda,(p)}^{\lambda}$.
\end{theorem}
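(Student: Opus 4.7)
The plan is to prove the two equalities $d_{\lambda,(p)}^\lambda = \binom{\ell(\lambda)}{p}2^{p-1}$ and $\mathfrak{d}_{\lambda,(p)}^\lambda = \binom{\ell(\lambda)}{p}2^{p-1}$ independently.

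For the algebraic identity, I would use the Arabia-type formula $L_{\lambda,(p)}^\lambda = \xi_{(p)}(Z)|_\lambda$ (the Lagrangian analogue of (\ref{eqn:Arabia})) together with (\ref{eqn:FultonAndersonconnect}): $\FA_{\lambda,(p)}^\lambda$ equals this restriction under $\alpha_1\mapsto z$, $\alpha_i\mapsto 0$ for $i>1$, which is equivalent to $t_i \mapsto z/2$ for every $i$. Under this specialization, the weight $\beta_Z(x)$ of each main-diagonal box $x\in\lambda$ collapses to $z$, while every off-diagonal weight $2\alpha_i$ vanishes. Applying any known formula for $\xi_{(p)}(Z)|_\lambda$ (for instance the Ikeda-Naruse excited-diagram formula \cite{Ikeda.Naruse}), the surviving terms after specialization correspond to choosing $p$ of the $\ell(\lambda)$ main-diagonal boxes of $\lambda$, yielding $\binom{\ell(\lambda)}{p}z^p$. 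Since $\Delta(\lambda;\lambda,(p))=p$ and $L(\lambda;\lambda,(p))=1$, dividing by $2^{1-p}z^p$ gives the asserted $\binom{\ell(\lambda)}{p}2^{p-1}$. (This is consistent with the $n\leq 3$ tables: for instance $[p]\star[\rho_3]$ has $[\rho_3]$-coefficient $\binom{3}{p}z^p$.)

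For the combinatorial identity, observe that a tableau $T\in{\sf eqShSYT}(\lambda/\lambda,p)$ amounts to a placement of the labels $\{1,\ldots,p\}$ on the $\ell(\lambda)$ diagonal edges $e_1,\ldots,e_{\ell(\lambda)}$ of $\lambda$, since $\lambda/\lambda$ contains no boxes. Under row rectification, each of the $|\lambda|$ slides either transports $\bullet$ from its inner corner to an outer corner of the outer shape (reducing the outer shape by one box) or is absorbed via a (J4')-style rule that pulls an edge label up into its diagonal box. For ${\sf shEqRect}(T)=S_{(p)}=\tableauS{1 & 2 & \cdots & p}$, exactly $p$ slides must be absorptions and the $p$ resulting box labels must propagate up to row~$1$ in the correct left-to-right order.

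The main obstacle is enumerating the valid placements. I would induct on $\ell(\lambda)$, matching the recursion
\[
\binom{\ell(\lambda)}{p}2^{p-1} = \binom{\ell(\lambda)-1}{p}2^{p-1} + 2\binom{\ell(\lambda)-1}{p-1}2^{p-2}
\]
against a bipartition of valid placements according to the role of the topmost diagonal edge $e_1$; the factor of $2$ in the second summand reflects the (J3')/(J4') branching. Small cases confirm the formula — one finds $2,6,4$ valid placements for $(\ell(\lambda),p) = (2,2), (3,2), (3,3)$ respectively, and all four valid placements in the $(3,3)$ case share $j_p = \ell(\lambda)$, suggesting the key constraint that label $p$ must sit on the bottommost occupied edge. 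The subtle step is controlling the (J3')/(J4') branching (where the choice depends on whether $\min(S)\lessgtr a$) across the consecutive pairs of labels: one must verify that it produces exactly the $2^{p-1}$-fold freedom and no more, which requires a careful analysis of how edge labels propagate under the interplay of (J1), (J2), (J3'), and (J4') during rectification.
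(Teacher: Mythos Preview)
Your overall strategy matches the paper's: both equalities are treated independently, the algebraic one via the restriction formula $L_{\lambda,\mu}^{\lambda}=\xi_{\mu}(Z)|_{\lambda}$ combined with the Ikeda--Naruse excited Young diagram description and the specialization~(\ref{eqn:FultonAndersonconnect}), and the combinatorial one via an induction that strips off the first row of $\lambda$. For the algebraic side your sketch is essentially what the paper does, though the paper phrases the outcome as $\FA_{\lambda,\mu}^{\lambda}=\#\mathcal{E}_{\rho_{\ell(\lambda)}}(\mu)\cdot z^{|\mu|}$ (EYDs inside the staircase $\rho_{\ell(\lambda)}$, not merely ``choosing $p$ diagonal boxes of $\lambda$''); the count $\binom{\ell(\lambda)}{p}$ then comes from the observation that the $p$ boxes of $D_{(p)}$ occupy distinct columns throughout any excitation.

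Where your proposal remains a genuine plan rather than a proof is the inductive step for $d_{\lambda,(p)}^{\lambda}$. Your proposed bipartition ``according to the role of $e_1$'' with the factor of~$2$ attributed to (J3')/(J4') branching does not directly yield the recursion. The paper's induction (on $\ell(\lambda)+p$) instead partitions $F(\lambda,(p);\lambda)$ into \emph{three} cases according to whether the label~$1$ lies on the first diagonal edge, and if not, whether the tableau $\overline{T}$ obtained by deleting row~$1$ already lies in $F(\overline{\lambda},(p);\overline{\lambda})$. The second case (label~$1$ not in row~$1$ but $\overline{T}$ fails to rectify to $S_{(p)}$) is the delicate one: there one checks that $\overline{T}$ rectifies to the row tableau with box labels $1,3,4,\ldots,p-1$ and an edge label~$2$, and then bijects with $F(\overline{\lambda},(p-1);\overline{\lambda})$ by deleting the~$2$ and relabeling. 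Cases~(2) and~(3) each contribute $\binom{\ell(\lambda)-1}{p-1}2^{p-2}$, which is the source of your ``factor of~$2$'', but it is this trichotomy on the position of label~$1$ --- not a (J3')/(J4') dichotomy --- that makes the bijections clean.
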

\begin{proof}
We will use some results of T.~Ikeda-H.~Naruse \cite{Ikeda.Naruse} that we now recall.
For a strict partition $\lambda = (\lambda_1 > \ldots > \lambda_r>0)$,  let $D_{\lambda}$ denote the associated shifted shape. Explicitly, 
\[D_{\lambda} = \{(i,j)\in \mathbb{Z}^2| 1\leq i\leq r, i\leq j <\lambda_i +i \}.\] 
For instance, $D_{(3,1)}=\tableauS{{\ }&{\  }&{\ }\\ & {\ }}$. Given an arbitrary subset $C\subset D_{\lambda}$, if a box $x\in C$ satisfies either of the following conditions:
\begin{enumerate}
\item[(I)] $x=(i,i)$ and $(i,i+1), (i+1,i+1) \in D_{\lambda} \setminus C$
\item[(II)] $x=(i,j), j\neq i$ and $(i+1,i), (i,i+1), (i+1,i+1) \in D_{\lambda} \setminus C$
\end{enumerate}
then set 
$C'=C\cup \{ x+(1,1)\} \backslash \{x\}$. 
The procedure $C\rightarrow C'$ is called an \emph{elementary excitation occuring} at $x \in C$. Any subset $C'\subset D_{\lambda}$ obtained from $C$ by an application of successive elementary excitations is called 
an \emph{excited Young diagram} (EYD) of $C$. 
Denote by $\mathcal{E}_{\lambda}(\mu)$ the set of all EYDs of $D_{\mu}$ contained in $D_{\lambda}$.

\begin{example}\label{ex:Eyd1}
Suppose $\lambda=(4,2,1), \mu=(2)$, then $\mathcal{E}_{\lambda}(\mu)$ consists of the following EYDs
\begin{gather*}
\tableauS{{+}&{+}&{\ }&{\ }\\  & {\ }&{\ }\\  & & {\ }}
\qquad
\tableauS{{+ }&{\  }&{\ }&{\ }\\  & {\ }&{+}\\  & & {\ }}
\qquad
\tableauS{{\ }&{\  }&{\ }&{\ }\\  & {+ }&{+ }\\  & & {\ }}.
\end{gather*}
\end{example}

\begin{lemma}
\label{lemma:Aug10abc}
$\FA_{\lambda,\mu}^{\lambda} = \#\mathcal{E}_{\rho_{\ell(\lambda)}}(\mu)\times z^{|\mu|}$.
\end{lemma}
\begin{proof}
This follows from \cite[Theorem~3]{Ikeda.Naruse} which gives a formula for $\xi_{\lambda}(Z)|_{\mu}=L_{\lambda,\mu}^{\mu}$
 in terms of EYDs, combined with (\ref{eqn:FultonAndersonconnect}). We omit the details, which amount mostly to translating
 from the appropriate Weyl group elements to the associated partitions.
\end{proof}

By Lemma~\ref{lemma:Aug10abc}, 
\begin{equation}
\label{eqn:loc_eyd} 
{\mathfrak{d}}_{\lambda,\mu}^{\lambda}= \frac{\FA_{\lambda,\mu}^{\lambda}}{2^{L(\lambda;\lambda,\mu)-\Delta(\lambda;\lambda,\mu)}z^{\Delta(\lambda;\lambda,\mu)}}=
\frac{ \#\mathcal{E}_{\rho_{\ell(\lambda)}}(\mu) \times z^{|\mu|}}{ 2^{\ell(\mu)-|\mu|}z^{|\mu|}} = 
 \#\mathcal{E}_{\rho_{\ell(\lambda)}}(\mu)\times 2^{|\mu|-\ell(\mu)} .
\end{equation}

 As all boxes in $D_{(p)}$ are in distinct columns, they stay in distinct columns even after the application of the excitation moves (I) and (II). Therefore, $\mathcal{E}_{\rho_{\ell(\lambda)}}((p))$ contains at most $\binom{\ell(\lambda)}{p}$ elements, since, there are $\ell(\lambda)$ columns in $D_{\rho_{\ell(\lambda)}}$. It is not
 hard to see from (I) and (II) this upper bound is an equality. This proves the second equality of the theorem.

Let $N=\ell(\lambda)$; we now prove the first equality of the theorem statement by induction on $N+p$. When $N+p\leq 1$ the claim is obvious. When $N+p=2$, there is one case, namely, $\lambda=(1)$, $p=1$ and $d_{\lambda,(p)}^{\lambda}=1=\binom{1}{1}2^{1-1}$, as desired. Now suppose $N+p=k>2$ and the claim holds for smaller
$N+p$. 

Let $F(\lambda,(p);\lambda)$ be the tableaux enumerated by $d_{\lambda,(p)}^{\lambda}$. If $T\in F(\lambda,(p);\lambda)$ we say that a label $q$ \emph{appears in} row $r$ if $q$ is an edge label on the southern edge of the diagonal box in row $r$.
Let $\overline{T}$ be $T$ with the first row removed, this is of shape $\overline\lambda$. There are three disjoint cases that $T$ can
fall into:

\begin{enumerate}

\item ($1$ does not appear in row $1$ of $T$ and $\overline{T}\in F({\overline\lambda,(p);\overline\lambda})$:
Then there are $d_{\overline\lambda,(p)}^{\overline\lambda}$ many such choices; this equals  $\binom{N-1}{p}2^{p-1}$, by induction.

\item ($1$ does not appear in row $1$ of $T$ and $\overline{T}\not\in F({\overline\lambda,(p);\overline\lambda})$): Then it is
straightforward to check (from the assumption that $T\in F(\lambda,(p);\lambda)$) that ${\overline T}$ row rectifies to ${\overline S}$ of shape $(p-1)$ where the first row consists of 
box labels $1,3,4,\ldots, p-1$ and has a $2$ in the south edge of the first box. Notice that the choices for ${\overline T}$ are in bijection
with $F({\overline\lambda},(p-1);{\overline\lambda})$ where the map is to remove the edge label $2$ and shift the labels $3,4,5\ldots,p$ down by one. This, combined with induction asserts that there are $d_{\overline\lambda,(p-1)}^{\overline\lambda}=
\binom{N-1}{p-1}2^{p-2}$ many choices.

\item ($1$ appears in row $1$): No other label appears in row $1$ of $T$. Let $U$ be $\overline T$ with every entry decremented by one.
It is straightforward that $U\in 
F({\overline\lambda},(p-1);\overline\lambda)$, and that the map $T\mapsto U$ is bijective. Thus, there are
$d_{\overline\lambda,(p-1)}^{\overline\lambda}=\binom{N-1}{p-1}2^{p-2}$ many tableaux in this case, by induction. 
\end{enumerate}

By Pascal's identity, 
\[\binom{N}{p}2^{p-1} = \binom{N-1}{p}2^{p-1} + \binom{N-1}{p-1}2^{p-2} + \binom{N-1}{p-1}2^{p-2}.\]
This, combined with cases (1)-(3), completes the induction.
\end{proof}

\begin{theorem}\label{thm:locCoeff}
$d_{\rho_n,\rho_n}^{\rho_n}=2^{\binom{n}{2}}={\mathfrak d}_{\rho_n,\rho_n}^{\rho_n}$
\end{theorem}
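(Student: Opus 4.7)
The plan is to handle the two equalities separately and with quite different arguments.

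For ${\mathfrak d}_{\rho_n,\rho_n}^{\rho_n}=2^{\binom{n}{2}}$, the approach is to invoke (\ref{eqn:loc_eyd}) with $\lambda=\mu=\rho_n$, reducing the claim to $\#\mathcal{E}_{\rho_n}(\rho_n)=1$. This is easy: for every $x=(i,j)\in D_{\rho_n}$, the excitation target $x+(1,1)=(i+1,j+1)$ is either another box of $D_{\rho_n}$ (when $(i+1,j+1)$ lies in the staircase) or lies outside $D_{\rho_n}$ altogether; in either case it fails to belong to $D_{\rho_n}\setminus D_{\rho_n}=\emptyset$, so no elementary excitation of type (I) or (II) is legal and the only EYD is $D_{\rho_n}$ itself. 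Since $|\rho_n|-\ell(\rho_n)=\binom{n+1}{2}-n=\binom{n}{2}$, the identity falls out.

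The combinatorial equality $d_{\rho_n,\rho_n}^{\rho_n}=2^{\binom{n}{2}}$ is the substantive content. Since $\rho_n/\rho_n$ has no boxes but retains $n$ diagonal edges, each $T\in F(\rho_n,\rho_n;\rho_n)$ corresponds to a distribution of the labels $\{1,\ldots,\binom{n+1}{2}\}$ among these $n$ diagonal edges, subject to the very restrictive constraint that row rectification produce $S_{\rho_n}$. The plan is induction on $n$: the base $n=1$ is immediate (a single label on a single diagonal edge rectifies via one (J4') step to $S_{(1)}$). For the inductive step, analyze the first batch of slides, which by row rectification begin at the unique inner corner $(n,n)$ of $\rho_n$ and progressively absorb edge labels into the $n$th column of $\rho_n$ via (J4') at each diagonal box along the way. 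After this batch, the inner shape reduces to $\rho_{n-1}$, and the problem restricts (up to an order-preserving relabeling of the surviving edge labels) to enumerating $F(\rho_{n-1},\rho_{n-1};\rho_{n-1})$. The combinatorial heart will be to show that the labels destined for the $n-1$ non-diagonal boxes of column $n$ each admit a binary choice of source edge, contributing a factor of $2^{n-1}$; combined with the inductive hypothesis this gives $2^{n-1}\cdot 2^{\binom{n-1}{2}}=2^{\binom{n}{2}}$.

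The main obstacle will be the careful bookkeeping required to establish the independence of these binary choices across consecutive (J4') applications at adjacent diagonal boxes, and to verify that an arbitrary ``legal'' choice really does give rise to a tableau whose eventual rectification is $S_{\rho_n}$ (not merely one that starts off in the right way). An alternative route, mirroring the growth-diagram argument of Theorem~\ref{theorem:commutative}, may prove cleaner: construct the rectangular array of e-partitions with $S_{\rho_n}$ prescribed along the bottom row and the empty e-partition in the bottom-left corner, then enumerate completions of the top row (corresponding to the tableau $T$) consistent with (G1) and (G2). Counting valid growth diagrams reduces the problem to a purely local analysis at each $2\times 2$ square in which a diagonal edge is first populated, and each such square should admit exactly two legal completions — giving the factor $2^{\binom{n}{2}}$ directly, with $\binom{n}{2}$ counting the non-diagonal positions of $\rho_n$ where such a choice occurs.
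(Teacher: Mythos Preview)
Your treatment of the right equality matches the paper's: invoke (\ref{eqn:loc_eyd}) with $\lambda=\mu=\rho_n$, note that $\#\mathcal{E}_{\rho_n}(\rho_n)=1$ because no excitation move is available when $C=D_{\rho_n}$ already fills the ambient shape, and compute $|\rho_n|-\ell(\rho_n)=\binom{n}{2}$.

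For the left equality your inductive plan diverges from the paper and, as written, has a concrete gap. Row rectification (southmost inner corner at each step) does \emph{not} first clear column~$n$ of the inner shape: after the slide at $(n,n)$ the next southmost inner corner is $(n-1,n)$, then $(n-1,n-1)$, then the three boxes of row $n-2$ right to left, and so on row by row. At no intermediate stage is the inner shape $\rho_{n-1}$; it passes instead through $(n,n-1,\ldots,3)$, $(n,n-1,\ldots,4)$, etc. Even once column~$n$ of the outer shape is eventually populated, its labels continue to interact with later slides via the (J1)/(J2) comparisons at boxes $(i,n-1)$, so the residual problem is not an instance of $F(\rho_{n-1},\rho_{n-1};\rho_{n-1})$ after relabeling. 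Your proposed $2^{n-1}$ factor therefore has no clean combinatorial carrier in this picture, and the ``binary choice of source edge'' for the off-diagonal column-$n$ boxes is not well-defined as stated.

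The paper takes a different, non-inductive route. It fixes a canonical tableau $U_n\in{\sf shEqSYT}(\rho_n/\rho_n,N)$ (with $E_i(U_n)$ equal to the column-$i$ entries of $S_{\rho_n}$), introduces an ``$I$-slide'' operation ${\sf Sl}_I$ moving a chosen subset of edge labels from diagonal~$i$ to diagonal~$i+1$, and singles out certain \emph{$n$-slidable} subsets. The crux is a claim (stated, with proof deferred) that $T$ row-rectifies to $S_{\rho_n}$ if and only if $T={\sf Sl}_{I_{n-1}}\circ\cdots\circ{\sf Sl}_{I_1}(U_n)$ with each $I_i$ $n$-slidable. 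One then checks that the set from which $I_i$ is drawn has exactly $i$ elements, so the count is $\prod_{i=1}^{n-1}2^{i}=2^{\binom{n}{2}}$. This is much closer in spirit to your growth-diagram alternative than to your induction on~$n$: the binary choices are indexed by all $\binom{n}{2}$ off-diagonal boxes of the staircase simultaneously, not column by column, and a precise version of your ``two legal completions per square'' heuristic would presumably recover exactly the $I$-slide parametrization.
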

\noindent
\emph{Proof sketch:}
In the special case when $\ell(\lambda) = \ell(\mu)$, it is easy to observe that $|\mathcal{E}_{\mu}^{\lambda}| = 1$. Thus in this case,
\begin{equation}
\label{eqn:localization}
{\mathfrak d}_{\lambda,\mu}^{\lambda}= 2^{|\mu|-\ell(\mu)} 
\end{equation}

Further when $\mu=\lambda=\rho_n$, 
\[{\mathfrak d}_{\rho_n,\rho_n}^{\rho_n}= 2^{|\mu|-\ell(\mu)}=2^{\binom{n}{2}}.\]
This proves the rightmost equality.

For the remaining equality, consider $T\in {\sf shEqSYT}(\rho_n/\rho_n,N)$ where $N=|\rho_n|=\binom{n+1}{2}$.
For $1\leq i\leq n$, let \[E_i(T)=\{k \ | \ k \mbox{ lies on the } i \mbox{th diagonal edge of } T\}.\]

For $T\in {\sf shSYT}(\nu/\lambda)$, let $T(i,j)$ be the entry in box $(i,j)$ (in matrix coordinates). 
Define $U_n\in{\sf shEqSYT}(\rho_n/\rho_n,N)$ by the requirement that
$E_i(U_n)=\bigcup_{r=1}^i S_{\rho_n}(r,i)$. That is, the labels on the $i$th diagonal edge of $U_n$
are precisely the labels appearing in column $i$ of $S_{\rho_n}$.

For $T\in {\sf shEqSYT}(\rho_n/\rho_n,N)$ and $I \subseteq E_i(T)$ for some $i\in[n-1]$, define the \emph{$I$-slide} of $T$, 
${\sf Sl}_{I}(T)\in{\sf shEqSYT}(\rho_n/\rho_n,N)$, by 
  \[E_k({\sf Sl}_{I}(T)):=\begin{cases}
	E_k(T) &\text{if } k\in [n]\setminus\{i,i+1\},\\
    E_k(T)\setminus {I} &\text{if } k=i,\\
	E_k(T)\cup {I} &\mbox{if } k=i+1. 
	\end{cases}\]

\begin{example}
Let $n=4$. Taking $I=\{6\}\subseteq E_3(U_4)=\{3,6,8\}$, below we illustrate ${\sf Sl}_{\{6\}}(U_4)$.
\[\begin{picture}(450,90)
\put(0,65){$S_{\rho_4}=\tableauL{ 1 & 2 & 3 & 4 \\ & 5  & 6 & 7 &\\ &  &  8 &9 \\ & &  & 10 }$
}
\put(150,65){$U_4=\tableauL{ \ & \ & \ & \ \\ & \  & \ & \ \ &\\ &  &  \ &\ \\ & &  & \ }$}
\put(185,63){$1$}
\put(200,43){$2 5$}
\put(217,23){$3 6 8$}
\put(233,03){$4 7 9 10$}

\put(300,65){${\sf Sl}_{\{6\}}(U_4)=\tableauL{ \ & \ & \ & \ \\ & \  & \ & \ \ &\\ &  &  \ &\ \\ & &  & \ }$}
\put(365,63){$1$}
\put(383,43){$2 5$}
\put(400,23){$3 8$}
\put(408,03){$4 6 7 9 10$}
\end{picture}\]      
\end{example}

For $T\in {\sf shSYT}(\nu/\lambda)$, let \[{\sf row}_k(T)=\{\text{entries in row } k \text{ of } T \}.\]
Say $I\subseteq E_{i}(T)$ is \emph{$n$-slidable} if $1\leq i<n$,
\begin{equation}
\label{eqn:n-slidable}
I\subseteq \bigcup_{k=1}^i\{\min (E_{i}(T)\cap {\sf row}_k(S_{\rho_n}))\},
\end{equation}
and for $i< k\leq n$, $E_k(T)=E_k(U_n)$.
\begin{example} Consider $T$ below and $i=3$. Then to the right we have $S_{\rho_4}$ with 
\[\bigcup_{k=1}^3\{\min (E_{3}(T)\cap {\sf row}_k(S_{\rho_4}))\}=\bigcup_{k=1}^3\{\min (\{1,3,5,6,8\}\cap {\sf row}_k(S_{\rho_4}))\}=\{1,5,8\}\] shaded yellow and the remainder of entries of $E_{3}(T)$ shaded gray. Thus any $I\subseteq \{1,5,8\}$ is $4$-slidable, so $\{1,8\}$ is $4$-slidable but $\{1,3,8\}$ is not.
        \[\begin{picture}(300,90)
\put(0,65){$T=\tableauL{ \ & \ & \ & \ \\ & \  & \ & \ \ &\\ &  &  \ &\ \\ & &  & \ }$}
\put(47,43){$2$}
\put(56,23){$1 3 5 6 8$}
\put(80,03){$4 7 9 10$}
\put(165,65){$\ytableausetup
{boxsize=1.6em}\begin{ytableau}
 *(yellow)1 & 2  & *(lightgray)3 & 4  \\
\none  & *(yellow)5 & *(lightgray)6 & 7\\
\none  & \none & *(yellow)8 & 9\\
 \none & \none & \none & 10 
\end{ytableau}$}
\end{picture}\]   
\end{example}

The proof of this claim is lengthy and will appear elsewhere:
\begin{claim}\label{lemma:scIFF}
Fix $T\in {\sf shEqSYT}(\rho_n/\rho_n,N)$. Then 
${\sf shEqRect}(T)=S_{\rho_n}$ if and only if $T={\sf Sl}_{I_{n-1}}\circ{\sf Sl}_{I_{n-2}}\circ\ldots\circ{\sf Sl}_{I_1}(U_n)$ where each $I_i\subseteq E_{i}({\sf Sl}_{I_{i-1}}\circ\ldots\circ{\sf Sl}_{I_1}(U_n))$ is $n$-slidable.
\end{claim}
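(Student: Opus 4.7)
The plan is to establish both directions of the if-and-only-if by combining a direct verification for $U_n$, a stability lemma for $n$-slidable slides, and a reverse-peeling argument, with a count to match $d_{\rho_n,\rho_n}^{\rho_n}=2^{\binom{n}{2}}$ as already established.

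First I would verify directly that ${\sf shEqRect}(U_n)=S_{\rho_n}$, by induction on $n$. Since $\rho_n$ has a unique inner corner at $(n,n)$, the row rectification must start there; the labels placed on edge $n$ of $U_n$ are exactly the column-$n$ entries of $S_{\rho_n}$, and one checks they cascade into column $n$ of the final tableau via successive (J4')-then-(J2) style moves: each (J4') at a diagonal box ejects the minimum edge label into that box while $\bullet$ continues along the column. After column $n$ is set, the remaining configuration restricts to a copy of $U_{n-1}$ inside $\rho_{n-1}$, closing the induction.

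For the forward direction ($\Leftarrow$) I would prove a \emph{stability lemma}: if $T$ rectifies to $S_{\rho_n}$ and $I\subseteq E_i(T)$ is $n$-slidable, then so does ${\sf Sl}_I(T)$. Reducing to $|I|=1$ via independence of disjoint slides, I would track a single label $\ell=\min(E_i(T)\cap{\sf row}_k(S_{\rho_n}))$ promoted from edge $i$ to edge $i+1$; the $n$-slidability forces the rectification of the diagonal containing $\ell$ to be off by exactly one (J4') step, which is then re-absorbed at the next diagonal without changing the terminal tableau. For the converse ($\Rightarrow$), given $T$ with ${\sf shEqRect}(T)=S_{\rho_n}$, one peels off slides in the reverse order $i=n-1, n-2, \ldots, 1$: each time isolating the ``excess'' labels on edge $i$ (those not present at that edge in the partially-slid $U_n$), verifying they form an $n$-slidable set by checking the ``minimum in row $k$'' condition \eqref{eqn:n-slidable}, and sliding them back down to edge $i-1$ to obtain an intermediate tableau still rectifying to $S_{\rho_n}$ by (the already-proved) forward direction. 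Termination at $U_n$ follows because each peel strictly decreases the monovariant $\sum_i i\cdot|E_i(T)\setminus E_i(U_n)|$.

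As a consistency check, at step $i$ the $n$-slidable sets are all $2^i$ subsets of an $i$-element family (one ``minimum in row $k$'' representative for each $k\leq i$), so the total number of slide sequences is $\prod_{i=1}^{n-1}2^i = 2^{\binom{n}{2}}$, matching $d_{\rho_n,\rho_n}^{\rho_n}$; this ensures no sequence is double-counted and gives an independent verification of injectivity. The hardest step will be the stability lemma in the forward direction: the shifted edge-label rules (J3') and (J4') are delicate around the diagonal, and the precise invariant preserved under $n$-slidable slides (as opposed to arbitrary edge reassignments, which would destroy rectification to $S_{\rho_n}$) requires a careful local analysis of how successive edge labels migrate through adjacent diagonal boxes during the row-order rectification.
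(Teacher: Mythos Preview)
The paper does not actually prove this claim; it explicitly says ``The proof of this claim is lengthy and will appear elsewhere.'' So there is no in-paper argument to compare your proposal against, and it has to stand on its own.

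Your overall architecture (verify $U_n$ rectifies correctly, a stability lemma for the $\Leftarrow$ direction, a peeling procedure for $\Rightarrow$) is sensible, but there is a circularity you must remove. You treat $d_{\rho_n,\rho_n}^{\rho_n}=2^{\binom{n}{2}}$ as ``already established,'' but in the paper that equality is proved \emph{using} Claim~\ref{lemma:scIFF}; the independently established fact is ${\mathfrak d}_{\rho_n,\rho_n}^{\rho_n}=2^{\binom{n}{2}}$, and $d={\mathfrak d}$ is only Conjecture~\ref{conj:C=D}. So you cannot lean on the count either to certify injectivity of the sequence-to-tableau map or to shortcut the $\Rightarrow$ direction. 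Injectivity is, however, easy to see directly without any count: from $T$ one recovers $I_1=E_1(U_n)\setminus E_1(T)$, then $I_2=(E_2(U_n)\cup I_1)\setminus E_2(T)$, and so on. So this part is reparable, but drop the counting appeal.

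Your peeling argument also has an indexing slip: to undo ${\sf Sl}_{I_{n-1}}$ you must look at the excess on edge $n$, not edge $n-1$, and move it back to edge $n-1$. More seriously, the heart of the $\Rightarrow$ direction is proving that the excess set you extract at each stage actually satisfies the ``minimum in row $k$'' condition~(\ref{eqn:n-slidable}). This cannot be deduced from the stability lemma, which runs the other way; it requires engaging directly with the hypothesis ${\sf shEqRect}(T)=S_{\rho_n}$ and tracking how each label reaches its target box under row rectification. Your monovariant guarantees termination of the peeling, but not $n$-slidability of the sets you peel. That verification is almost certainly where the ``lengthy'' omitted argument does its real work, and your sketch does not yet supply it.
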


By Claim \ref{lemma:scIFF}, $d_{\rho_n,\rho_n}^{\rho_n}$ equals the number of sequences $\{I_i\}_{i=1}^{n-1}$ where 
\[I_i\subseteq E_{i}({\sf Sl}_{I_{i-1}}\circ\ldots\circ{\sf Sl}_{I_1}(U_n))\]
is $n$-slidable. 
We assert that \[i=\#\bigcup_{k=1}^i\{\min (E_{i}({\sf Sl}_{I_{i-1}}\circ\ldots\circ{\sf Sl}_{I_1}(U_n))\cap {\sf row}_k(S_{\rho_n}))\}.\]
Indeed, to see this, note that $i=\#\bigcup_{k=1}^i \min(E_i(U_n)\cap {\sf row}_k(S_{\rho_n}))\}$ and, by definition of $I$-slidable,
$E_i({\sf Sl}_{I_{i-1}}\circ\ldots\circ{\sf Sl}_{I_1}(U_n))\supseteq E_i(U_n)$. Hence, 
by (\ref{eqn:n-slidable}),
there are $2^i$ choices for each $n$-slidable $I_i$, 
so $d_{\rho_n,\rho_n}^{\rho_n}=2^{\binom{n}{2}}$, as desired.
\qed

We illustrate Claim \ref{lemma:scIFF} with the following example:

\begin{example}\label{ex:slideIllustration} Below is $T={\sf Sl}_{I_{3}}\circ{\sf Sl}_{I_{2}}\circ{\sf Sl}_{I_1}(U_4)$ with the choices of $I_i$ given above each arrow. Beneath each arrow, entries in 
$\bigcup_{k=1}^i\{\min(E_{i}({\sf Sl}_{I_{i-1}}\circ\ldots\circ{\sf Sl}_{I_1}(U_4))\cap {\sf row}_k(S_{\rho_n}))\}$ are shaded yellow in $S_{\rho_4}$ and the remaining entries of $E_i(T)$ are shaded gray. Thus $I_i$ is $4$-slidable if and only if all entries of $I_i$ are yellow. Thus in the example below, $I_1,I_2,$ and $I_3$ are all $4$-slidable. Therefore by Lemma \ref{lemma:scIFF}, ${\sf shEqRect}(T)=S_{\rho_4}$. 
    	\[\begin{picture}(500,120)
\put(0,75){$U_4=$}
\put(10,95){$\tableauL{ \ & \ & \ & \ \\ & \  & \ & \ \ &\\ &  &  \ &\ \\ & &  & \ }$}
\put(15,93){$1$}
\put(32,73){$2 5$}
\put(49,53){$3 6 8$}
\put(62,33){$4 7 9 10$}
\put(95,75){$\xrightarrow{I_1=\emptyset}$}
\put(100,35){$\ytableausetup
{boxsize=0.8em}\begin{ytableau}
*(yellow)1  & 2  & 3 & 4  \\
\none  & 5 & 6 & 7\\
\none  & \none & 8 & 9\\
 \none & \none & \none & 10 
\end{ytableau}$}
\put(125,95){$\tableauL{ \ & \ & \ & \ \\ & \  & \ & \ \ &\\ &  &  \ &\ \\ & &  & \ }$}
\put(130,93){$1$}
\put(147,73){$2 5$}
\put(165,53){$3 6 8$}
\put(175,33){$4 7 9 10$}
\put(210,75){$\xrightarrow{I_2=\{5\}}$}
\put(215,35){$\begin{ytableau}
1  & *(yellow)2  & 3 & 4  \\
\none  & *(yellow)5 & 6 & 7\\
\none  & \none & 8 & 9\\
 \none & \none & \none & 10 
\end{ytableau}$}
\put(255,95){$\tableauL{ \ & \ & \ & \ \\ & \  & \ & \ \ &\\ &  &  \ &\ \\ & &  & \ }$}
\put(260,93){$1$}
\put(280,73){$2$}
\put(288,53){$3 5 6 8$}
\put(303,33){$4 7 9 10$}
\put(335,75){$\xrightarrow{I_3=\{3\}}$ \  $T=$}
\put(345,35){$\begin{ytableau}
1  & 2  & *(yellow)3 & 4  \\
\none  & *(yellow)5 & *(lightgray)6 & 7\\
\none  & \none & *(yellow)8 & 9\\
 \none & \none & \none & 10 
\end{ytableau}$}
\put(390,95){$\tableauL{ \ & \ & \ & \ \\ & \  & \ & \ \ &\\ &  &  \ &\ \\ & &  & \ }$}
\put(395,93){$1$}
\put(415,73){$2$}
\put(428,53){$5 6 8$}
\put(436,33){$3 4 7 9 10$}
\end{picture}\]    

However, in the example below, $I'_1,I'_2$ are $4$-slidable, but $I'_3$ is not. Thus by Claim \ref{lemma:scIFF}, ${\sf shEqRect}(T')\neq S_{\rho_4}$.
 	\[\begin{picture}(500,120)
\put(0,75){$U_4=$}
\put(10,95){$\tableauL{ \ & \ & \ & \ \\ & \  & \ & \ \ &\\ &  &  \ &\ \\ & &  & \ }$}
\put(15,93){$1$}
\put(32,73){$2 5$}
\put(47,53){$3 6 8$}
\put(62,33){$4 7 9 10$}
\put(95,75){$\xrightarrow{I'_1=\emptyset}$}
\put(100,35){$\ytableausetup
{boxsize=0.8em}\begin{ytableau}
*(yellow)1  & 2  & 3 & 4  \\
\none  & 5 & 6 & 7\\
\none  & \none & 8 & 9\\
 \none & \none & \none & 10 
\end{ytableau}$}
\put(125,95){$\tableauL{ \ & \ & \ & \ \\ & \  & \ & \ \ &\\ &  &  \ &\ \\ & &  & \ }$}
\put(130,93){$1$}
\put(147,73){$2 5$}
\put(162,53){$3 6 8$}
\put(175,33){$4 7 9 10$}
\put(210,75){$\xrightarrow{I'_2=\{2,5\}}$}
\put(215,35){$\begin{ytableau}
1  & *(yellow)2  & 3 & 4  \\
\none  & *(yellow)5 & 6 & 7\\
\none  & \none & 8 & 9\\
 \none & \none & \none & 10 
\end{ytableau}$}
\put(255,95){$\tableauL{ \ & \ & \ & \ \\ & \  & \ & \ \ &\\ &  &  \ &\ \\ & &  & \ }$}
\put(260,93){$1$}
\put(285,53){$2 3 5 6 8$}
\put(303,33){$4 7 9 10$}
\put(335,75){$\xrightarrow{I'_3=\{3\}}$ \  $T'=$}
\put(345,35){$\begin{ytableau}
1  & *(yellow)2  & *(lightgray)3 & 4  \\
\none  & *(yellow)5 & *(lightgray)6 & 7\\
\none  & \none & *(yellow)8 & 9\\
 \none & \none & \none & 10 
\end{ytableau}$}
\put(390,95){$\tableauL{ \ & \ & \ & \ \\ & \  & \ & \ \ &\\ &  &  \ &\ \\ & &  & \ }$}
\put(395,93){$1$}
\put(425,53){$2 5 6 8$}
\put(436,33){$3 4 7 9 10$}
\end{picture}\]      
\end{example}

While $2^{\binom{n}{2}}$ is the number of labeled graphs on $n$ vertices, consulting
the Online Encyclopedia of Integer Sequences \cite{oeis}, one also finds that it counts the number of
\begin{itemize}
\item perfect matchings of order $n$ Aztec diamond \cite{Speyer},
\item Gelfand-Zeitlin patterns with bottom row $[1,2,3,...,n]$ \cite{Zeilberger}, and
\item certain domino tilings \cite[A006125]{oeis}
\end{itemize}
among other things.

We end with a problem of enumerative combinatorics:

\begin{problem}
Give bijections between the shifted edge labeled tableaux counted by $d_{\rho_n,\rho_n}^{\rho_n}$
and the equinumerous objects above.
\end{problem}

\section*{Acknowledgements}
AY's thanks to Bill Fulton goes back to 1999. In May 2018, the authors attended a conference on Schubert calculus held at Ohio State University where
Bill kindly shared the fine points of his ongoing work with David Anderson. This was the stimulus for this chapter. We also thank David Anderson, Soojin Cho, Sergey Fomin, Allen Knutson, Gidon Orelowitz, John Stembridge, Hugh Thomas and Brian Shin for helpful remarks. We thank Anshul Adve, David Anderson, Cara Monical, Oliver Pechenik, Ed Richmond, and Hugh Thomas for their contributions reported here. AY was partially supported by an NSF grant, a UIUC Campus research board grant, and a Simons Collaboration Grant. This material is based upon work of CR supported by the National Science Foundation Graduate Research Fellowship Program under Grant No. DGE -- 1746047.

\end{document}